%%%%%%%%%%%%%%%%%%%%%%%%
\documentclass[12pt]{amsart}
\usepackage{a4wide,amsfonts,graphicx,
amssymb,stmaryrd,amscd,amsmath,latexsym,amsbsy,xypic}
\xyoption{all}
\usepackage{marginnote}

\numberwithin{equation}{section}
\theoremstyle{plain}

\newtheorem{thm}{Theorem}[section]

\newtheorem{prop}[thm]{Proposition}

\newtheorem{defi}[thm]{Definition}
\newtheorem{lem}[thm]{Lemma}
\newtheorem{cor}[thm]{Corollary}

\theoremstyle{remark}
\newtheorem{rema}[thm]{Remark}

\newcommand{\Z}{\mathbb{Z}}

\newcommand{\C}{\mathbb{C}}

\title[]{On connection matrices of quantum Knizhnik-Zamolodchikov equations based on Lie super algebras}
\author{Wellington Galleas}
\author{Jasper V. Stokman}
\address{II. Institut f\"ur Theoretische Physik, Universit\"at Hamburg, Luruper Chaussee 149, 22761 Hamburg, Germany.}
\email{wellington.galleas@desy.de}
\thanks{The work of W.G. is supported by the German Science Foundation (DFG) under the Collaborative Research Center (SFB) 676: Particles, Strings and the Early Universe.}

\address{KdV Institute for Mathematics, University of Amsterdam,
Science Park 904, 1098 XH Amsterdam, The Netherlands.}
\email{j.v.stokman@uva.nl}
\subjclass[2000]{}
\begin{document}

\vspace{2em}
\keywords{}
%%%%%%%%%%%%%%%%%%%%%%%%%%%%%%%
\begin{abstract}
We propose a new method to compute connection matrices of quantum Knizhnik-Zamolodchikov equations associated to integrable
vertex models with super algebra and Hecke algebra symmetries. The scheme relies on decomposing the underlying spin representation of
the affine Hecke algebra in principal series modules and invoking the known solution of the connection problem for quantum affine Knizhnik-Zamolodchikov
equations associated to principal series modules. We apply the method to the spin representation underlying the 
$\mathcal{U}_q\bigl(\widehat{\mathfrak{gl}}(2|1)\bigr)$ Perk-Schultz model. We show that the corresponding connection matrices are described by an
elliptic solution of 
%a supersymmetric version of 
the dynamical quantum Yang-Baxter equation with spectral parameter. 
\end{abstract}
%%%%%%%%%%%%%%%%%%%%%%%%%%%%%%%
\maketitle
\begin{center}
{\it Dedicated to Masatoshi Noumi on the occasion of his 60th birthday}
\end{center}

\vspace{-10cm}
\begin{flushright}
{\footnotesize ZMP-HH/15-24}
\end{flushright}
\vspace{10cm}

\setcounter{tocdepth}{2}

%%%%%%%%%%%%%%%%%%%%%%%%%%%%%%%%%%%%%%%%%%
\section{Introduction}
%%%%%%%%%%%%%%%%%%%%%%%%%%%%%%%%%%%%%%%%%%%

\subsection{(Quantum) Knizhnik-Zamolodchikov equations}
Knizhnik-Zamolodchikov (KZ) equations were introduced in \cite{KZ} as a system of holonomic differential equations
satisfied by $n$-point correlation functions of primary fields in the Wess-Zumino-Novikov-Witten field theory \cite{WZ, N1, N2, W1, W2}.
Although they were introduced within a physical context, it has since proved to play an important role in several branches
of mathematics. One of the reasons for that lies in the fact that KZ equations exhibit strong
connections with the representation theory of affine Lie algebras. For instance, they are not restricted to Wess-Zumino-Novikov-Witten
theory and they can be used to describe correlation functions of general conformal field theories \cite{BPZ} associated with affine Lie 
algebras. Within the context of representation theory, correlation functions are encoded as matrix coefficients of intertwining
operators between certain representations of affine Lie algebras. This formulation is then responsible for associating important representation
theoretic information to the structure of the particular conformal field theory. Moreover, one remarkable feature of KZ equations 
from the representation theory point of view is related to properties of the monodromies (or connection matrices) of its solutions along closed paths. The 
latter was shown in \cite{K} to produce intertwining operators for quantum group tensor product representations.

The interplay between KZ equations and affine Lie algebras also paved the way for the derivation of a quantised version of
such equations having the representation theory of quantum affine algebras as its building block. In that case one finds 
a holonomic system of difference equations satisfied by matrix coefficients of a product of intertwining operators \cite{FR}.
The latter equations are known as quantum Knizhnik-Zamolodchikov equations, or qKZ equations for short. 

The fundamental ingredient for defining a qKZ equation is a solution of the quantum Yang-Baxter equation with spectral parameter, also referred to as a $R$-matrix.
Several methods have been developed along the years to find solutions of the Yang-Baxter equation; and among prominent examples
we have the \emph{Quantum Group} framework \cite{Ji1, Ji2, Ji3, D} and the \emph{Baxterization} method \cite{Jo}. These methods are not completely 
unrelated and solutions having $\mathcal{U}_q(\widehat{\mathfrak{gl}}(m|n))$ symmetry \cite{DGLZ} are known to be also obtained from
Baxterization of Hecke algebras \cite{CK, DA}. The particular cases $\mathcal{U}_q(\widehat{\mathfrak{gl}}(2))$ and 
$\mathcal{U}_q(\widehat{\mathfrak{gl}}(1|1))$ are in their turn obtained from the Baxterization of a quotient of the Hecke
algebra known as Temperley-Lieb algebra \cite{TL}. Other quantised Lie super algebras have also been considered within this program. 
Solutions based on the $\mathcal{U}_q({\widehat{\mathfrak{gl}}}^{(2)}(m|n))$, $\mathcal{U}_q(\widehat{\mathfrak{osp}}(m|n))$ and
$\mathcal{U}_q({\widehat{\mathfrak{osp}}}^{(2)}(m|n))$ have been presented in \cite{BS, GM1, GM2}. The latter cases also originate
from the Baxterization of Birman-Wenzl-Murakami algebras \cite{BW, Mu, GP, Gr1, Gr2}, as shown in \cite{GM2}. 

\subsection{Relation to integrable vertex models}
The quantum inverse scattering method attaches an integrable two-dimensional vertex model to an $R$-matrix.
A well known example is the six-vertex model, which is governed by the $R$-matrix obtained as the intertwiner $U(z_1)\otimes U(z_2)\rightarrow U(z_2)\otimes U(z_1)$ of 
$\mathcal{U}_q(\widehat{\mathfrak{sl}}(2))$-modules with $U(z)$ the $\mathcal{U}_q(\widehat{\mathfrak{sl}}(2))$ evaluation representation associated to the 
two-dimensional vector representation $U$ of $\mathcal{U}_q(\mathfrak{sl}(2))$. The 
qKZ equations associated to this $R$-matrix are solved by quantum correlation functions of the six-vertex model \cite{JM}. We will sometimes 
say that the qKZ equation is associated to the integrable vertex model governed by the $R$-matrix, instead of being associated to the $R$-matrix itself. 

A large literature has been devoted to the study of integrable systems based on
the Lie super algebra $\widehat{\mathfrak{gl}}(m|n)$, see for instance \cite{BB, BBO, dVL, Suz, Sa, EFS}.
The supersymmetric \textsc{t-j} model is one of the main examples. The associated $R$-matrix arises as
intertwiner of the Yangian algebra $\mathcal{Y}(\widehat{\mathfrak{gl}}(2|1))$. Another example is the $q$-deformed supersymmetric
\textsc{t-j} model \cite{Suz, Bar} whose $R$-matrix was firstly obtained by Perk and Schultz \cite{PS}. The relation between the Perk-Schultz
model and the $\mathcal{U}_q(\widehat{\mathfrak{gl}}(2|1))$ invariant $R$-matrix was clarified in \cite{Suz, MR, Bar}.

\subsection{Connection problems}
A basis of solutions of the qKZ equations can be constructed such that the solutions have asymptotically free behaviour deep in a particular
asymptotic sector $\mathcal{S}$.
The connection problem is the problem to explicitly compute the change of basis matrix between basis associated to different asymptotic sectors. 
The basis change matrix is then called a connection matrix. 

The connection problem for qKZ equations has been solved in special cases. Frenkel and Reshetikhin \cite{FR} solved it
for the qKZ equations attached to the six-vertex model.
Konno \cite{Ko} computed for a simple classical Lie algebra $\mathfrak{g}$ the connection matrices for the qKZ equations attached to 
the $\mathcal{U}_q(\widehat{\mathfrak{g}})$-intertwiner $U(z_1)\otimes U(z_2)\rightarrow U(z_2)\otimes U(z_1)$ with 
$U$ the vector representation of $\mathcal{U}_q(\mathfrak{g})$.
In both cases the computation of the connection matrices relies on explicitly solving the two-variable qKZ equation in terms of basic hypergeometric
series.

\subsection{The goals of the paper}
The aim of this paper is two-fold. Firstly we present a new approach to compute connection matrices of qKZ equations associated to intertwiners $R^W(z_1/z_2): W(z_1)\otimes W(z_2)\rightarrow
W(z_2)\otimes W(z_1)$ when the associated tensor product representation
$W(z_1)\otimes\cdots\otimes W(z_n)$ of evaluation modules, viewed as module over the {\it finite} quantum (super)group, becomes a Hecke algebra module by the action of the universal $R$-matrix on neighbouring tensor legs \cite{Ji1,Ji2}. Adding a quasi-cyclic operator, which physically is imposing quasi-periodic boundary conditions,
$W^{\otimes n}$ becomes a module over the affine Hecke algebra 
of type $A_{n-1}$, which we call the spin representation. The spin representation thus is governed by a constant $R$-matrix, which is the braid limit of the 
$R$-matrix $R^W(z)$ underlying the qKZ equations we started with. In this setup the qKZ equations coincide with Cherednik's \cite{C} quantum affine KZ equations associated to 
the spin representation.

The new approach 
is based on the solution of the connection problem of quantum affine KZ equations for principal series modules of the affine Hecke algebra, see \cite[\S 3]{S1} and the appendix
of the present paper. To compute the connection matrices of the qKZ equations associated to $R^W(z)$ it then suffices to decompose, 
if possible, the spin representation as direct sum of principal series modules and construct the connection matrices by glueing together the explicit connection matrices associated to the principal series blocks in the decomposition.

Secondly, we apply the aforementioned approach to compute the connection matrices for qKZ equations attached to the 
$\mathcal{U}_q(\widehat{\mathfrak{gl}}(2|1))$ Perk-Schultz model.
We show that they are governed by an explicit elliptic solution of 
%a supersymmetric version of 
the \emph{dynamical quantum Yang-Baxter equation}.
The latter equation was proposed by Felder \cite{F1} as the quantised version of a modified classical Yang-Baxter equation arising as the compatibility
condition of the Knizhnik-Zamolodchikov-Bernard equations \cite{B1, B2}. 

\subsection{Relation to elliptic face models}
Felder \cite{F1, F2} showed that solutions of the dynamical quantum
Yang-Baxter equation encodes statistical weights of face models. For instance, the solution of the dynamical quantum Yang-Baxter equation
arising from the connection matrices for the qKZ equations associated to the six-vertex model encodes the statistical weights of Baxter's \cite{Ba} eight-vertex face model \cite{FR,S1}.
More generally, for a simple Lie algebra $\mathfrak{g}$ of classical type $X_n$ and $U$
the vector representation of $\mathcal{U}_q(\mathfrak{g})$, Konno \cite{Ko} has shown that the connection matrices of the qKZ equations associated to the 
$\mathcal{U}(\widehat{\mathfrak{g}})$-intertwiner 
$U(z_1)\otimes U(z_2)\rightarrow U(z_2)\otimes U(z_1)$ 
are described by the statistical weights of the $X_n^{(1)}$ elliptic face models of Jimbo, Miwa and Okado \cite{JMO1,JMO2,JMO3}.

We expect that our elliptic solution of the 
%supersymmetric 
dynamical quantum Yang-Baxter equation, obtained from the connection matrices for the qKZ equations associated
to the $\mathcal{U}_q(\widehat{\mathfrak{gl}}(2|1))$ Perk-Schultz model, is closely related
to Okado's \cite{O} elliptic face model attached to $\mathfrak{gl}(2|1)$. 

\subsection{Future directions}
 It is natural to apply our techniques to compute connection matrices when the $R$-matrix is the $\mathcal{U}_q(\widehat{\mathfrak{gl}}(m|n))$-intertwiner $U(z_1)\otimes U(z_2)\rightarrow U(z_2)\otimes U(z_1)$ with $U$ the vector representation of the quantum super algebra $\mathcal{U}_q(\mathfrak{gl}(m|n))$, and to relate
 the connection matrices to Okado's \cite{O} elliptic face models attached to $\mathfrak{gl}(m|n)$. Another 
 natural open problem is the existence of a \emph{face-vertex} transformation \cite{Ba} turning our dynamical elliptic $R$-matrix
into an elliptic solution of the (non dynamical) quantum Yang-Baxter equation with spectral parameter. If such transformation exists it is natural to
expect that the resulting $R$-matrix will be an elliptic deformation of the $R$-matrix underlying the $\mathcal{U}_q(\widehat{\mathfrak{gl}}(2|1))$
Perk-Schultz model. Indeed, for $\mathfrak{gl}(2)$ it is well known that the connection matrices of the qKZ equations attached to the six-vertex 
model is governed by the elliptic solution of the dynamical quantum Yang-Baxter equation underlying Baxter's eight-vertex face model \cite{FR,S1}. 
By a face-vertex transformation, this dynamical $R$-matrix turns into the quantum $R$-matrix underlying Baxter's symmetric eight-vertex model, which 
can be regarded as the elliptic analogue of the six-vertex model.

We plan to return to these open problems in a future publication.

\vspace{1em}
\paragraph{{\bf Outline.}} This paper is organised as follows. In Section \ref{Sec2} we give the explicit elliptic solution of 
the dynamical quantum Yang-Baxter equation attached to the Lie super algebra $\mathfrak{gl}(2|1)$. 
In Section \ref{AHAsection} we discuss the relevant representation theory of the affine Hecke algebra. In \ref{Sec4} we present our new approach
to compute connection matrices of quantum affine KZ equations attached to spin representations.
In Section \ref{Sec5} we describe the spin representation associated to the $\mathcal{U}_q(\widehat{\mathfrak{gl}}(2|1))$ Perk-Schultz model and
decompose it as direct sum of principal series modules. The connection matrices of the quantum affine KZ equations associated to this spin representation is computed in 
Section \ref{Sec6}. In this section we also relate the connection matrices to the elliptic solution of the 
%supersymmetric 
dynamical quantum Yang-Baxter equation from Section \ref{Sec2}.
In Section 6 we need to have the explicit solution of the connection problem of quantum affine KZ equations associated to an arbitrary principal series module, while 
\cite[\S 3]{S1} only deals with a special class of principal series modules. We discuss the extension of the results from \cite[\S 3]{S1} to all principal series modules
in the appendix.

\vspace{1em}
\paragraph{{\it Acknowledgements.}} We thank Giovanni Felder and Huafeng Zhang for valuable comments and discussions.

%%%%%%%%%%%%%%%%%%%%%%%%%%%%%%%%%%%%%%%%%%%%%%%
\section{The elliptic solution of the dynamical quantum Yang-Baxter equation}\label{Sec2}

This paper explains how to obtain new elliptic dynamical $R$-matrices by solving connection problems for qKZ equations. 
The starting point is a constant $R$-matrix satisfying a Hecke relation.
We will describe a method
to explicitly compute the connection matrices of the qKZ equations associated to the Baxterization of the constant $R$-matrix.
In pertinent cases we show that these connection matrices are governed by explicit elliptic dynamical $R$-matrices.

We shall explain the technique in more detail from Section \ref{AHAsection} onwards. In this section we present the explicit elliptic dynamical $R$-matrix one obtains by applying this method to the spin representation of the affine Hecke algebra arising from the action of the universal $R$-matrix of the quantum group $\mathcal{U}_q(\mathfrak{gl}(2|1))$ on $V\otimes V$, with $V$ the ($3$-dimensional) vector representation of  $\mathcal{U}_q(\mathfrak{gl}(2|1))$.
%%%%%%%%%%%%%%%%%%%%%%%%%%%%%%%%%%%%%%%%%%%%%%%%%%%%%%%%%%%%%%%%%
\subsection{The Lie super algebra $\mathfrak{gl}(2|1)$}
%%%%%%%%%%%%%%%%%%%%%%%%%%%%%%%%%%%%%%%%%%%%%%%%%%%%%%%%%%%%%%%%%
Let $V = V_{\overline{0}} \oplus V_{\overline{1}}$ be a $\Z/2\Z$-graded vector space with even (bosonic) subspace $V_{\overline{0}}=\mathbb{C}v_1\oplus\mathbb{C}v_2$ and 
odd (fermionic) subspace $V_{\overline{1}}=\mathbb{C}v_3$. Let $p: \{1,2,3\}\rightarrow \Z/2\Z$ be the parity map
\begin{equation}\label{pmap}
p(i):=
\begin{cases}
\overline{0}\quad &\hbox{ if }\,\, i\in\{1,2\},\\
\overline{1}\quad &\hbox{ if }\,\, i=3,
\end{cases}
\end{equation}
so that $v_i\in V_{p(i)}$ for $i=1,2,3$.

Let $\mathfrak{gl}(V)$ be the associated Lie super algebra, with $\Z/2\Z$-grading given by
\begin{equation*}
\begin{split}
\mathfrak{gl}(V)_{\overline{0}}&=\{A\in\mathfrak{gl}(V) \,\, | \,\, A(V_{\overline{0}})\subseteq V_{\underline{0}}\,\, \& \,\, A(V_{\overline{1}})\subseteq V_{\overline{1}} \},\\
\mathfrak{gl}(V)_{\overline{1}}&=\{A\in\mathfrak{gl}(V) \,\, | \,\, A(V_{\overline{0}})\subseteq V_{\underline{1}}\,\, \& \,\, A(V_{\overline{1}})\subseteq V_{\overline{0}} \}
\end{split}
\end{equation*}
and with Lie super bracket $\lbrack X,Y\rbrack:=XY-(-1)^{\overline{X}\,\overline{Y}}YX$ for homogeneous elements $X, Y\in\mathfrak{gl}(V)$ of degree $\overline{X},\overline{Y}
\in\mathbb{Z}/2\Z$.
Note that $\mathfrak{gl}(V)\simeq\mathfrak{gl}(2|1)$ as Lie super algebras by identifying $\mathfrak{gl}(V)$ with a matrix Lie super algebra via the ordered basis $\{v_1,v_2,v_3\}$ of 
$V$. 

For $1\leq i,j\leq 3$ we write $E_{ij}\in \mathfrak{gl}(V)$ for the matrix units defined by
\[
E_{ij}(v_k):=\delta_{j,k}v_i,\qquad k=1,2,3.
\]
The standard Cartan subalgebra $\mathfrak{h}$ of the Lie super algebra $\mathfrak{gl}(V)$ is 
\[
\mathfrak{h}:=\mathbb{C}E_{11}\oplus\mathbb{C}E_{22}\oplus\mathbb{C}E_{33},
\]
which we endow with a symmetric bilinear $\bigl(\cdot,\cdot\bigr): \mathfrak{h}\times\mathfrak{h}\rightarrow\mathbb{C}$ by
\begin{equation*}
\bigl(E_{ii},E_{jj}\bigr)=
\begin{cases}
1\qquad &\hbox{ if }\,\, i=j\in\{1,2\},\\
-1\qquad &\hbox{ if }\,\, i=j=3,\\
0\qquad &\hbox{ otherwise}.
\end{cases}
\end{equation*}
In the definition of weights of a representation below we identify $\mathfrak{h}^*\simeq\mathfrak{h}$ via the non 
degenerate symmetric bilinear form $(\cdot,\cdot)$.

Let $W=W_{\overline{0}}\oplus W_{\overline{1}}$ be a finite dimensional
representation of the Lie super algebra $\mathfrak{gl}(V)$ with representation map $\pi: \mathfrak{gl}(V)\rightarrow
\mathfrak{gl}(W)$. We call $\lambda\in\mathfrak{h}$ a weight of $W$ if the weight space
\[
W[\lambda]:=\{u\in W \,\, | \,\, \pi(h)u=(h,\lambda)u\quad \forall\, h\in\mathfrak{h}\}
\]
is nonzero. We write $P(W)\subset\mathfrak{h}$ for the set of weights of $W$.

The vector representation of $\mathfrak{gl}(V)$ is the $\Z/2\Z$-graded vector space $V$, viewed as representation of the Lie super algebra $\mathfrak{gl}(V)$ by the
natural action of $\mathfrak{gl}(V)$ on $V$.
Note that $V$ decomposes as direct sum of weight spaces with the set of weights $P(V)=\{E_{11},E_{22},-E_{33}\}$
and weight spaces $V[E_{ii}]=\mathbb{C}v_i$ ($i=1,2$) and $V[-E_{33}]=\mathbb{C}v_3$. 

%%%%%%%%%%%%%%%%%%%%%%%%%%%%%%%%%%%%%%%%%
\subsection{The dynamical quantum Yang-Baxter equation associated to $\mathfrak{gl}(2|1)$}
%%%%%%%%%%%%%%%%%%%%%%%%%%%%%%%%%%%%%%%%%
We present here 
Felder's \cite{F1,F2} dynamical quantum Yang-Baxter equation for the
Lie super algebra $\mathfrak{gl}(2|1)$.

Let $W$ be a finite dimensional representation of $\mathfrak{gl}(V)$ with weight decomposition
\[
W=\bigoplus_{\lambda\in P(W)}W[\lambda]
\]
and suppose that $G(\mu): W^{\otimes n}\rightarrow W^{\otimes n}$ is a family of linear operators on $W^{\otimes n}$ depending meromorphically on $\mu\in\mathfrak{h}$.
For $\beta\in\mathbb{C}$ and $1\leq i\leq n$ we write 
\[G(\mu+\beta h_i): W^{\otimes n}\rightarrow W^{\otimes n}
\]
for the linear operator which acts as $G(\mu+\beta\lambda)$
on the subspace $W^{\otimes (i-1)}\otimes W[\lambda] \otimes W^{\otimes (n-i)}$ of $W^{\otimes n}$. More precisely, let 
$\textup{pr}^{(i)}_\lambda: W^{\otimes n}\rightarrow W^{\otimes n}$ be the projection onto the subspace $W^{\otimes (i-1)}\otimes W[\lambda] \otimes W^{\otimes (n-i)}$
along the direct sum decomposition
\[
W^{\otimes n}=\bigoplus_{\lambda\in P(W)}W^{\otimes (i-1)}\otimes W[\lambda] \otimes W^{\otimes (n-i)}.
\]
Then
\[
G(\mu+\beta h_i):=\sum_{\lambda\in P(W)} G(\mu+\beta\lambda)\circ\textup{pr}_\lambda^{(i)}.
\]
Let $\mathcal{R}^W(x;\mu): W\otimes W\rightarrow W\otimes W$ be linear operators, depending meromorphically on $x\in\mathbb{C}$
(the spectral parameter) and $\mu\in\mathfrak{h}$ (the dynamical parameters). Let $\kappa\in\mathbb{C}$. We say that 
$\mathcal{R}^W(x;\mu)$ satisfies the 
{\it dynamical quantum Yang-Baxter
equation in braid-like form} if 
\begin{equation}\label{dynqYBfirstW}
\begin{split}
\mathcal{R}_{12}^W(x;\mu+\kappa h_3)&\mathcal{R}_{23}^W(x+y;\mu-\kappa h_1)
\mathcal{R}_{12}^W(y;\mu+\kappa h_3)=\\
&=\mathcal{R}_{23}^W(y;\mu-\kappa h_1)\mathcal{R}_{12}^W(x+y;\mu+\kappa h_3)
\mathcal{R}_{23}^W(x;\mu-\kappa h_1)
\end{split}
\end{equation}
as linear operators on $W\otimes W\otimes W$. We say that $\mathcal{R}^W(x;\mu)$ is unitary if 
\[
\mathcal{R}^W(x;\mu)\mathcal{R}^W(-x;\mu)=\textup{Id}_{W^{\otimes 2}}.
\]
%%%%%%%%%%%%%%%%%%%%%%%%%%%%%%%%%%%%%%%%%%
\begin{rema}\label{rewrite}
Let $P\in\textup{End}(W\otimes W)$ be the permutation operator and write
\[
\check{\mathcal{R}}^W(x;\mu):=P\mathcal{R}^W(x;\mu) 
\]
with $\mathcal{R}^W$ satisfying \eqref{dynqYBfirstW}. Then $\check{\mathcal{R}}^W(x;\mu)$ satisfies the relation
\begin{equation}\label{dynqYBfirstW2}
\begin{split}
\check{\mathcal{R}}_{23}^W(x;\mu+\kappa h_1)&\check{\mathcal{R}}_{13}^W(x+y;\mu-\kappa h_2)
\check{\mathcal{R}}_{12}^W(y;\mu+\kappa h_3)=\\
&=\check{\mathcal{R}}_{12}^W(y;\mu-\kappa h_3)\check{\mathcal{R}}_{13}^W(x+y;\mu+\kappa h_2)
\check{\mathcal{R}}_{23}^W(x;\mu-\kappa h_1)
\end{split}
\end{equation}
which is the dynamical quantum Yang-Baxter equation as introduced by Felder \cite{F2} with dynamical shifts adjusted
to the action of the Cartan subalgebra $\mathfrak{h}$ of the Lie super algebra $\mathfrak{gl}(V)$. 
\end{rema}
%%%%%%%%%%%%%%%%%%%%%%%%%%%%%%%%%%%%%%%%%

%%%%%%%%%%%%%%%%%%%%%%%%%%%%%%%
\subsection{The dynamical $R$-matrix}\label{Rsub}
%%%%%%%%%%%%%%%%%%%%%%%%%%%%%%%%
We present an explicit elliptic solution of the dynamical quantum Yang-Baxter equation \eqref{dynqYBfirstW} for $W=V$ the vector representation.
Fix the nome $0<p<1$. We express the entries of the elliptic dynamical $R$-matrix in terms of products of renormalised Jacobi theta functions
\[
\theta(z_1,\ldots,z_r;p):=\prod_{j=1}^r\theta(z_j;p),\qquad \theta(z;p):=\prod_{m=0}^{\infty}(1-p^mz)(1-p^{m+1}/z).
\]
The natural building blocks of the $R$-matrix depend on the additional parameter $\kappa\in\mathbb{C}$ and are given by the functions
\begin{equation}\label{AB}
\begin{split}
A^y(x):&=\frac{\theta\bigl(p^{2\kappa},p^{y-x};p\bigr)}{\theta\bigl(p^y,p^{2\kappa-x};p\bigr)}p^{(2\kappa-y)x},\\
B^y(x)&:=\frac{\theta\bigl(p^{2\kappa-y},p^{-x};p\bigr)}{\theta\bigl(p^{2\kappa-x},p^{-y};p\bigr)}p^{2\kappa(x-y)} ,
\end{split}
\end{equation}
and the elliptic $c$-function 
\begin{equation}\label{C}
c(x):=p^{2\kappa x}\frac{\theta(p^{2\kappa+x};p)}{\theta(p^x;p)}.
\end{equation}
To write down explicitly the $R$-matrix $\mathcal{R}(x;\mu)=\mathcal{R}^V(x;\mu): V\otimes V\rightarrow V\otimes V$ 
it is convenient to identify $\mathfrak{h}\simeq\mathbb{C}^3$ via the ordered basis $(E_{11},E_{22},E_{33})$ of $\mathfrak{h}$,
\[
\phi_1E_{11}+\phi_2E_{22}+\phi_3E_{33}\leftrightarrow \underline{\phi}:=(\phi_1,\phi_2,\phi_3).
\]
Note that the weights $\{E_{11},E_{22},-E_{33}\}$ of $V$ correspond to $\{(1,0,0), (0,1,0), (0,0,-1)\}$.

Recall the parity map $p: \{1,2,3\}\rightarrow \Z/2\Z$ given by \eqref{pmap}.
%%%%%%%%%%%%%%%%%%%%%%%%%%%%%%%%%%%%%%%%%%
\begin{defi}\label{fmat}
We write $\mathcal{R}(x;\underline{\phi}): V\otimes V\rightarrow V\otimes V$ for the linear operator satisfying
\begin{equation*}
\begin{split}
\mathcal{R}(x,\underline{\phi})v_i\otimes v_i&=(-1)^{p(i)}\frac{c(x)}{c((-1)^{p(i)}x)}v_i\otimes v_i,\qquad\qquad\qquad\qquad\qquad\qquad 1\leq i\leq 3,\\
\mathcal{R}(x;\underline{\phi})v_i\otimes v_j&=A^{\phi_i-\phi_j}(x)v_i\otimes v_j +(-1)^{p(i)+p(j)}B^{\phi_i-\phi_j}(x)v_j\otimes v_i,\qquad 1\leq i\not=j\leq 3
\end{split}
\end{equation*}
with the $\kappa$-dependent coefficients given by \eqref{AB} and \eqref{C}.
\end{defi}
%%%%%%%%%%%%%%%%%%%%%%%%%%%%%%%%%%%%%%%%%%%%%%%
We can now state the main result of the present paper.
%%%%%%%%%%%%%%%%%%%%%%%%%%%%%%%%%%%%%%%%%%%%%%%%
\begin{thm}\label{mainTHMfirst}
The linear operator $\mathcal{R}(x;\underline{\phi})$ satisfies the 
dynamical
quantum Yang-Baxter equation in braid-like form
\begin{equation}\label{dynqYBfirst}
\begin{split}
\mathcal{R}_{12}(x;\underline{\phi}+\kappa h_3)&\mathcal{R}_{23}(x+y;\underline{\phi}-\kappa h_1)
\mathcal{R}_{12}(y;\underline{\phi}+\kappa h_3)=\\
&=\mathcal{R}_{23}(y;\underline{\phi}-\kappa h_1)\mathcal{R}_{12}(x+y;\underline{\phi}+\kappa h_3)
\mathcal{R}_{23}(x;\underline{\phi}-\kappa h_1)
\end{split}
\end{equation}
as linear operators on $V\otimes V\otimes V$, and the unitarity relation
\[
\mathcal{R}(x;\underline{\phi})\mathcal{R}(-x;\underline{\phi})=\textup{Id}_{V^{\otimes 2}}.
\]
\end{thm}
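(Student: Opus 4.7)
The plan is to split the theorem into its two assertions. Unitarity reduces immediately to local scalar identities on $V\otimes V$ that follow from basic functional identities for Jacobi theta functions. For the dynamical Yang--Baxter relation \eqref{dynqYBfirst} I would favour a structural proof that exploits the fact that $\mathcal{R}(x;\underline{\phi})$ will be shown in Section \ref{Sec6} to arise as the connection matrix of the qKZ system attached to the $\mathcal{U}_q(\widehat{\mathfrak{gl}}(2|1))$ Perk--Schultz spin representation on $V^{\otimes n}$; consistency of connection matrices for the three-variable qKZ system then produces \eqref{dynqYBfirst} automatically. A self-contained direct verification by expanding both sides of \eqref{dynqYBfirst} on basis tensors of $V^{\otimes 3}$ is possible, but amounts to a lengthy bash through theta-function identities and really only provides a consistency check on the structural argument.

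For unitarity I would check $\mathcal{R}(x;\underline{\phi})\mathcal{R}(-x;\underline{\phi})=\textup{Id}$ on each basis tensor. On $v_i\otimes v_i$ the sign $(-1)^{p(i)}$ squares to $1$ and the ratio $c(x)c(-x)/\bigl(c((-1)^{p(i)}x)c(-(-1)^{p(i)}x)\bigr)$ collapses to $1$. On $v_i\otimes v_j$ with $i\neq j$ the fermionic signs $(-1)^{p(i)+p(j)}$ also square away and one is reduced to
\begin{equation*}
A^{y}(x)A^{y}(-x)+B^{y}(x)B^{-y}(-x)=1,\qquad A^{y}(-x)B^{y}(x)+B^{y}(-x)A^{-y}(x)=0,
\end{equation*}
with $y=\phi_i-\phi_j$. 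Both identities follow directly from the explicit formulas \eqref{AB} together with the inversion identity $\theta(p^{-a};p)=-p^{-a}\theta(p^{a};p)$.

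For the dynamical Yang--Baxter equation the structural argument runs as follows. Section \ref{Sec5} will decompose the spin representation of the affine Hecke algebra of type $A_2$ on $V^{\otimes 3}$ into principal series blocks, and the appendix, extending \cite[\S 3]{S1}, provides the explicit connection matrix of the quantum affine KZ equation on any principal series module. Gluing these block-wise connection matrices produces a connection matrix $C(x;\underline{\phi})$ for the full qKZ system on $V^{\otimes 3}$, whose two-site reductions are precisely $\mathcal{R}_{12}$ and $\mathcal{R}_{23}$ with the appropriate dynamical shifts. The braid relation $s_1s_2s_1=s_2s_1s_2$ in the Weyl group $S_3$ yields two distinct factorisations of $C(x;\underline{\phi})$ between the same pair of asymptotic chambers; equating them is precisely \eqref{dynqYBfirst}, the $\pm\kappa h_i$ shifts emerging from the weight gradings used to assemble the principal series blocks, with $\kappa$ encoding the level of the qKZ equation. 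The main obstacle is bookkeeping: one has to verify that the explicit entries of Definition \ref{fmat}, including the fermionic diagonal sign at $v_3\otimes v_3$ and the $(-1)^{p(i)+p(j)}$ prefactors off the diagonal, really match the output of the principal series computation on the nose, and that the dynamical shifts in \eqref{dynqYBfirst} line up with the weight shifts produced by the gluing procedure. If instead one takes the direct route, the real work is the case of three pairwise distinct indices in $V^{\otimes 3}$, where both sides of \eqref{dynqYBfirst} expand into sums over the six elements of $S_3$ and matching coefficients reduces to three-term theta relations of Riemann type, decorated by the super signs coming from the index $3$.
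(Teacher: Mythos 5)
Your route for the dynamical Yang--Baxter relation is the paper's own: decompose the rank-three spin representation into principal series blocks (Proposition \ref{blockiso}), import the explicit connection matrices from the appendix, identify the modified monodromy operators $\widetilde{M}^{\pi_{\mathcal{B},D}}(s_1),\widetilde{M}^{\pi_{\mathcal{B},D}}(s_2)$ with local actions of $\mathcal{R}$ (the case-by-case check of \eqref{toprove1} via Corollary \ref{explicitontensorCOR}), and invoke the cocycle property to get the braid relation. Two remarks on where your sketch and the paper diverge. First, unitarity: the paper does not verify theta identities at all; it obtains unitarity for free from \eqref{unitarity} once $\widetilde{M}^{\pi_{\mathcal{B},D}}(s)=\mathcal{R}(z_1-z_2;\underline{\phi})$ is established at $n=2$. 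Your direct check is also viable, but your justification is too optimistic: the off-diagonal cancellation does follow from the inversion identity $\theta(p^{-a};p)=-p^{-a}\theta(p^{a};p)$ alone, whereas the diagonal identity $A^{y}(x)A^{y}(-x)+B^{y}(x)B^{-y}(-x)=1$ is a genuine three-term Riemann-type theta relation, so as written that step has a small gap. Second, the "bookkeeping" you defer is where the paper does real work: the cocycle argument first yields \eqref{dynqYBcomp} with the shift vectors \eqref{psi1}, whose first and second entries carry an extra $-\pi\sqrt{-1}\log(p)^{-1}$ in the third coordinate, and one must then argue (by shifting $\phi_3$ by an imaginary period and inspecting exactly which basis tensors feel the third dynamical coordinate) that these spurious terms can be dropped to reach the clean shifts $\underline{\phi}+\kappa h_3$, $\underline{\phi}-\kappa h_1$ of \eqref{dynqYBfirst}. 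That reduction, together with the explicit computation of the $\gamma^{(\mathbf{r})}$ entering Corollary \ref{explicitontensorCOR}, is the substantive content of the paper's proof of Theorem \ref{mainTHMfirst}, so your plan is correct in outline but incomplete precisely at those two points.
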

%%%%%%%%%%%%%%%%%%%%%%%%%%%%%%%%%%%%%%%%%%%%%%%%%%%%%%%%%%%%
The theorem can be proved by direct computations. The main point of the present paper is to explain how elliptic solutions of 
dynamical quantum Yang-Baxter equations,
like $\mathcal{R}(x;\underline{\phi})$, can be {\it found}
by explicitly computing connection matrices of quantum affine KZ equations. 

For example, 
%%%%%%%%%%%%%%%%%%%%%%%%%%%%%%%%%%%%%%%%%%%%%%%
\begin{equation}\label{ellRgl(2)}
\left(\begin{matrix} 1 & 0 & 0 & 0\\
0 & A^{y}(x) & B^{-y}(x) & 0\\
0 & B^y(x) & A^{-y}(x) & 0\\
0 & 0 & 0 & 1\end{matrix}\right)
\end{equation}
is an elliptic solution of a $\mathfrak{gl}(2)$ dynamical quantum Yang-Baxter equation in braid form, 
with $x$ the spectral parameter and $y$ the dynamical parameter, which governs the integrability of Baxter's $8$-vertex face model,
see for instance \cite{Ba, FR} and \cite{S1}. It was obtained in \cite{FR} 
by solving the connection problem of the qKZ equations associated to the spin-$\frac{1}{2}$ XXZ
chain. The associated spin representation is constructed from the $\mathcal{U}_q(\mathfrak{gl}(2))$ vector representation. 

In the following sections we show that our present solution $\mathcal{R}(x;\underline{\phi})$ can be obtained from the connection problem of the quantum affine KZ equations 
associated to the $\mathcal{U}_q(\widehat{\mathfrak{gl}}(2|1))$ Perk-Schultz model. In this case the associated spin representation is $V^{\otimes n}$
with $V$ the $\mathcal{U}_q(\mathfrak{gl}(2|1))$ vector representation, viewed as spin representation of the affine Hecke algebra by the action
of the universal $R$-matrix on neighbouring tensor legs \cite{Ji1,Ji2}. We expect that $\mathcal{R}(x;\underline{\phi})$ is closely related to Okado's \cite{O}
face model attached to $\mathfrak{sl}(2|1)$.

%%%%%%%%%%%%%%%%%%%%%%%%%%%%%%%%%%%%%%%%%%%%%%%%%
\begin{rema}
With respect to the ordered basis
\begin{equation}\label{orderedbasis}
\{v_1\otimes v_1, v_1\otimes v_2, v_1\otimes v_3, v_2\otimes v_1, v_2\otimes v_2, v_2\otimes v_3,
v_3\otimes v_1, v_3\otimes v_2, v_3\otimes v_3\},
\end{equation}
the solution $\mathcal{R}(x;\underline{\phi})$ is explicitly expressed as 
%%%%%%%%%%%%%%%%%%%%%%%%%%%%%%%%%%%%%%%%%%
\begin{equation*}
\resizebox{0.97\hsize}{!}{$\left(\begin{matrix} 1 & 0 & 0 & 0 & 0 & 0 & 0 & 0 & 0\\
0 & A^{\phi_1-\phi_2}(x) & 0 & B^{\phi_2-\phi_1}(x) & 0 & 0 & 0 & 0 & 0\\
0 & 0 & A^{\phi_1-\phi_3}(x) & 0 & 0 & 0 & -B^{\phi_3-\phi_1}(x) & 0 & 0\\
0 & B^{\phi_1-\phi_2}(x) & 0 & A^{\phi_2-\phi_1}(x) & 0 & 0 & 0 & 0 & 0\\
0 & 0 & 0 & 0 & 1 & 0 & 0 & 0 & 0\\
0 & 0 & 0 & 0 & 0 & A^{\phi_2-\phi_3}(x) & 0 & -B^{\phi_3-\phi_2}(x) & 0\\
0 & 0 & -B^{\phi_1-\phi_3}(x) & 0 & 0 & 0 & A^{\phi_3-\phi_1}(x) & 0 & 0\\
0 & 0 & 0 & 0 & 0 & -B^{\phi_2-\phi_3}(x) & 0 & A^{\phi_3-\phi_2}(x) & 0\\
0 & 0 & 0 & 0 & 0 & 0 & 0 & 0 & -\frac{c(x)}{c(-x)}
\end{matrix}\right) .$}
\end{equation*}
Note that the dependence on the dynamical parameters $\underline{\phi}$ is a $2$-dimensional dependence, reflecting the fact
that it indeed corresponds to the Lie super algebra $\mathfrak{sl}(2|1)$.
\end{rema}

%%%%%%%%%%%%%%%%%%%%%%%%%%%%%%%%%%%%%%%%%%
\section{Representations of the extended affine Hecke algebra}\label{AHAsection}
%%%%%%%%%%%%%%%%%%%%%%%%%%%%%%%%%%%%%%%%%%%
In this section we recall the relevant representation theoretic features of affine Hecke algebras.
%%%%%%%%%%%%%%%%%%%%%%%%%%%%%%%%%%%%%%%%%%%%
\subsection{The extended affine Hecke algebra}
%%%%%%%%%%%%%%%%%%%%%%%%%%%%%%%%%%%%%%%%%%%%
Let $n\geq 2$ and fix $0<p<1$ once and for all.
Fix a generic $\kappa\in\mathbb{C}$ and write $q=p^{-\kappa}\in\mathbb{C}^\times$. The extended affine Hecke algebra $H_n(q)$ of type
$A_{n-1}$ is the unital associative algebra over $\mathbb{C}$ generated by $T_1,\ldots,T_{n-1}$ and $\zeta^{\pm 1}$ with defining relations
\begin{equation*}
\begin{split}
T_iT_{i+1}T_i&=T_{i+1}T_iT_{i+1},\qquad 1\leq i<n-1,\\
T_iT_j&=T_jT_i,\quad\qquad\qquad |i-j|>1,\\
(T_i-q)&(T_i+q^{-1})=0,\qquad 1\leq i<n,\\
\zeta\zeta^{-1}&=1=\zeta^{-1}\zeta,\\
\zeta T_i&=T_{i+1}\zeta,\qquad\qquad 1\leq i<n-1,\\
\zeta^2T_{n-1}&=T_1\zeta^2.
\end{split}
\end{equation*}
Note that $T_i$ is invertible with inverse $T_i^{-1}=T_i-q+q^{-1}$. The subalgebra $H_n^{(0)}(q)$ of $H_n(q)$  generated by $T_1,\ldots,T_{n-1}$ is the finite Hecke algebra
of type $A_{n-1}$. Define for $1\leq i\leq n$,
\begin{equation}\label{Yi}
Y_i:=T_{i-1}^{-1}\cdots T_2^{-1}T_1^{-1}\zeta T_{n-1}\cdots T_{i+1}T_i\in H_n(q).
\end{equation}
Then $\lbrack Y_i,Y_j\rbrack=0$ for $1\leq i,j\leq n$ and $H_n(q)$ is generated as algebra by $H_n^{(0)}(q)$ and the abelian subalgebra $\mathcal{A}$ generated by
$Y_i^{\pm 1}$ ($1\leq i\leq n$).

The Hecke algebra $H_n^{(0)}(q)$ is a deformation of the group algebra of the symmetric group $S_n$ in $n$ letters. For $1\leq i<n$ we write $s_i$ for the standard Coxeter generator of $S_n$ given by the simple neighbour transposition $i\leftrightarrow i+1$. The extended affine algebra $H_n(q)$ is a deformation of the group algebra of the extended affine symmetric group $S_n\ltimes\mathbb{Z}^n$, where $S_n$ acts on $\mathbb{Z}^n$ via the permutation action. 

The commutation relations between $T_i$ ($1\leq i<n$) and $Y^\lambda:=Y_1^{\lambda_1}Y_2^{\lambda_2}\cdots Y_n^{\lambda_n}$ ($\lambda\in\mathbb{Z}^n$) are given
by the Bernstein-Zelevinsky cross relations
\begin{equation}\label{BZ}
T_iY^\lambda-Y^{s_i\lambda}T_i=(q-q^{-1})\left(\frac{Y^\lambda-Y^{s_i\lambda}}{1-Y_i^{-1}Y_{i+1}}\right).
\end{equation}
Note that the right hand side, expressed as element of the quotient field of $\mathcal{A}$, actually lies in $\mathcal{A}$.

%%%%%%%%%%%%%%%%%%%%%%%%%%%%%%%%%%%%%%%%%%%
\subsection{Principal series representations}\label{PSRsection}
%%%%%%%%%%%%%%%%%%%%%%%%%%%%%%%%%%%%%%%%%%%%
Let $I\subseteq\{1,\ldots,n-1\}$. We write $S_{n,I}\subseteq S_n$ for the subgroup generated by $s_i$ ($i\in I$). It is called the standard parabolic subgroup of $S_n$ 
associated to $I$. The standard parabolic subalgebra $H_I(q)$ of $H_n(q)$ associated to $I$ is 
the subalgebra generated by $T_i$ ($i\in I$) and 
$\mathcal{A}$. Note that $H_\emptyset(q)=\mathcal{A}$.

Let $\epsilon=(\epsilon_i)_{i\in I}$ be a $\#I$-tuple of signs, indexed by $I$, such that $\epsilon_i=\epsilon_j$ if $s_i$ and $s_j$ are in the same conjugacy class of $S_{n,I}$. 
Define
\[
E^{I,\epsilon}:=\{\gamma=(\gamma_1,\ldots,\gamma_n)\in \mathbb{C}^{n} \,\, | \,\, \gamma_i-\gamma_{i+1}=2\epsilon_i\kappa\quad \forall i\in I\}.
\]
For $\gamma\in E^{I,\epsilon}$ there exists a unique linear character $\chi_\gamma^{I,\epsilon}: H_I(q)\rightarrow\mathbb{C}$ satisfying
\begin{equation}\label{assignment}
\begin{split}
\chi_\gamma^{I,\epsilon}(T_i)&=\epsilon_iq^{\epsilon_i}=\epsilon_ip^{-\epsilon_i\kappa},\qquad i\in I,\\
\chi_\gamma^{I,\epsilon}(Y_j)&=p^{-\gamma_j},
\quad\qquad\qquad\,\,\,\,\,\, 1\leq j\leq n \, .
\end{split}
\end{equation}
Indeed, \eqref{assignment} respects the braid relations,
the Hecke relations $(T_i-q)(T_i+q^{-1})=0$ ($i\in I$) and the cross relations \eqref{BZ} for $i\in I$ and $1\leq j\leq n$.
We write $\mathbb{C}_{\chi_\gamma^{I,\epsilon}}$ for the corresponding one-dimensional $H_I(q)$-module. The {\it principal series module}
$M^{I,\epsilon}(\gamma)$ for $\gamma\in E^{I,\epsilon}$ is the induced $H_n(q)$-module
\[
M^{I,\epsilon}(\gamma):=\textup{Ind}_{H_I(q)}^{H_n(q)}\bigl(\chi_\gamma^{I,\epsilon}\bigr)=H_n(q)\otimes_{H_I(q)}\mathbb{C}_{\chi_\gamma^{I,\epsilon}}.
\]
We write $\pi_\gamma^{I,\epsilon}$ for the corresponding representation map and $v^{I,\epsilon}(\gamma):=1\otimes_{H_I(q)}1\in M^{I,\epsilon}(\gamma)$ for the canonical cyclic vector of $M^{I,\epsilon}(\gamma)$.

To describe a natural basis of the principal series module $M^{I,\epsilon}(\gamma)$ we need to recall first the definition of standard parabolic subgroups of $S_n$.
Let $w\in S_n$. We call an expression
\begin{equation}\label{redexp}
w=s_{i_1}s_{i_2}\cdots s_{i_{r}}
\end{equation}
reduced if the word \eqref{redexp} of $w$ as product of simple neighbour transpositions $s_i$ is of minimal length.
The minimal length $r$ of the word is called the length of $w$ and is denoted by $l(w)$. 
Let $S_n^I$ be the minimal coset representatives of the left coset space $S_n/S_{n,I}$.
It consists of the elements $w\in S_n$ such that $l(ws_i)=l(w)+1$ for all $i\in I$. 

For a reduced expression \eqref{redexp} of $w\in S_n$, the element
\[
T_w:=T_{i_1}T_{i_2}\cdots T_{i_{l(w)}}\in H_n^{(0)}(q)
\]
is well defined. 
Set
\[
v_w^{I,\epsilon}(\gamma):=\pi_\gamma^{I,\epsilon}(T_w)v^{I,\epsilon}(\gamma), \qquad w\in S_n^I.
\]
Then $\{v_w^{I,\epsilon}(\gamma)\}_{w\in S_n^I}$ is a linear basis of $M^{I,\epsilon}(\gamma)$ called the {\it standard basis} of $M^{I,\epsilon}(\gamma)$.
%%%%%%%%%%%%%%%%%%%%%%%%%%%%%%%%%%%%%%%%%%%%%%%
\subsection{Spin representations}
%%%%%%%%%%%%%%%%%%%%%%%%%%%%%%%%%%%%%%%%%%%%%%%
Let $W$ be a finite dimensional complex vector space and let $\mathcal{B}\in\textup{End}(W\otimes W)$ satisfy the braid relation
\[ \label{braid}
\mathcal{B}_{12}\mathcal{B}_{23}\mathcal{B}_{12}=\mathcal{B}_{23}\mathcal{B}_{12}\mathcal{B}_{23}
\]
as a linear endomorphism of $W^{\otimes 3}$ where we have used the usual tensor leg notation, i.e. $\mathcal{B}_{12}=\mathcal{B}\otimes\textup{Id}_W$ and
$\mathcal{B}_{23}=\textup{Id}_W\otimes\mathcal{B}$. In addition, let $\mathcal{B}$ satisfy the Hecke relation
\begin{equation}\label{HeckeRelation}
(\mathcal{B}-q)(\mathcal{B}+q^{-1})=0
\end{equation}
and suppose that $D\in\textup{GL}(W)$ is such that $\lbrack D\otimes D,\mathcal{B}\rbrack=0$. 
Then there exists a unique representation $\pi_{\mathcal{B},D}: H_n(q)\rightarrow\textup{End}(W^{\otimes n})$ such that
\begin{equation*}
\begin{split}
\pi_{\mathcal{B},D}(T_i)&:=\mathcal{B}_{i,i+1},\qquad 1\leq i<n,\\
\pi_{\mathcal{B},D}(\zeta)&:=P_{12}P_{23}\cdots P_{n-1,n}D_n
\end{split}
\end{equation*}
(recall that $P\in\textup{End}(W\otimes W)$ denotes the permutation operator).
We call $\pi_{\mathcal{B},D}$ the {\it spin representation}  associated to $(\mathcal{B},D)$.
Spin representations arise in the context of integrable one-dimensional spin chains with Hecke algebra symmetries 
and twisted boundary conditions, see for instance \cite{dV}. The corresponding spin chains are governed by the Baxterization
\begin{equation}\label{Baxterization}
R^{\mathcal{B}}(z):=P\circ\left(\frac{\mathcal{B}^{-1}-z\mathcal{B}}{q^{-1}-qz}\right)
\end{equation}
of $\mathcal{B}$, which is a unitary (i.e., $R^{\mathcal{B}}_{21}(z)^{-1}=R^{\mathcal{B}}(z^{-1})$) solution of the quantum Yang-Baxter equation
\begin{equation}\label{qYBB}
R_{12}^{\mathcal{B}}(x)R_{13}^{\mathcal{B}}(xy)R_{23}^{\mathcal{B}}(y)=
R_{23}^{\mathcal{B}}(y)R_{13}^{\mathcal{B}}(xy)R_{12}^{\mathcal{B}}(x).
\end{equation}

%%%%%%%%%%%%%%%%%%%%%%%%%%%%%%%%%%%%%%%%%%%%%
\subsection{Quantum KZ equations and the connection problem}\label{SecNew}
%%%%%%%%%%%%%%%%%%%%%%%%%%%%%%%%%%%%%%%%%%%%%
We introduce Cherednik's \cite{C} quantum KZ equations attached to representations of the affine Hecke algebra $H_n(q)$. Following
\cite{S1} we formulate the associated connection problem.

Let $\mathcal{M}$ be the field of meromorphic functions on $\mathbb{C}^n$.  
We write $F$ for the field of $\mathbb{Z}^n$-translation invariant meromorphic functions
on $\mathbb{C}^n$.

Let $\{e_i\}_{i=1}^n$ be the standard linear basis of $\mathbb{C}^n$,
with $e_i$ having a one at the $i$th entry and zeros everywhere else. We define an action $\sigma: S_n\ltimes\mathbb{Z}^n\rightarrow\textup{GL}(\mathcal{M})$
 of the extended affine symmetric group $S_n\ltimes\mathbb{Z}^n$ on $\mathcal{M}$ by
\begin{equation*}
\begin{split}
\bigl(\sigma(s_i)f\bigr)(\mathbf{z})&:=f(z_1,\ldots,z_{i-1},z_{i+1},z_i,z_{i+2},\ldots,z_n),\qquad 1\leq i<n,\\
\bigl(\sigma(\tau(e_j))f\bigr)(\mathbf{z})&:=f(z_1,\ldots,z_{j-1},z_j-1,z_{j+1},\ldots, z_{n}),\qquad 1\leq j\leq n
\end{split}
\end{equation*}
for $f\in\mathcal{M}$,
where we have written $\mathbf{z}=(z_1,\ldots,z_n)$ and $\tau(e_j)$ denotes the element in $S_n\ltimes\mathbb{Z}^n$ corresponding
to $e_j\in\mathbb{Z}^n$. 
The element $\xi:=s_1\cdots s_{n-2}s_{n-1}\tau(e_n)$ 
acts as $\bigl(\sigma(\xi)f\bigr)(\mathbf{z})=f(z_2,\ldots,z_{n},z_1-1)$.

Let $L$ be a finite dimensional complex vector space. We write
$\sigma_L$ for the action $\sigma\otimes\textup{Id}_L$ of $S_n\ltimes\mathbb{Z}^n$ on the corresponding space $\mathcal{M}\otimes L$ of meromorphic $L$-valued
functions on $\mathbb{C}^n$. 

Given a complex representation $(\pi,L)$ of $H_n(q)$ there exists a unique family $\{C_w^\pi\}_{w\in S_n\ltimes\mathbb{Z}^n}$ of $\textup{End}(L)$-valued meromorphic
functions $C_w^\pi$ on $\mathbb{C}^n$ satisfying the cocycle conditions
\begin{equation*}
\begin{split}
C_{uv}^\pi&=C_u^\pi\sigma_L(u)C_v^\pi\sigma_L(u^{-1}),\qquad u,v\in S_n\ltimes\mathbb{Z}^n,\\
C_e^\pi&\equiv \textup{Id}_L,
\end{split}
\end{equation*}
where $e\in S_n$ denotes the neutral element, and satisfying
\begin{equation*}
\begin{split}
C_{s_i}^\pi(\mathbf{z})&:=\frac{\pi(T_i^{-1})-p^{z_i-z_{i+1}}\pi(T_i)}{q^{-1}-qp^{z_i-z_{i+1}}},\qquad 1\leq i<n,\\
C_{\xi}^\pi(\mathbf{z})&:=\pi(\zeta).
\end{split}
\end{equation*}
It gives rise to a complex linear action $\nabla^\pi$ of $S_n\ltimes\mathbb{Z}^n$ on $\mathcal{M}\otimes L$ by
\[
\nabla^\pi(w):=C_w^\pi\sigma_L(w),\qquad w\in S_n\ltimes \mathbb{Z}^n.
\]
%%%%%%%%%%%%%%%%%%%%%%%%%%%%%%%%%%%%%%%%%%%%%
\begin{rema}
For a spin representation $\pi_{\mathcal{B},D}: H_n(q)\rightarrow\textup{End}(W^{\otimes n})$,
\[
C_{s_i}^{\pi_{\mathcal{B},D}}(\mathbf{z})=P_{i,i+1}R_{i,i+1}^{\mathcal{B}}(p^{z_i-z_{i+1}}),
\qquad 1\leq i<n
\]
with $R^{\mathcal{B}}(z)$ given by \eqref{Baxterization}.
\end{rema}
%%%%%%%%%%%%%%%%%%%%%%%%%%%%%%%%%%%%%%%%%%%%%
\begin{defi}
Let $(\pi,L)$ be a finite dimensional representation of $H_n(q)$. We say that $f\in\mathcal{M}\otimes L$ is a solution of the quantum affine KZ
equations if 
\[
C_{\tau(e_j)}^\pi(\mathbf{z})f(\mathbf{z}-e_j)=f(\mathbf{z}),\qquad j=1,\ldots,n.
\]
We write $\textup{Sol}^\pi\subseteq \mathcal{M}\otimes L$ for the solution space.
\end{defi}
%%%%%%%%%%%%%%%%%%%%%%%%%%%%%%%%%%%%%%%%%%%%%%
Alternatively,
\[
\textup{Sol}^{\pi}=\{f\in \mathcal{M}\otimes L \,\, | \,\, \nabla^\pi(\tau(\lambda))f=f\quad \forall\, \lambda\in\mathbb{Z}^n\}.
\]
Observe that $\textup{Sol}^\pi$ is a $F$-module. The symmetric group $S_n$ acts $F^{S_n}$-linearly on $\textup{Sol}^{\pi}$ by $\nabla^{\pi}|_{S_n}$.

In the limit $\Re(z_i-z_{i+1})\rightarrow-\infty$ ($1\leq i<n$) the transport operators $C_{\tau(\lambda)}^{\pi}(\mathbf{z})$ 
tend to commuting linear operators $\pi(\widetilde{Y}^\lambda)$ on $L$  for $\lambda\in\mathbb{Z}^n$. The commuting elements $\widetilde{Y}^\lambda\in H_n(q)$
are explicitly given by
\[
\widetilde{Y}^{\lambda}:=p^{-(\rho,\lambda)}T_{w_0}Y^{w_0\lambda}T_{w_0}^{-1}
\]
with $w_0\in S_n$ the longest Weyl group element and $\rho=((n-1)\kappa,(n-3)\kappa,\ldots,(1-n)\kappa)$ (see \cite{S1} and the appendix).

For a generic class of finite dimensional complex affine Hecke algebra modules the solution space of the quantum KZ equations can be
described explicitly in terms of asymptotically free solutions. The class of representations is defined as follows.
Write $\varpi_i=e_1+\cdots+e_i$ for $i=1,\ldots,n$.
%%%%%%%%%%%%%%%%%%%%%%%%%%%%%%%%%%%%%%%%%%%%%%%
\begin{defi} Let $\pi: H_n(q)\rightarrow\textup{End}(L)$ be a finite dimensional representation.
\begin{enumerate}
\item[{\bf 1.}] We call $(\pi,L)$ calibrated if $\pi(\widetilde{Y}_j)\in\textup{End}(L)$ is diagonalisable for $j=1,\ldots,n$, i.e. if
\[
L=\bigoplus_{\mathbf{s}}L[\mathbf{s}]
\]
with $L[\mathbf{s}]:=\{v\in L  \,\, | \,\, \pi(\widetilde{Y}^\lambda)v=p^{(\mathbf{s},\lambda)}v\,\,\, (\lambda\in\mathbb{Z}^n)\}$, 
where $\mathbf{s}\in\mathbb{C}^n/2\pi\sqrt{-1}\log(p)^{-1}\mathbb{Z}^n$.
\item[{\bf 2.}] We call $(\pi,L)$ generic if it is calibrated and if the nonresonance conditions
\[
p^{(\mathbf{s}^\prime-\mathbf{s},\varpi_i)}\not\in p^{\mathbb{Z}\setminus\{0\}}\qquad \forall\, i\in\{1,\ldots,n-1\}
\]
hold true for $\mathbf{s}$ and $\mathbf{s}^\prime$ such that $L[\mathbf{s}]\not=\{0\}\not=L[\mathbf{s}^\prime]$.
\end{enumerate}
\end{defi}
%%%%%%%%%%%%%%%%%%%%%%%%%%%%%%%%%%%%%%%%%%%%%%
Set 
\[
Q_+:=\bigoplus_{i=1}^{n-1}\mathbb{Z}_{\geq 0}(e_i-e_{i+1}).
\]
We recall the following key result on the structure of the solutions of the quantum KZ equations.
%%%%%%%%%%%%%%%%%%%%%%%%%%%%%%%%%%%%%%%%%%%%%%%
\begin{thm}[\cite{vMS}]\label{iso}
Let $(\pi,L)$ be a generic $H_n(q)$-representation and $v\in L[\mathbf{s}]$. There exists a unique 
meromorphic solution $\Phi_v^\pi$ of the quantum KZ equations characterised by the series expansion
\[
\Phi_v^\pi(\mathbf{z})=p^{(\mathbf{s},\mathbf{z})}\sum_{\alpha\in Q_+}\Gamma_v^\pi(\alpha)p^{-(\alpha,\mathbf{z})},\qquad
\Gamma_v^\pi(0)=v
\]
for $\Re(z_i-z_{i+1})\ll 0$ ($1\leq i<n$). The assignment $f\otimes v\mapsto f\Phi_v^\pi$ ($f\in F$, $v\in L[\mathbf{s}]$) defines
a $F$-linear isomorphism 
\[
S^\pi: F\otimes L\overset{\sim}{\longrightarrow}\textup{Sol}^{\pi}.
\]
\end{thm}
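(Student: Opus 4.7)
The plan is to construct $\Phi_v^\pi$ from its series ansatz by deriving a triangular recursion for the coefficients $\Gamma_v^\pi(\alpha)$, proving convergence in the asymptotic sector, and then using the leading asymptotics to show that $S^\pi$ is an $F$-linear bijection.

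\textbf{Recursion.} First I would expand each transport operator $C_{\tau(e_j)}^\pi(\mathbf{z})$ as a convergent series in the variables $p^{z_{i+1}-z_i}$ for $1\leq i<n$, which tend to zero in the asymptotic sector $\Re(z_i-z_{i+1})\ll 0$ and whose zeroth-order term is the constant operator $\pi(\widetilde{Y}_j)$, as stated in the excerpt. Substituting the ansatz into $C_{\tau(e_j)}^\pi(\mathbf{z})\Phi_v^\pi(\mathbf{z}-e_j)=\Phi_v^\pi(\mathbf{z})$ and matching the coefficient of $p^{(\mathbf{s}-\alpha,\mathbf{z})}$ yields a triangular system
\[
\bigl(\pi(\widetilde{Y}_j)-p^{(\mathbf{s}-\alpha,e_j)}\bigr)\Gamma_v^\pi(\alpha)=R^{(j)}_\alpha\bigl(\Gamma_v^\pi(\beta):\beta<\alpha\bigr),
\]
in which the right-hand side involves only coefficients $\Gamma_v^\pi(\beta)$ with $\beta\in Q_+$ strictly smaller than $\alpha$ in the standard partial order.

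\textbf{Solvability and convergence.} At $\alpha=0$ the equation is satisfied by the chosen $v\in L[\mathbf{s}]$. For $\alpha\in Q_+\setminus\{0\}$, I would decompose $\Gamma_v^\pi(\alpha)$ along the joint $\widetilde{Y}$-weight spaces $L[\mathbf{s}']$, which exist because $\pi$ is calibrated; on each such weight space the operator on the left becomes scalar multiplication by $p^{(\mathbf{s}',e_j)}-p^{(\mathbf{s}-\alpha,e_j)}$. The nonresonance hypothesis $p^{(\mathbf{s}'-\mathbf{s},\varpi_i)}\notin p^{\mathbb{Z}\setminus\{0\}}$ guarantees that for every nonzero $\alpha$ there exists an index $j$ for which this scalar is nonzero, so $\Gamma_v^\pi(\alpha)$ is uniquely determined; compatibility of the $n$ recursions (for $j=1,\ldots,n$) is automatic from the cocycle property of $C_{\tau(\cdot)}^\pi$, which encodes the commutativity of the $\widetilde{Y}_i$. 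The main technical obstacle is convergence of the resulting series: one must show $\|\Gamma_v^\pi(\alpha)\|\leq Cr^{|\alpha|}$ by a majorant estimate, which requires a uniform lower bound on $|p^{(\mathbf{s}',e_j)}-p^{(\mathbf{s}-\alpha,e_j)}|$ as $\alpha$ ranges over $Q_+$. Such a bound has to be extracted from the nonresonance condition together with the finiteness of the weight set $\{\mathbf{s}'\}$ appearing in $L$, and this is the genuinely analytic step of the argument.

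\textbf{The isomorphism $S^\pi$.} The map $v\mapsto\Phi_v^\pi$ is $\mathbb{C}$-linear on each weight space $L[\mathbf{s}]$ and commutes with multiplication by any $f\in F$, since multiplication by a $\mathbb{Z}^n$-translation invariant function preserves the qKZ system, so $S^\pi$ is well defined and $F$-linear. For injectivity I would consider a vanishing combination $\sum_{v}f_v\Phi_v^\pi=0$ and extract its leading asymptotics; since the leading exponentials $p^{(\mathbf{s},\mathbf{z})}$ attached to distinct weights $\mathbf{s}$ are linearly independent over $F$, each $f_v$ must vanish. For surjectivity, a general $g\in\textup{Sol}^\pi$ admits in the asymptotic sector a series expansion obtained by iterating the qKZ equation and diagonalising the $\pi(\widetilde{Y}_j)$; its leading coefficients lie in finitely many weight spaces $L[\mathbf{s}]$, and subtracting an appropriate $\sum_{v}f_v\Phi_v^\pi$ produces a qKZ solution with no leading term, which by the uniqueness of the recursion is identically zero. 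This identifies $\textup{Sol}^\pi$ with $F\otimes L$ as $F$-modules, completing the proof.
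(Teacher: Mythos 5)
The paper itself does not prove Theorem~\ref{iso}; it is quoted from \cite{vMS}, so there is no in-text argument to compare your attempt against. That said, your outline is a faithful reconstruction of the standard argument for asymptotically free solutions of holonomic $q$-difference systems, which is the approach taken in \cite{vMS}: expand the transport operators $C_{\tau(\lambda)}^\pi(\mathbf{z})$ around their braid limit $\pi(\widetilde{Y}^\lambda)$ in powers of $p^{-(\alpha_i,\mathbf{z})}$, read off a triangular recursion for the $\Gamma^\pi_v(\alpha)$, invert on $\widetilde{Y}$-weight components using the nonresonance hypothesis, control the growth of the coefficients, and then use the leading exponentials to identify $\textup{Sol}^\pi$ with $F\otimes L$.

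Two points are worth sharpening. First, your nonresonance step passes too quickly from the hypothesis, which is phrased in terms of the fundamental coweights $\varpi_i$ ($1\le i<n$), to a claim about the $e_j$; the bridge is that if $p^{(\mathbf{s}'+\alpha-\mathbf{s},e_j)}=1$ for all $j$, then writing $\alpha=\sum_k m_k(e_k-e_{k+1})\in Q_+\setminus\{0\}$ and choosing $i$ with $m_i>0$ forces $p^{(\mathbf{s}'-\mathbf{s},\varpi_i)}=p^{-m_i}\in p^{\mathbb{Z}\setminus\{0\}}$, contradicting genericity, so for each fixed $\alpha\neq 0$ and weight $\mathbf{s}'$ at least one of the $n$ recursions is invertible. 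You should also note explicitly that the commutativity of the $\widetilde{Y}_j$ (equivalently the cocycle relation) is what makes the $n$ recursions consistent, so that the component determined by one $j$ also satisfies the others. Second, the convergence bound $\|\Gamma_v^\pi(\alpha)\|\le C r^{|\alpha|}$, which you rightly flag as the genuinely analytic step, is not obtained in \cite{vMS} by a direct majorant estimate of the type you gesture at; it is handled there within the bispectral formulation of the quantum KZ equations. Provided you are aware that this is where the real technical work lies, your outline correctly captures the logical skeleton of the proof.
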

%%%%%%%%%%%%%%%%%%%%%%%%%%%%%%%%%%%%%%%%%%%%%%%%%
\begin{rema}\label{functoriality}
Let $(\pi,L)$ and $(\pi^\prime,L^\prime)$ be two generic $H_n(q)$-representations and 
$T: L\rightarrow L^\prime$ an intertwiner of $H_n(q)$-modules. Also write $T$ for its $\mathcal{M}$-linear
extension $\mathcal{M}\otimes L\rightarrow\mathcal{M}\otimes L$. Then
\[
T\circ S^\pi=S^{\pi^\prime}\circ T
\]
as $F$-linear maps $F\otimes L\rightarrow \textup{Sol}^{\pi^\prime}$ since
\[
T\bigl(\Phi_v^\pi(\mathbf{z})\bigr)=\Phi_{T(v)}^{\pi^\prime}(\mathbf{z})
\]
for $v\in L[\mathbf{s}]$.
\end{rema}
%%%%%%%%%%%%%%%%%%%%%%%%%%%%%%%%%%%%%%%%%%
Let $(\pi,L)$ be a generic $H_n(q)$-representation. For $w\in S_n$ we define a $F$-linear map
$\mathbb{M}^{\pi}(w): F\otimes L\rightarrow F\otimes L$ as follows,
\[
\mathbb{M}^\pi(w)=\bigl(\bigl(S^\pi\bigr)^{-1}\nabla^\pi(w)S^\pi\bigr)\circ \sigma_L(w^{-1}).
\]
The linear operators $\mathbb{M}^\pi(w)$ ($w\in S_n$) form a $S_n$-cocycle, called the {\it monodromy cocycle} of $(\pi,L)$. 

If $T: L\rightarrow L^\prime$ is a morphism between generic
$H_n(q)$-modules $(\pi,L)$ and $(\pi^\prime,L^\prime)$ then
\begin{equation}\label{functorialityM}
T\circ \mathbb{M}^\pi(w)=\mathbb{M}^{\pi^\prime}(w)\circ T,\qquad w\in S_n
\end{equation}
as $F$-linear maps $F\otimes L\rightarrow F\otimes L^\prime$ by Remark \ref{functoriality}.

 With respect to a choice of linear basis $\{v_i\}_i$ of $L$
consisting of common eigenvectors of the $\pi(\widetilde{Y}^\lambda)$ ($\lambda\in\mathbb{Z}^n$) we obtain from the monodromy 
cocycle matrices with coefficients in $F$, called connection matrices. The cocycle property implies braid-like relations for the connection matrices. 
We will analyse the connection matrices for the spin representation of $H_n(q)$ associated to the vector representation of
$\mathcal{U}_q(\mathfrak{gl}(2|1))$. It leads to explicit solutions of dynamical quantum Yang-Baxter equations.

%%%%%%%%%%%%%%%%%%%%%%%%%%%%%%%%%%%%%%%%%%%%%
\section{Connection matrices for principal series modules}\label{Sec4}
%%%%%%%%%%%%%%%%%%%%%%%%%%%%%%%%%%%%%%%%%%%%%%
First we recall the explicit form of the monodromy cocycle for a generic principal series module
$M^{I,\epsilon}(\gamma)$ ($\gamma\in E^{I,\epsilon}$), see \cite{S1} for the special case $\epsilon_i=+$ for all $i$ and the appendix for the general case.
Fix the normalised linear basis $\{\widetilde{b}_\sigma\}_{\sigma\in S_n^I}$ of $M^{I,\epsilon}(\gamma)$, given by \eqref{lc}, specialised to the $\textup{GL}_n$ root datum
and with $q$ in \eqref{lc} replaced by $p$. The basis elements are common eigenvectors for the action of $\widetilde{Y}^\lambda$  ($\lambda\in\mathbb{Z}^n$).
We write for $w\in S_n$,
\begin{equation}\label{mdefalt}
\mathbb{M}^{\pi_\gamma^{I,\epsilon}}(w)\widetilde{b}_{\tau_2}=\sum_{\tau_1\in S_n^I}m_{\tau_1\tau_2}^{I,\epsilon,w}(\mathbf{z};\gamma)
\widetilde{b}_{\tau_1}\qquad \forall\,\tau_2\in S_n^I
\end{equation}
with $m_{\tau_1\tau_2}^{I,\epsilon,w}(\mathbf{z};\gamma)\in F$ (as function of $\mathbf{z}$). For $w=s_i$ ($1\leq i<n$) the
coefficients are explicitly
given in terms of the elliptic functions $A^y(x), B^y(x)$ and the elliptic $c$-function
$c(x)$ (see \eqref{AB} and \eqref{C}) as follows.
\begin{enumerate}
\item[{\bf (a)}] On the diagonal, 
\begin{equation}\label{explicitdiagonal}
\begin{split}
m^{I,\epsilon,s_i}_{\sigma,\sigma}(\mathbf{z};\gamma)&=\epsilon_{i_\sigma}\frac{c(z_i-z_{i+1})}{c(\epsilon_{i_\sigma}(z_i-z_{i+1}))}
\,\qquad\qquad\qquad \hbox{ if }\quad \sigma\in S_n^I \,\, \& \,\, s_{n-i}\sigma\not\in S_n^I,\\
m^{I,\epsilon,s_i}_{\sigma,\sigma}(\mathbf{z};\gamma)&=A^{\gamma_{\sigma^{-1}(n-i)}-\gamma_{\sigma^{-1}(n-i+1)}}(z_i-z_{i+1}) \quad \,\,\hbox{ if } \quad \sigma\in S_n^I \,\, 
\&\,\, s_{n-i}\sigma\in S_n^I
\end{split}
\end{equation}
where,  if $\sigma\in S_n^I$ and $s_{n-i}\sigma\not\in S_n^I$, we write $i_\sigma\in I$ for the unique index such that $s_{n-i}\sigma=\sigma s_{i_\sigma}$.
\item[{\bf (b)}] All off-diagonal matrix entries are zero besides $m^{I,\epsilon,s_i}_{s_{n-i}\sigma,\sigma}(\mathbf{z};\gamma)$ with both $\sigma\in S_n^I$ and $s_{n-i}\sigma\in S_n^I$, which is given by
\begin{equation}\label{explicitoffdiagonal}
m^{I,\epsilon,s_i}_{s_{n-i}\sigma,\sigma}(\mathbf{z};\gamma)=B^{\gamma_{\sigma^{-1}(n-i)}-\gamma_{\sigma^{-1}(n-i+1)}}(z_i-z_{i+1}).
\end{equation}
\end{enumerate}
For $w\in S_n$ we write
\begin{equation}\label{Cexpl}
\mathbb{M}^{I,\epsilon,w}(\mathbf{z};\gamma)=
\bigl(m^{I,\epsilon,w}_{\sigma,\tau}(\mathbf{z};\gamma)\bigr)_{\sigma,\tau\in S_n^I} 
\end{equation}
for the matrix of $\mathbb{M}^{\pi_\gamma^{I,\epsilon}}(w)$ with respect to 
the $F$-linear basis $\{\widetilde{b}_\sigma\}_{\sigma\in S_n^I}$. 
The cocycle property of the monodromy cocycle then becomes
\[
\mathbb{M}^{I,\epsilon,ww^\prime}(\mathbf{z};\gamma)=\mathbb{M}^{I,\epsilon,w}(\mathbf{z};\gamma)
\mathbb{M}^{I,\epsilon,w^\prime}(w^{-1}\mathbf{z};\gamma) \qquad \forall\, w,w^\prime\in S_n
\]
and $\mathbb{M}^{I,\epsilon,e}(\mathbf{z};\gamma)=1$, where the symmetric group acts by permuting the variables $\mathbf{z}$. 
As a consequence, one directly obtains the following result.
%%%%%%%%%%%%%%%%%%%%%%%%%%%%%%%%%%%%%%%%%%%%%%%
\begin{prop}
The matrices \eqref{Cexpl} satisfy the braid type equations
\begin{equation}\label{YBconnection}
\begin{split}
\mathbb{M}^{I,\epsilon,s_i}(\mathbf{z};\gamma)\mathbb{M}^{I,\epsilon,s_{i+1}}(s_i\mathbf{z};\gamma)&\mathbb{M}^{I,\epsilon,s_i}(s_{i+1}s_i\mathbf{z};\gamma)=\\
&=\mathbb{M}^{I,\epsilon,s_{i+1}}(\mathbf{z};\gamma)\mathbb{M}^{I,\epsilon,s_i}(s_{i+1}\mathbf{z};\gamma)\mathbb{M}^{I,\epsilon,s_{i+1}}(s_is_{i+1}\mathbf{z};\gamma)
\end{split}
\end{equation}
for $1\leq i<n-1$ and the unitarity relation
\begin{equation}\label{unitarity}
\mathbb{M}^{I,\epsilon,s_i}(\mathbf{z};\gamma)\mathbb{M}^{I,\epsilon,s_i}(s_i\mathbf{z};\gamma)=1
\end{equation}
for $1\leq i<n$.
\end{prop}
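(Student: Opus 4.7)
The proposition is a direct consequence of the cocycle property of the monodromy cocycle that was recorded just before the statement, combined with the braid relation and the involution property of the simple reflections in $S_n$. The plan is to unfold the cocycle formula along the two reduced expressions of a common element of $S_n$.

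For the braid-type identity \eqref{YBconnection}, I would start from the cocycle relation
\[
\mathbb{M}^{I,\epsilon,ww^\prime}(\mathbf{z};\gamma)=\mathbb{M}^{I,\epsilon,w}(\mathbf{z};\gamma)\,\mathbb{M}^{I,\epsilon,w^\prime}(w^{-1}\mathbf{z};\gamma)
\]
and iterate it on the reduced word $s_is_{i+1}s_i$. Using $s_i^{-1}=s_i$, two applications yield
\[
\mathbb{M}^{I,\epsilon,s_is_{i+1}s_i}(\mathbf{z};\gamma)=\mathbb{M}^{I,\epsilon,s_i}(\mathbf{z};\gamma)\,\mathbb{M}^{I,\epsilon,s_{i+1}}(s_i\mathbf{z};\gamma)\,\mathbb{M}^{I,\epsilon,s_i}(s_{i+1}s_i\mathbf{z};\gamma),
\]
which is the left-hand side of \eqref{YBconnection}. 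Running the same computation on the reduced word $s_{i+1}s_is_{i+1}$ produces the right-hand side. The braid relation $s_is_{i+1}s_i=s_{i+1}s_is_{i+1}$ in $S_n$ identifies the two left-hand sides, giving the identity.

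For the unitarity relation \eqref{unitarity}, I would apply the cocycle formula to the trivial factorisation $e=s_is_i$ and use $\mathbb{M}^{I,\epsilon,e}(\mathbf{z};\gamma)=1$ together with $s_i^{-1}=s_i$ to conclude
\[
1=\mathbb{M}^{I,\epsilon,s_i}(\mathbf{z};\gamma)\,\mathbb{M}^{I,\epsilon,s_i}(s_i\mathbf{z};\gamma).
\]

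There is essentially no obstacle: all the analytic and representation-theoretic content is already encoded in the explicit expressions \eqref{explicitdiagonal}--\eqref{explicitoffdiagonal} and in the fact that $\mathbb{M}^{\pi_\gamma^{I,\epsilon}}$ defines a cocycle for the action of $S_n$ on $F\otimes M^{I,\epsilon}(\gamma)$ by $\nabla^{\pi_\gamma^{I,\epsilon}}|_{S_n}$ composed with the inverse of the permutation action on $\mathbf{z}$. The only point that requires a moment's care is the tracking of which of the arguments $\mathbf{z}$, $s_i\mathbf{z}$, $s_{i+1}s_i\mathbf{z}$, $s_is_{i+1}\mathbf{z}$ appear, but this is entirely dictated by the order in which the simple reflections are applied through the cocycle formula and matches the displayed form of \eqref{YBconnection} and \eqref{unitarity}.
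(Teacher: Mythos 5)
Your proof is correct and is precisely what the paper means by "as a consequence, one directly obtains the following result": you unfold the cocycle relation along the two reduced expressions $s_is_{i+1}s_i$ and $s_{i+1}s_is_{i+1}$ of the same element to get \eqref{YBconnection}, and along $e=s_is_i$ to get \eqref{unitarity}, using only $s_j^{-1}=s_j$ and $\mathbb{M}^{I,\epsilon,e}=1$. The bookkeeping of the arguments $s_i\mathbf{z}$, $s_{i+1}s_i\mathbf{z}$, etc.\ is handled exactly as the cocycle formula dictates, so nothing is missing.
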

%%%%%%%%%%%%%%%%%%%%%%%%%%%%%%%%%%%%%%%%%%%%%%%

In this paper we want to obtain explicit elliptic solutions of dynamical quantum Yang-Baxter equations
by computing connection matrices of a particular spin representation $\bigl(\pi_{\mathcal{B},D},W^{\otimes n}\bigr)$.
To relate $\mathbb{M}^{\pi_{\mathcal{B},D}}(s_i)$ to elliptic solutions of quantum dynamical Yang-Baxter equations acting locally on
the $i$th and $(i+1)$th tensor legs of $W^{\otimes n}$ one needs to compute the matrix coefficients of $M^{\pi_{\mathcal{B},D}}(s_i)$ with 
respect to a suitable tensor product basis $\{v_{i_1}\otimes\cdots\otimes v_{i_N}\}$ of $W^{\otimes n}$, where $\{v_i\}_i$ is some linear basis of $W$.

The approach is as follows. Suppose we have an explicit isomorphism of $H_n(q)$-modules 
\begin{equation}\label{decomposition}
T: W^{\otimes n}\overset{\sim}{\longrightarrow}\bigoplus_kM^{I^{(k)},\epsilon^{(k)}}(\gamma^{(k)}).
\end{equation}
Writing $\pi^{(k)}$ for the representation map of $M^{I^{(k)},\epsilon^{(k)}}(\gamma^{(k)})$ we conclude from
Remark \ref{functoriality} that the corresponding monodromy cocycles are related by
\begin{equation}\label{relspinprin}
\mathbb{M}^{\pi_{\mathcal{B},D}}(w)=T^{-1}\circ\Bigl(\bigoplus_{k}\mathbb{M}^{\pi^{(k)}}
(w)\Bigr)\circ T
\end{equation}
as $F$-linear endomorphisms of $F\otimes L$. If $\{v_i\}_i$ is a linear basis of $W$ and $\{v_{i_1}\otimes\cdots\otimes v_{i_N}\}$
the corresponding tensor product basis of $W^{\otimes n}$, then in general $T$ will not map the tensor product basis onto
the union (over $k$) of the linear bases $\{\widetilde{b}_\sigma\}_{\sigma\in I^{(k)}}$ of the constituents 
$M^{I^{(k)},\epsilon^{(k)}}(\gamma^{(k)})$ in \eqref{decomposition}. Thus trying to explicitly compute $\mathbb{M}^{\pi_{\mathcal{B},D}}(s_i)$ with
respect to a tensor product basis, using \eqref{relspinprin} and using the explicit form of  $\mathbb{M}^{\pi^{(k)}}(s_i)$ with respect to $\{\widetilde{b}_\sigma\}_{\sigma\in I^{(k)}}$,
will become cumbersome.

The way out is as follows. As soon as we know the existence of an isomorphism \eqref{decomposition} of $H_n(q)$-modules, we can
try to modify $T$ to obtain an explicit complex linear isomorphism
\[
\widetilde{T}: W^{\otimes n}
\overset{\sim}{\longrightarrow}\bigoplus_kM^{I^{(k)},\epsilon^{(k)}}(\gamma^{(k)})
\]
({\it not} an intertwiner of $H_n(q)$-modules!), which does have the property that a tensor product basis of $W^{\otimes n}$ is mapped to
the basis of the direct sum of principal series blocks consisting of the union of the bases
$\{\widetilde{b}_\sigma\}_{\sigma\in S_n^{I^{(k)}}}$. As soon as $\widetilde{T}$ is constructed, we 
can define the {\it modified monodromy cocycle}
$\{\widetilde{\mathbb{M}}^{\pi_{\mathcal{B},D}}(w)\}_{w\in S_n}$ of the spin representation $\pi_{\mathcal{B},D}$ by
\[
\widetilde{\mathbb{M}}^{\pi_{\mathcal{B},D}}(w):=\widetilde{T}^{-1}\circ\Bigl(\bigoplus_{k}
\mathbb{M}^{\pi^{(k)}}(w)\Bigr)\circ \widetilde{T},\qquad w\in S_n
\]
(clearly the $\widetilde{\mathbb{M}}^{\pi_{\mathcal{B},D}}(w)$ still form a $S_n$-cocycle).
Then the matrix of $\widetilde{\mathbb{M}}^{\pi_{\mathcal{B},D}}(s_i)$ with respect to the tensor product basis of $W^{\otimes n}$
will lead to an explicit solution of the dynamical quantum Yang-Baxter equation on $W\otimes W$ with spectral parameters.

We will apply this method for 
the spin representation associated to the $\mathcal{U}_q(\widehat{\mathfrak{gl}}(2|1))$ Perk-Schultz model in the next section.

%%%%%%%%%%%%%%%%%%%%%%%%%
\begin{rema}
The linear isomorphism $\widetilde{T}$ in the example treated in the next section is of the form
\[
\widetilde{T}=\bigl(\bigoplus_kG^{(k)}\bigr)\circ T
\]
with $T$ an isomorphism of $H_n(q)$-modules and with
$G^{(k)}$ the linear automorphism of $M^{I^{(k)},\epsilon^{(k)}}(\gamma^{(k)})$ mapping the standard basis element
$v_{\sigma}^{I^{(k)},\epsilon^{(k)}}(\gamma^{(k)})$ to a suitable constant multiple of $\widetilde{b}_\sigma$ for all $\sigma\in S_n^{I^{(k)}}$.
\end{rema}

%%%%%%%%%%%%%%%%%%%%%%%%%%%%%%%%%%%%%%%%%%%%%%%%%%%%%%%%%%%%%%%%%%%%%%%%
\section{The spin representation associated to $\mathfrak{gl}(2|1)$}\label{Sec5}
%%%%%%%%%%%%%%%%%%%%%%%%%%%%%%%%%%%%%%%%%%%%%%%%%%%%%%%%%%%%%%%%%%%%%%%%
Recall the vector representation $V=V_{\overline{0}}\oplus V_{\overline{1}}$ with $V_{\overline{0}}=\mathbb{C}v_1\oplus\mathbb{C}v_2$ and 
$V_{\overline{1}}=\mathbb{C}v_3$ of the Lie super algebra $\mathfrak{gl}(V)\simeq\mathfrak{gl}(2|1)$. The vector representation can be quantized, leading to 
the vector representation of the quantized universal enveloping algebra $\mathcal{U}_q(\mathfrak{gl}(2|1))$ on the same vector space $V$.
The action of the universal $R$-matrix of $\mathcal{U}_q(\mathfrak{gl}(2|1))$ on $V\otimes V$
gives rise to an explicit solution $\mathcal{B}: V\otimes V\rightarrow V\otimes V$ 
of the braid algebra relation \eqref{braid}. With respect to the ordered basis \eqref{orderedbasis} 
it is explicitly given by
\begin{equation} \label{sl21}
\mathcal{B}:=
\left(\begin{matrix} q & 0 & 0 & 0 & 0 & 0 & 0 & 0 & 0\\
0 & q-q^{-1} & 0 & 1 & 0 & 0 & 0 & 0 & 0\\
0 & 0 & q-q^{-1} & 0 & 0 & 0 & -1 & 0 & 0\\
0 & 1 & 0 & 0 & 0 & 0 & 0 & 0 & 0\\
0 & 0 & 0 & 0 & q & 0 & 0 & 0 & 0\\
0 & 0 & 0 & 0 & 0 & q-q^{-1} & 0 & -1 & 0\\
0 & 0 & -1 & 0 & 0 & 0 & 0 & 0 & 0\\
0 & 0 & 0 & 0 & 0 & -1 & 0 & 0 & 0\\
0 & 0 & 0 & 0 & 0 & 0 & 0 & 0 & -q^{-1}
\end{matrix}\right),
\end{equation}
see \cite{CK, DA} (we also refer the reader to \cite{KT, Y}). It satisfies the Hecke relation \eqref{HeckeRelation}. 

The Baxterization $R^{\mathcal{B}}(z)$ of $\mathcal{B}$ gives
\begin{eqnarray} 
\begin{split} \label{rmat}
R^{\mathcal{B}}(z)=
\left(\begin{matrix} a(z) & 0 & 0 & 0 & 0 & 0 & 0 & 0 & 0\\
0 & b(z) & 0 & c_+(z) & 0 & 0 & 0 & 0 & 0\\
0 & 0 & -b(z) & 0 & 0 & 0 & c_+(z) & 0 & 0\\
0 & c_-(z) & 0 & b(z) & 0 & 0 & 0 & 0 & 0\\
0 & 0 & 0 & 0 & a(z) & 0 & 0 & 0 & 0\\
0 & 0 & 0 & 0 & 0 & -b(z) & 0 & c_+(z) & 0\\
0 & 0 & c_-(z) & 0 & 0 & 0 & -b(z) & 0 & 0\\
0 & 0 & 0 & 0 & 0 & c_-(z) & 0 & -b(z) & 0\\
0 & 0 & 0 & 0 & 0 & 0 & 0 & 0 & w(z)
\end{matrix}\right) 
\end{split}
\end{eqnarray}
with 
\[
a(z):=\frac{q^{-1}-qz}{q^{-1}-qz},\quad b(z):=\frac{1-z}{q^{-1}-qz}, \quad c_+(z):=\frac{q^{-1}-q}{q^{-1}-qz}, \quad c_-(z):=\frac{(q^{-1}-q)z}{q^{-1}-qz} 
\]
and $w(z):=\frac{q^{-1}z-q}{q^{-1}-qz}$. Note that $P\circ \mathcal{B}$ can be re-obtained from $R^{\mathcal{B}}(z)$ by taking the braid limit $z\rightarrow\infty$.
The same solution $R^{\mathcal{B}}(z)$ is among the ones found by Perk and Schultz through the direct resolution of the quantum Yang-Baxter equation \eqref{qYBB}, see
\cite{PS}. It can also be obtained from the $\mathcal{U}_q(\widehat{\mathfrak{gl}}(2|1))$
invariant $R$-matrix with spectral parameter, see for instance \cite{CK, DA, KSU, BT, DGLZ, YZ}. 
Due to that the associated integrable vertex model is commonly refereed to as $\mathcal{U}_q(\widehat{\mathfrak{gl}}(2|1))$
Perk-Schultz model. Also, it is worth remarking that $R^{\mathcal{B}}(z)$ gives rise to 
a $q$-deformed version of the supersymmetric \textsc{t-j} model \cite{Suz, Bar}.

%%%%%%%%%%%%%%%%%%%%%%%%%%%%%%%%%%
\begin{rema}
We have written the $R$-matrix \eqref{rmat} in such a way that it satisfies the quantum Yang-Baxter
equation with standard tensor products. In order to make the gradation of $V$ explicitly manifested, and thus having
a solution of the graded Yang-Baxter equation as described in \cite{K}, we need to consider the matrix 
$\bar{R}^{\mathcal{B}} := P_g \, P \, R^{\mathcal{B}}$ where $P_g$ stands for the graded permutation operator.
\end{rema}
%%%%%%%%%%%%%%%%%%%%%%%%%%%%%%%%%%%%
\begin{rema}
Due to small differences of conventions and grading, one also needs to consider a simple gauge transformation in order to
compare \eqref{rmat} with the results presented in \cite{CK, DA, DGLZ, YZ}.
\end{rema}
%%%%%%%%%%%%%%%%%%%%%%%%%%%%%%%%%%%%

In order to proceed, we consider the linear map $D_{\underline{\phi}}: V\rightarrow V$ for a three-tuple $\underline{\phi}:=(\phi_1,\phi_2,\phi_3)$ of complex numbers defined by
\[
D_{\underline{\phi}}(v_i):=p^{-\phi_i}v_i,\qquad i=1,2,3 \, .
\]
It satisfies $\lbrack D_{\underline{\phi}}\otimes D_{\underline{\phi}}, \mathcal{B}\rbrack=0$. We write
$\pi_{\mathcal{B},\underline{\phi}}: H_n(q)\rightarrow\textup{End}(V)$ for the resulting spin representation 
$\pi_{\mathcal{B},D_{\underline{\phi}}}$. 

We are interested in computing the connection matrices of the quantum affine KZ equations associated to the spin representation $\pi_{\mathcal{B},\underline{\phi}}$.
With this goal in mind we firstly decompose the spin representation explicitly as direct sum of principal series modules. 

Let 
\[
\mathcal{K}_n:=\{\alpha:=(\alpha_1,\ldots,\alpha_n) \,\, | \,\, \alpha_i\in\{1,2,3\} \}
\]
and write 
$\mathbf{v}_\alpha:=v_{\alpha_1}\otimes v_{\alpha_2}\otimes\cdots\otimes v_{\alpha_n}\in V^{\otimes n}$ for $\alpha\in\mathcal{K}_n$.
We will refer to $\{\mathbf{v}_\alpha\}_{\alpha\in\mathcal{K}_n}$ as the {\it tensor product basis} of $V^{\otimes n}$. Next write 
\[
\mathcal{J}_n:=\{\mathbf{r}=(r_1,r_2,r_3)\in\mathbb{Z}_{\geq 0}^3 \,\, | \,\, r_1+r_2+r_3=n\}.
\]
Write $\mathcal{K}_n[\mathbf{r}]$ for the subset of $n$-tuples $\alpha\in\mathcal{K}_n$ with $r_j$ entries equal to $j$ for $j=1,2,3$. For instance,
\begin{equation}\label{alphar}
\alpha^{(\mathbf{r})}:=\bigl(\underbrace{3,\ldots,3}_{r_3},\underbrace{2,\ldots,2}_{r_2},\underbrace{1,\ldots,1}_{r_1}\bigr)\in\mathcal{K}_n[\mathbf{r}]
\end{equation}

Write $(V^{\otimes n})_{\mathbf{r}}:=\textup{span}\{\mathbf{v}_\alpha \,\, | \,\, \alpha\in\mathcal{K}_n[\mathbf{r}]\}$,
so that 
\[V^{\otimes n}=\bigoplus_{\mathbf{r}\in\mathcal{J}_n}(V^{\otimes n})_{\mathbf{r}}.
\]
%%%%%%%%%%%%%%%%%%%%%%%%%%%%%%%%%%%%%
\begin{lem}
$(V^{\otimes n})_{\mathbf{r}}$ is a $H_n(q)$-submodule of the spin representation $(\pi_{\mathcal{B},\underline{\phi}},V^{\otimes n})$.
\end{lem}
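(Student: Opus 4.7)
The plan is to check that the subspace $(V^{\otimes n})_{\mathbf{r}}$ is preserved by the image under $\pi_{\mathcal{B},\underline{\phi}}$ of each generator of $H_n(q)$, that is, by $\mathcal{B}_{i,i+1}$ for $1\leq i<n$ and by $\pi_{\mathcal{B},\underline{\phi}}(\zeta)=P_{12}P_{23}\cdots P_{n-1,n}(D_{\underline{\phi}})_n$. Since $\mathcal{J}_n$ just records the number of occurrences of each basis vector $v_1,v_2,v_3$ in a simple tensor, it suffices to observe that both generators only permute the tensor legs of $V^{\otimes n}$ and, at worst, mix $v_i\otimes v_j$ with $v_j\otimes v_i$ in neighbouring legs — operations that manifestly preserve the multiplicity vector $\mathbf{r}$.

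First I would treat the generators $T_i$. Inspecting the explicit form \eqref{sl21} of $\mathcal{B}$, one sees that $\mathcal{B}(v_i\otimes v_i)\in\mathbb{C}(v_i\otimes v_i)$ and, for $i\not=j$, $\mathcal{B}(v_i\otimes v_j)\in\mathbb{C}(v_i\otimes v_j)+\mathbb{C}(v_j\otimes v_i)$. Consequently, for any $\alpha\in\mathcal{K}_n[\mathbf{r}]$, the vector $\mathcal{B}_{i,i+1}\mathbf{v}_\alpha$ is a linear combination of $\mathbf{v}_\alpha$ and $\mathbf{v}_{s_i\alpha}$, where $s_i\alpha$ denotes the sequence with the $i$th and $(i+1)$th entries of $\alpha$ interchanged. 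Both of these multi-indices lie in $\mathcal{K}_n[\mathbf{r}]$, since swapping two entries of $\alpha$ leaves invariant the number of occurrences of each value in $\{1,2,3\}$. Hence $\pi_{\mathcal{B},\underline{\phi}}(T_i)(V^{\otimes n})_{\mathbf{r}}\subseteq (V^{\otimes n})_{\mathbf{r}}$.

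Next I would treat $\zeta$. The operator $(D_{\underline{\phi}})_n$ is diagonal in the tensor product basis and sends $\mathbf{v}_\alpha$ to $p^{-\phi_{\alpha_n}}\mathbf{v}_\alpha$, so it clearly preserves each $(V^{\otimes n})_{\mathbf{r}}$. The composition $P_{12}P_{23}\cdots P_{n-1,n}$ sends $\mathbf{v}_\alpha = v_{\alpha_1}\otimes\cdots\otimes v_{\alpha_n}$ to $\mathbf{v}_{\alpha'}$ where $\alpha'=(\alpha_n,\alpha_1,\ldots,\alpha_{n-1})$ is a cyclic shift of $\alpha$; this is again a permutation of the entries of $\alpha$, so $\alpha'\in\mathcal{K}_n[\mathbf{r}]$ whenever $\alpha\in\mathcal{K}_n[\mathbf{r}]$. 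Combining, $\pi_{\mathcal{B},\underline{\phi}}(\zeta)(V^{\otimes n})_{\mathbf{r}}\subseteq (V^{\otimes n})_{\mathbf{r}}$, which completes the verification. There is no genuine obstacle: the result is essentially a bookkeeping statement, reflecting the fact that $\mathcal{B}$ is weight-preserving with respect to the natural $\mathbb{Z}^3$-grading of $V^{\otimes n}$ coming from the Cartan subalgebra $\mathfrak{h}$, and that the quasi-cyclic twist $\zeta$ acts by a cyclic permutation combined with a diagonal operator.
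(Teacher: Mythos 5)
Your proof is correct and is essentially the paper's argument made explicit: the paper invokes $\lbrack D_{\underline{\theta}}\otimes D_{\underline{\theta}},\mathcal{B}\rbrack=0$ for all $\underline{\theta}$, which is precisely the fact that $\mathcal{B}$ preserves the $\mathbb{Z}^3$-grading by occurrence multiplicities, and you verify this directly from the matrix form of $\mathcal{B}$ together with the observation that the cyclic shift and $(D_{\underline{\phi}})_n$ in $\pi_{\mathcal{B},\underline{\phi}}(\zeta)$ also preserve it.
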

\begin{proof}
This follows immediately from the definition of the spin representation and the fact that $\lbrack D_{\underline{\theta}}\otimes D_{\underline{\theta}},\mathcal{B}\rbrack=0$
for all $\underline{\theta}\in\mathbb{C}^3$.
\end{proof}
%%%%%%%%%%%%%%%%%%%%%%%%%%%%%%%%%%%%%%%%%
The permutation action $\alpha\mapsto w\alpha$ of $S_n$ on 
$\mathcal{K}_n[\mathbf{r}]$, where $(w\alpha)_i:=\alpha_{w^{-1}(i)}$ for $1\leq i\leq n$, is transitive. The stabiliser subgroup of $\alpha^{(\mathbf{r})}$ (see \eqref{alphar})
is $S_{n,I^{(\mathbf{r})}}$ with $I^{(\mathbf{r})}\subseteq \{1,\ldots,n-1\}$ the subset 
\[
I^{(\mathbf{r})}:=\{1,\ldots,n-1\}\setminus \bigl(\{r_3,r_2+r_3\}\cap\{1,\ldots,n-1\}\bigr)
\]
For instance, if $1\leq r_3,r_2+r_3<n$ then 
\[
I^{(\mathbf{r})}=\{1,\ldots,r_3-1\}\cup\{r_3+1,\ldots,r_3+r_2-1\}\cup\{r_3+r_2+1,\ldots,n-1\},
\]
while $I^{(\mathbf{r})}=\{1,\ldots,n-1\}$ if $r_j=n$ for some $j$.
The assignment $w\mapsto w\alpha^{(\mathbf{r})}$ thus gives rise to a bijective map 
\[
\Sigma^{(\mathbf{r})}: S_n^{I^{(\mathbf{r})}}\overset{\sim}{\longrightarrow}\mathcal{K}_n[\mathbf{r}].
\]
Its inverse can be described as follows. For $\alpha\in\mathcal{K}_n[\mathbf{r}]$ and $j\in\{1,2,3\}$ write 
\[
1\leq k_1^{\alpha,(j)}<k_2^{\alpha,(j)}<\cdots<k_{r_j}^{\alpha,(j)}\leq n
\]
for indices $k$ such that $\alpha_{k}=j$ and denote
\begin{equation}\label{Inverse}
w_\alpha:=\left(\begin{matrix} 1 & \cdots & r_3 & r_3+1 & \cdots & r_3+r_2 & r_3+r_2+1 & \cdots & n\\
k_1^{\alpha,(3)} & \cdots & k_{r_3}^{\alpha, (3)} & k_{1}^{\alpha, (2)} & \cdots & k_{r_2}^{\alpha,(2)} & k_1^{\alpha,(1)} & \cdots & k_{r_1}^{\alpha, (1)}
\end{matrix}\right)\in S_n
\end{equation}
in standard symmetric group notations. Note that $w_\alpha\alpha^{(\mathbf{r})}=\alpha$. In addition, $w_\alpha\in S_n^{I^{(\mathbf{r})}}$ since 
\[
l(w_{\alpha}s_i)>
l(w_{\alpha})\qquad\forall\,  i\in I^{(\mathbf{r})},
\]
which is a direct consequence of the well known length formula
\begin{equation}\label{length}
l(w)=\# \{ (i,j) \,\, | \,\, 1\leq i<j\leq n \,\,\,\, \& \,\,\,\, w(i)>w(j)\}.
\end{equation}
It follows that 
\begin{equation*}
\bigl(\Sigma^{(\mathbf{r})}\bigr)^{-1}(\alpha)=w_\alpha,\qquad \forall\, \alpha\in\mathcal{K}_n[\mathbf{r}].
\end{equation*}

Let $\epsilon^{(\mathbf{r})}=
\{\epsilon^{(\mathbf{r})}_i\}_{i\in I^{(\mathbf{r})}}$
be given by
\begin{equation*}
\epsilon^{(\mathbf{r})}_i:=
\begin{cases}
-\quad &\hbox{ if } i<r_3 \\
+ \quad &\hbox{ else} \, ,
\end{cases}
\end{equation*}
and define $\gamma^{(\mathbf{r})}\in E^{I^{(\mathbf{r})},\epsilon^{(\mathbf{r})}}$ as
\begin{equation*}
\gamma_i^{(\mathbf{r})}:=
\begin{cases}
\eta_3^{(\mathbf{r})}+\phi_3+2i\kappa,\qquad &\hbox{ if }\,\, i\leq r_3,\\
\eta_2^{(\mathbf{r})}+\phi_2-2(i-r_3)\kappa,\qquad &\hbox{ if }\,\, r_3<i\leq r_3+r_2,\\
\eta_1^{(\mathbf{r})}+\phi_1-2(i-r_2-r_3)\kappa,\qquad &\hbox{ if }\,\, r_3+r_2<i\leq n \, ,
\end{cases}
\end{equation*}
with $\eta_j^{(\mathbf{r})}\in\mathbb{C}$ ($j=1,2,3$) given by
\begin{equation}\label{eta}
\begin{split}
\eta_1^{(\mathbf{r})}&:=-\pi\sqrt{-1}r_3\log(p)^{-1}+(r_1+1)\kappa,\\
\eta_2^{(\mathbf{r})}&:=-\pi\sqrt{-1}r_3\log(p)^{-1}+(r_2+1)\kappa,\\
\eta_3^{(\mathbf{r})}&:=-\pi\sqrt{-1}(n-1)\log(p)^{-1}-(r_3+1)\kappa.
\end{split}
\end{equation}

%%%%%%%%%%%%%%%%%%%%%%%%%%%%%%%%%%%%%%%%%%%%%%%%%%%
\begin{prop}\label{blockiso}
Let $\mathbf{r}\in\mathcal{J}_n$.
For generic parameters, there exists a unique isomorphism $\psi^{(\mathbf{r})}: M^{I^{(\mathbf{r})},\epsilon^{(\mathbf{r})}}(\gamma^{(\mathbf{r})})\overset{\sim}{\longrightarrow}
(V^{\otimes n})_{\mathbf{r}}$ of $H_n(q)$-modules 
mapping the cyclic vector
$v^{I^{(\mathbf{r})},\epsilon^{(\mathbf{r})}}(\gamma^{(\mathbf{r})})$ to
\[
\mathbf{v}_{\alpha^{(\mathbf{r})}}=v_3^{\otimes r_3}\otimes v_2^{\otimes r_2}\otimes v_1^{\otimes r_1}\in (V^{\otimes n})_{\mathbf{r}}.
\]
Furthermore, for $w\in S_n^{I^{(\mathbf{r})}}$ we have
\begin{equation}\label{standardtotensor}
\psi^{(\mathbf{r})}\bigl(v_w^{I^{(\mathbf{r})},\epsilon^{(\mathbf{r})}}(\gamma^{(\mathbf{r})})\bigr)=(-1)^{\eta(w)}\mathbf{v}_{w\alpha^{(\mathbf{r})}}
\end{equation}
with
\begin{equation}\label{etanew}
\eta(w):=\#\{(i,j) \,\, | \,\, 1\leq j<r_3<i\leq n \,\,\, \& \,\,\, w(i)<w(j)\}.
\end{equation}
\end{prop}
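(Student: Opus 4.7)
The plan is to use Frobenius reciprocity and then carry out explicit computations on the tensor product basis. Since $M^{I^{(\mathbf{r})},\epsilon^{(\mathbf{r})}}(\gamma^{(\mathbf{r})})$ is induced from the one-dimensional $H_{I^{(\mathbf{r})}}(q)$-module attached to $\chi_{\gamma^{(\mathbf{r})}}^{I^{(\mathbf{r})},\epsilon^{(\mathbf{r})}}$, the existence and uniqueness of an $H_n(q)$-intertwiner $\psi^{(\mathbf{r})}$ sending $v^{I^{(\mathbf{r})},\epsilon^{(\mathbf{r})}}(\gamma^{(\mathbf{r})})$ to $\mathbf{v}_{\alpha^{(\mathbf{r})}}$ is equivalent to the two eigenvalue conditions $\pi_{\mathcal{B},\underline{\phi}}(T_i)\mathbf{v}_{\alpha^{(\mathbf{r})}}=\epsilon^{(\mathbf{r})}_i q^{\epsilon^{(\mathbf{r})}_i}\mathbf{v}_{\alpha^{(\mathbf{r})}}$ for $i\in I^{(\mathbf{r})}$ and $\pi_{\mathcal{B},\underline{\phi}}(Y_j)\mathbf{v}_{\alpha^{(\mathbf{r})}}=p^{-\gamma^{(\mathbf{r})}_j}\mathbf{v}_{\alpha^{(\mathbf{r})}}$ for $1\leq j\leq n$; uniqueness of $\psi^{(\mathbf{r})}$ is then automatic from the cyclicity of $v^{I^{(\mathbf{r})},\epsilon^{(\mathbf{r})}}(\gamma^{(\mathbf{r})})$.

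The $T_i$-eigenvalue conditions are immediate from the diagonal entries of $\mathcal{B}$ in \eqref{sl21}: $\mathcal{B}(v_j\otimes v_j)=qv_j\otimes v_j$ for bosonic $j\in\{1,2\}$ and $\mathcal{B}(v_3\otimes v_3)=-q^{-1}v_3\otimes v_3$, which match $\epsilon^{(\mathbf{r})}_i q^{\epsilon^{(\mathbf{r})}_i}$ in the three ranges of $i\in I^{(\mathbf{r})}$ corresponding to the three blocks of equal entries in $\alpha^{(\mathbf{r})}$. The $Y_j$-eigenvalue conditions are established by direct computation using the factorisation $Y_j=T_{j-1}^{-1}\cdots T_1^{-1}\zeta T_{n-1}\cdots T_j$ together with $\pi_{\mathcal{B},\underline{\phi}}(\zeta)=P_{12}\cdots P_{n-1,n}D_{\underline{\phi},n}$. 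Three cases must be considered depending on whether $\alpha^{(\mathbf{r})}_j$ equals $3$, $2$ or $1$; in each case one tracks how $T_{n-1}\cdots T_j$ moves the $j$-th entry of $\alpha^{(\mathbf{r})}$ rightward to position $n$, how $\zeta$ cycles it back to position $1$ while contributing $p^{-\phi_{\alpha^{(\mathbf{r})}_j}}$, and how $T_{j-1}^{-1}\cdots T_1^{-1}$ returns the vector to $\mathbf{v}_{\alpha^{(\mathbf{r})}}$. The accumulated scalar telescopes to $p^{-\gamma^{(\mathbf{r})}_j}$, where the signs arising from fermionic moves of $v_3$ past bosonic vectors are absorbed into the imaginary shifts in $\eta^{(\mathbf{r})}_l$ via the identity $p^{-\pi\sqrt{-1}r_3\log(p)^{-1}}=(-1)^{r_3}$; this is precisely what dictates the form \eqref{eta} of $\eta^{(\mathbf{r})}_l$.

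For the formula \eqref{standardtotensor}, the intertwining property reduces the claim to $\pi_{\mathcal{B},\underline{\phi}}(T_w)\mathbf{v}_{\alpha^{(\mathbf{r})}}=(-1)^{\eta(w)}\mathbf{v}_{w\alpha^{(\mathbf{r})}}$ for $w\in S_n^{I^{(\mathbf{r})}}$. I would prove this by choosing a reduced expression of $w$ coming from a bubble sort of $w\alpha^{(\mathbf{r})}$ back to $\alpha^{(\mathbf{r})}$ through successive swaps of ascending adjacent pairs; because $w$ is a minimal coset representative and the internal permutations of each block of $\alpha^{(\mathbf{r})}$ generate $S_{n,I^{(\mathbf{r})}}$, one has $l(w)=\#\{(c,d):c<d,\, w(c)>w(d),\, c,d\text{ in different blocks}\}$, which equals the number of inversions in $w\alpha^{(\mathbf{r})}$, so the bubble sort indeed yields a reduced expression. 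With this choice, at every intermediate step in the computation of $\pi(T_w)\mathbf{v}_{\alpha^{(\mathbf{r})}}$ the operator $\mathcal{B}$ acts on a tensor $v_a\otimes v_b$ with $a>b$ in $\{1,2,3\}$; the explicit form \eqref{sl21} then produces a single term $\pm v_b\otimes v_a$, the sign being $+1$ when both indices are bosonic and $-1$ when exactly one equals the fermionic index $3$. Collecting the signs gives $(-1)^N$, where $N$ counts the bubble-sort swaps that involve a $3$ and a non-$3$. A direct unpacking of definitions identifies $N$ with $\eta(w)$ in \eqref{etanew}: each pair $(i,j)$ counted there corresponds to the pair $(w(i),w(j))$, which is a (non-$3$ position, $3$ position) pair in $w\alpha^{(\mathbf{r})}$ with the non-$3$ coming first---exactly one such pair per fermionic bubble-sort swap. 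Bijectivity of $\psi^{(\mathbf{r})}$ is then immediate, since \eqref{standardtotensor} exhibits $\psi^{(\mathbf{r})}$ as sending the standard basis onto a signed tensor-product basis of $(V^{\otimes n})_{\mathbf{r}}$ and the two modules have equal dimension $n!/(r_1!\,r_2!\,r_3!)$.

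The main obstacle is the $Y_j$-eigenvalue verification. Although each of the three cases is in principle a telescoping computation, coordinating the accumulated $\pm q^{\pm 1}$ factors coming from the Hecke generators with the imaginary shifts concealed inside $\eta^{(\mathbf{r})}_l$, so as to recognise the result as exactly $p^{-\gamma^{(\mathbf{r})}_j}$, is delicate; indeed this is what pins down the somewhat opaque-looking definition of $\gamma^{(\mathbf{r})}$.
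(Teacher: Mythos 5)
Your proposal is correct and follows essentially the same route as the paper's own proof: verify the $T_i$- and $Y_j$-eigenvalue conditions on $\mathbf{v}_{\alpha^{(\mathbf{r})}}$ (the paper likewise does the $Y_j$ computation case-by-case via the factorisation \eqref{Yi} and the cyclic action of $\zeta$, with the signs matched against the imaginary shifts in \eqref{eta}), obtain the intertwiner from the induced-module structure, and prove \eqref{standardtotensor} by choosing a reduced expression of $w_\alpha$ adapted to sorting $\alpha^{(\mathbf{r})}$ into $\alpha$ by descending adjacent swaps and tracking the $\pm 1$ produced by $\mathcal{B}$ on mixed bosonic/fermionic pairs, after which bijectivity follows since the standard basis is sent to a signed tensor-product basis. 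No essential differences or gaps.
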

%%%%%%%%%%%%%%%%%%%%%%%%%%%%%%%%%%%%%%%%%%%%%%%%
\begin{proof}
From the explicit form of $\mathcal{B}$ it is clear that 
\begin{equation}\label{Teigenvalue}
\pi_{\mathcal{B},\underline{\phi}}(T_i)\mathbf{v}_{\alpha^{(\mathbf{r})}}=\mathcal{B}_{i,i+1}\mathbf{v}_{\alpha^{(\mathbf{r})}}=
\epsilon_i^{(\mathbf{r})}q^{\epsilon_i^{(\mathbf{r})}}\mathbf{v}_{\alpha^{(\mathbf{r})}}\qquad \forall\, i\in I^{(\mathbf{r})}.
\end{equation}
Next we show that $\pi_{\mathcal{B},\underline{\phi}}(Y_j)\mathbf{v}_{\alpha^{(\mathbf{r})}}=
p^{-\gamma^{(\mathbf{r})}_j}\mathbf{v}_{\alpha^{(\mathbf{r})}}$ for $1\leq j\leq n$. By the explicit expression of 
$\gamma^{(\mathbf{r})}$ the desired eigenvalues are 
\begin{equation*}
p^{-\gamma_i^{(\mathbf{r})}}=
\begin{cases}
(-1)^{n+1}q^{2i-r_3-1}p^{-\phi_3},\qquad &\hbox{ if }\,\, i\leq r_3,\\
(-1)^{r_3}q^{r_2+2(r_3-i)+1}p^{-\phi_2},\qquad &\hbox{ if }\,\, r_3<i\leq r_3+r_2,\\
(-1)^{r_3}q^{r_1+2(r_2+r_3-i)+1}p^{-\phi_1},\qquad &\hbox{ if }\,\, r_3+r_2<i\leq n.
\end{cases}
\end{equation*}
We give the detailed proof of the eigenvalue equation for $1\leq j\leq r_3$, the other two cases $r_3<j\leq r_3+r_2$ and $r_3+r_2<j\leq n$ can be verified by a similar
computation.

Since $j\leq r_3$, by \eqref{Yi} and \eqref{Teigenvalue} we have
\[
\pi_{\mathcal{B},\underline{\phi}}(Y_j)\mathbf{v}_{\alpha^{(\mathbf{r})}}=(-q^{-1})^{r_3-i}\pi_{\mathcal{B},\underline{\phi}}(T_{j-1}^{-1}\cdots T_1^{-1}\zeta 
T_{n-1}\cdots T_{r_3})\mathbf{v}_{\alpha^{(\mathbf{r})}}.
\]
Since $\mathcal{B}(v_3\otimes v_2)=-v_2\otimes v_3$ we get
\[
\pi_{\mathcal{B},\underline{\phi}}(Y_j)\mathbf{v}_{\alpha^{(\mathbf{r})}}=
(-1)^{r_2}(-q^{-1})^{r_3-i}\pi_{\mathcal{B},\underline{\phi}}(T_{j-1}^{-1}\cdots T_1^{-1}\zeta 
T_{n-1}\cdots T_{r_3+r_2})v_3^{\otimes (r_3-1)}\otimes v_2^{\otimes r_2}\otimes v_3\otimes v_1^{\otimes r_1}.
\] 
Then $\mathcal{B}(v_3\otimes v_1)=-v_1\otimes v_3$ gives
\begin{equation*}
\begin{split}
\pi_{\mathcal{B},\underline{\phi}}(Y_j)\mathbf{v}_{\alpha^{(\mathbf{r})}}&=
(-1)^{r_2+r_1}(-q^{-1})^{r_3-j}\pi_{\mathcal{B},\underline{\phi}}(T_{j-1}^{-1}\cdots T_1^{-1}\zeta)
v_3^{\otimes (r_3-1)}\otimes v_2^{\otimes r_2}\otimes v_1^{\otimes r_1}\otimes v_3\\
&=(-1)^{r_2+r_1}(-q^{-1})^{r_3-j}\phi_3\pi_{\mathcal{B},\underline{\phi}}(T_{j-1}^{-1}\cdots T_1^{-1})\mathbf{v}_{\alpha^{(\mathbf{r})}}.
\end{split}
\end{equation*}
Finally, using \eqref{Teigenvalue} again we find 
\[
\pi_{\mathcal{B},\underline{\phi}}(Y_j)\mathbf{v}_{\alpha^{(\mathbf{r})}}=
(-1)^{r_2+r_1}(-q^{-1})^{r_3-2j+1}\phi_3\mathbf{v}_{\alpha^{(\mathbf{r})}}=p^{-\gamma_j^{(\mathbf{r})}}\mathbf{v}_{\alpha^{(\mathbf{r})}}
\]
as desired. 

Consequently we have a unique surjective $H_n(q)$-intertwiner 
\[
\psi^{(\mathbf{r})}:
M^{I^{(\mathbf{r})},\epsilon^{(\mathbf{r})}}(\gamma^{(\mathbf{r})})\twoheadrightarrow
H_n(q)\mathbf{v}_{\alpha^{(r)}}\subseteq (V^{\otimes n})_{\mathbf{r}}
\]
mapping the cyclic vector $v^{I^{(\mathbf{r})},\epsilon^{(\mathbf{r})}}(\gamma^{(\mathbf{r})})$ to $\mathbf{v}_{\alpha^{(\mathbf{r})}}$. 
To complete the proof of the proposition, it thus suffices to prove \eqref{standardtotensor}.

Fix $\alpha\in\mathcal{K}_n[\mathbf{r}]$. We need to show that
\[
\psi^{(\mathbf{r})}\bigl(v_{w_\alpha}^{I^{(\mathbf{r})},\epsilon^{(\mathbf{r})}}(\gamma^{(\mathbf{r})})\bigr)=(-1)^{\eta(w_\alpha)}\mathbf{v}_{\alpha}.
\]
For the proof of this formula we first need to obtain a convenient reduced expression of $w_\alpha$.
Construct an element $w\in w_\alpha S_n^{I^{(\mathbf{r})}}$ (i.e., an element $w\in S_n$ satisfying $w\alpha^{(\mathbf{r})}=\alpha$) as product 
$w=s_{j_1}s_{j_2}\cdots s_{j_r}$ of simple neighbour transpositions such that, for all $u$, the $n$-tuple 
$s_{j_{u+1}}s_{j_{u+2}}\cdots s_{j_r}\alpha^{(\mathbf{r})}$ is of the form $(\beta_1^u,\ldots,\beta_n^u)$ with $\beta_{j_u}^u>\beta_{j_u+1}^u$. 
This can be done by transforming $\alpha^{(\mathbf{r})}$ to $\alpha$ by successive nearest neighbour exchanges between neighbours $(\beta,\beta^\prime)$
with $\beta>\beta^\prime$. Then it follows that
$l(w)=l\bigl(w_{\alpha}\bigr)$, hence $w=w_{\alpha}$. {}From this description of a reduced expression of $w_\alpha$ it follows that the number of pairs 
$(\beta_{j_u}^u,\beta_{j_{u}+1}^u)$ equal to $(t,s)$ is
$\#\{(i,j) \,\, | \,\, k_i^{\alpha,(s)}<k_j^{\alpha,(t)}\}$ for all $1\leq s<t\leq 3$. 

Since $\mathcal{B}v_3\otimes v_1=-v_1\otimes v_3$, $\mathcal{B}v_3\otimes v_2=-v_2\otimes v_3$ and $\mathcal{B}v_2\otimes v_1=v_1\otimes v_2$
we conclude that
\begin{equation*}
\begin{split}
\psi^{(\mathbf{r})}\bigl(v_{w_\alpha}^{I^{(\mathbf{r})},\epsilon^{(\mathbf{r})}}(\gamma^{(\mathbf{r})})\bigr)&=
\pi_{\mathcal{B},\underline{\phi}}(T_{w_\alpha})\mathbf{v}_{\alpha^{(\mathbf{r})}}\\
&=(-1)^{\eta(w_\alpha)}\mathbf{v}_{\alpha},
\end{split}
\end{equation*}
with $\eta(w)$ given by \eqref{etanew}.
 \end{proof}
%%%%%%%%%%%%%%%%%%%%%%%%%%%%%%%%%%%%%%%%%%%%%%%%%
\begin{lem}\label{technical}
Let $1\leq i<n$ and $\alpha\in\mathcal{K}_n[\mathbf{r}]$.
\begin{enumerate}
\item[{\bf (a)}] $s_{n-i}w_\alpha\in S_n^{I^{(\mathbf{r})}}$ if and only if $\alpha_{n-i}\not=\alpha_{n+1-i}$.
\item[{\bf (b)}] If $s_{n-i}w_\alpha\in S_n^{I^{(\mathbf{r})}}$ then $l(s_{n-i}w_\alpha)=l(w_\alpha)+1$ if and only if $\alpha_{n-i}>\alpha_{n+1-i}$.
\item[{\bf (c)}] If $s_{n-i}w_\alpha\not\in S_n^{I^{(\mathbf{r})}}$ then $i_{w_\alpha}\in \{1,\ldots,r_3-1\}$ if and only if $\alpha_{n-i}=3$ (recall that $i_{w_\alpha}\in I^{(\mathbf{r})}$
is the unique index such that $s_{n-i}w_\alpha=w_\alpha s_{i_{w_\alpha}}$).
\end{enumerate}
\end{lem}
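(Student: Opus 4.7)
The key underlying fact is that an element $w\in S_n$ lies in $S_n^{I^{(\mathbf{r})}}$ if and only if $w(k)<w(k+1)$ for every $k\in I^{(\mathbf{r})}$; this is the standard description of minimal coset representatives for a parabolic quotient of $S_n$. Combined with the explicit form \eqref{Inverse} of $w_\alpha$, which says that $w_\alpha^{-1}$ sends positions with $\alpha_k=3$ into the block $\{1,\ldots,r_3\}$, positions with $\alpha_k=2$ into $\{r_3+1,\ldots,r_3+r_2\}$, and positions with $\alpha_k=1$ into $\{r_3+r_2+1,\ldots,n\}$ (compatibly with $I^{(\mathbf{r})}$ being the complement of $\{r_3,r_3+r_2\}$ in $\{1,\ldots,n-1\}$), all three claims reduce to direct combinatorics.

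For (a), starting from $w_\alpha(k)<w_\alpha(k+1)$ for $k\in I^{(\mathbf{r})}$, the inequality $(s_{n-i}w_\alpha)(k)<(s_{n-i}w_\alpha)(k+1)$ fails for some such $k$ if and only if $\{w_\alpha(k),w_\alpha(k+1)\}=\{n-i,n+1-i\}$, and from the block description of $w_\alpha$ such a $k\in I^{(\mathbf{r})}$ exists precisely when $n-i$ and $n+1-i$ are consecutive occurrences of the same value in $\alpha$, i.e.\ $\alpha_{n-i}=\alpha_{n+1-i}$. This gives the stated equivalence.

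For (b), I would invoke the standard length formula $l(s_jw)=l(w)+1\Longleftrightarrow w^{-1}(j)<w^{-1}(j+1)$, applied with $j=n-i$ and $w=w_\alpha$. Under the hypothesis that $\alpha_{n-i}\neq\alpha_{n+1-i}$, the two indices $w_\alpha^{-1}(n-i)$ and $w_\alpha^{-1}(n+1-i)$ lie in different blocks of the partition $\{1,\ldots,r_3\}\sqcup\{r_3+1,\ldots,r_3+r_2\}\sqcup\{r_3+r_2+1,\ldots,n\}$, and the inequality $w_\alpha^{-1}(n-i)<w_\alpha^{-1}(n+1-i)$ is then equivalent to the block containing $w_\alpha^{-1}(n-i)$ preceding the other. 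Since the blocks are ordered by descending value ($3,2,1$), this is in turn equivalent to $\alpha_{n-i}>\alpha_{n+1-i}$.

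For (c), assume $\alpha_{n-i}=\alpha_{n+1-i}=j$, so $s_{n-i}\alpha=\alpha$ and hence $(s_{n-i}w_\alpha)\alpha^{(\mathbf{r})}=w_\alpha\alpha^{(\mathbf{r})}$. It follows that $s_{i_{w_\alpha}}:=w_\alpha^{-1}s_{n-i}w_\alpha$ fixes $\alpha^{(\mathbf{r})}$ and thus lies in $S_{n,I^{(\mathbf{r})}}$; as a conjugate of the transposition $s_{n-i}$ by $w_\alpha^{-1}$, it is the transposition swapping $w_\alpha^{-1}(n-i)$ and $w_\alpha^{-1}(n+1-i)$, two consecutive integers in the block associated to $j$. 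Therefore $i_{w_\alpha}=\min\bigl(w_\alpha^{-1}(n-i),w_\alpha^{-1}(n+1-i)\bigr)$, which lies in $\{1,\ldots,r_3-1\}$ precisely when $j=3$. The main obstacle throughout is purely bookkeeping: keeping the permutation convention of \eqref{Inverse} consistent with the left action $w\mapsto w\alpha^{(\mathbf{r})}$ and with the labelling of the blocks; no deeper argument is required.
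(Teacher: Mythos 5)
Your argument is correct and is essentially the paper's proof: the paper simply asserts that the lemma "follows directly from the explicit expression \eqref{Inverse} of $w_\alpha$ and the length formula \eqref{length}", and your use of the descent criterion $w\in S_n^I\Leftrightarrow w(k)<w(k+1)$ for $k\in I$, the criterion $l(s_jw)=l(w)+1\Leftrightarrow w^{-1}(j)<w^{-1}(j+1)$, and the conjugation $w_\alpha^{-1}s_{n-i}w_\alpha=(a,a+1)$ is exactly the intended direct verification, spelled out. No gaps; the block bookkeeping you describe is all that is needed.
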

%%%%%%%%%%%%%%%%%%%%%%%%%%%%%%%%%%%%%%%%%%%%%%%%%
\begin{proof}
The lemma follows directly from the explicit expression \eqref{Inverse} of $w_\alpha$ and the length formula \eqref{length}.
\end{proof}
%%%%%%%%%%%%%%%%%%%%%%%%%%%%%%%%%%%%%%%%%%%%%%%%%%
%%%%%%%%%%%%%%%%%%%%%%%%%%%%%%%%%%%%%%%%%%%%%%%%%%%%%%%%%%%%%%%%%%%%%%%%%%%%%%%%%%%%%%%%
\section{The elliptic $R$-matrix associated to $\mathfrak{gl}(2|1)$}\label{Sec6}
%%%%%%%%%%%%%%%%%%%%%%%%%%%%%%%%%%%%%%%%%%%%%%%%%%%%%%%%%%%%%%%%%%%%%%%%%%%%%%%%%%%%%%

%%%%%%%%%%%%%%%%%%%%%%%%%%%%%%%%%%%%%%%%%%%%%%%%%%
\subsection{The modified monodromy cocycle}
%%%%%%%%%%%%%%%%%%%%%%%%%%%%%%%%%%%%%%%%%%%%%%%%%%%
By Proposition \ref{blockiso} we have an isomorphism
\[
T:
V^{\otimes n}\overset{\sim}{\longrightarrow}
\bigoplus_{\mathbf{r}\in\mathcal{J}_n}M^{I^{(\mathbf{r})},\epsilon^{(\mathbf{r})}}(\gamma^{(\mathbf{r})})
\]
of $H_n(q)$-modules defined by
\[
T\bigl(\mathbf{v}_{w\alpha^{(\mathbf{r})}}\bigr)=(-1)^{\eta(w)}v_w^{I^{(\mathbf{r})},\epsilon^{(\mathbf{r})}}(\gamma^{(\mathbf{r})}),\qquad
\forall\, w\in S_n^{I^{(\mathbf{r})}},\, \forall\, \mathbf{r}\in\mathcal{J}_n.
\]
Write $\widetilde{b}_w^{(\mathbf{r})}$ for the basis $\widetilde{b}_w$ element of $M^{I^{(\mathbf{r})},\epsilon^{(\mathbf{r})}}(\gamma^{(\mathbf{r})})$
as defined in Section \ref{Sec4}, where $w\in S_n^{I^{(\mathbf{r})}}$. Let $G^{(\mathbf{r})}$ be the linear automorphism of $M^{I^{(\mathbf{r})},\epsilon^{(\mathbf{r})}}(\gamma^{(\mathbf{r})})$
defined by 
\[
G^{(\mathbf{r})}\bigl(\mathbf{v}_w^{I^{(\mathbf{r})},\epsilon^{(\mathbf{r})}}(\gamma^{(\mathbf{r})})\bigr)=\widetilde{b}_w^{(\mathbf{r})},\qquad \forall\, w\in S_n^{I^{(\mathbf{r})}}
\]
and write 
\[
\widetilde{T}:=\Bigl(\bigoplus_{\mathbf{r}\in\mathcal{J}_n}G^{(\mathbf{r})}\Bigr)\circ T:
V^{\otimes n}\longrightarrow
\bigoplus_{\mathbf{r}\in\mathcal{J}_n}M^{I^{(\mathbf{r})},\epsilon^{(\mathbf{r})}}(\gamma^{(\mathbf{r})}).
\]
$\widetilde{T}$ is a linear isomorphism given explicitly by
\[
\widetilde{T}\bigl(\mathbf{v}_{w\alpha^{(\mathbf{r})}}\bigr)=(-1)^{\eta(w)}\widetilde{b}_w^{(\mathbf{r})}, \qquad
\forall\, w\in S_n^{I^{(\mathbf{r})}},\, \forall\, \mathbf{r}\in\mathcal{J}_n.
\]
Set
\begin{equation}\label{reltildeM}
\widetilde{M}^{\pi_{\mathcal{B},D}}(u):=\widetilde{T}^{-1}\circ\Bigl(\bigoplus_{\mathbf{r}\in\mathcal{J}_n}M^{\pi^{(\mathbf{r})}}(u)\Bigr)\circ\widetilde{T}\in
\textup{End}\bigl(V^{\otimes n}\bigr)
\end{equation}
for $u\in S_n$, with $\pi^{(\mathbf{r})}$ the representation map of $M^{I^{(\mathbf{r})},\epsilon^{(\mathbf{r})}}(\gamma^{(\mathbf{r})})$. Then it follows that
\begin{equation}\label{explicitontensor}
\widetilde{M}^{\pi_{\mathcal{B},D}}(u)\mathbf{v}_\beta=
\sum_{\alpha\in\mathcal{K}_n[\mathbf{r}]}(-1)^{\eta(w_\alpha)+\eta(w_\beta)}
m_{w_{\alpha},w_{\beta}}^{I^{(\mathbf{r})},\epsilon^{(\mathbf{r})},u}(\mathbf{z};\gamma^{(\mathbf{r})})\mathbf{v}_\alpha\qquad \forall\, \beta\in\mathcal{K}_n[\mathbf{r}]
\end{equation}
for all $u\in S_n$.
Using the expressions of the connection coefficients $m_{w,w^\prime}^{I^{(\mathbf{r})},\epsilon^{(\mathbf{r})},s_i}(\mathbf{z};\gamma^{(\mathbf{r})})$ (see \eqref{explicitdiagonal} and \eqref{explicitoffdiagonal}) we obtain the following explicit formulas.
%%%%%%%%%%%%%%%%%%%%%%%%%%%%%%%%%%%%%%%%%%%%%%%%
\begin{cor}\label{explicitontensorCOR}
Let $\mathbf{r}\in\mathcal{J}_n$ and $1\leq i<n$. 
\begin{enumerate}
\item[{\bf (a)}] For $\beta=(\beta_1,\ldots,\beta_n)\in\mathcal{K}_n[\mathbf{r}]$ with  $\beta_{n-i}=\beta_{n+1-i}$ we have
\begin{equation*}
\widetilde{M}^{\pi_{\mathcal{B},D}}(s_i)\mathbf{v}_\beta=
\begin{cases}
\mathbf{v}_{\beta} \quad &\hbox{ if }\, \beta_{n-i}\in\{1,2\},\\
-\frac{c(z_i-z_{i+1})}{c(z_{i+1}-z_i)}\mathbf{v}_\beta \quad &\hbox{ if }\, \beta_{n-i}=3.
\end{cases}
\end{equation*}
\item[{\bf (b)}] For $\beta=(\beta_1,\ldots,\beta_n)\in\mathcal{K}_n[\mathbf{r}]$ with  $\beta_{n-i}\not=\beta_{n+1-i}$ we have
\begin{equation*}
\begin{split}
\widetilde{M}^{\pi_{\mathcal{B},D}}(s_i)\mathbf{v}_\beta&=
A^{\gamma_{w_\beta^{-1}(n-i)}^{(\mathbf{r})}-\gamma_{w_\beta^{-1}(n-i+1)}^{(\mathbf{r})}}(z_{i}-z_{i+1})\mathbf{v}_\beta\\
&+(-1)^{\delta_{\beta_{n-i},3}+\delta_{\beta_{n+1-i},3}}
B^{\gamma_{w_\beta^{-1}(n-i)}^{(\mathbf{r})}-\gamma_{w_\beta^{-1}(n-i+1)}^{(\mathbf{r})}}(z_{i}-z_{i+1})\mathbf{v}_{s_{n-i}\beta}.
\end{split}
\end{equation*}
\end{enumerate}
\end{cor}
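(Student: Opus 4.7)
The plan is to derive both parts of the corollary directly from \eqref{explicitontensor} specialised to $u=s_i$, by reading off the nonzero matrix entries of $\mathbb{M}^{\pi^{(\mathbf{r})}}(s_i)$ from \eqref{explicitdiagonal}--\eqref{explicitoffdiagonal} and routing each summand through Lemma \ref{technical}. For fixed $\beta\in\mathcal{K}_n[\mathbf{r}]$, items (a)--(b) preceding \eqref{Cexpl} show that the only potentially nonzero terms in the sum over $\alpha\in\mathcal{K}_n[\mathbf{r}]$ are $\alpha=\beta$ (always) and, when $s_{n-i}w_\beta\in S_n^{I^{(\mathbf{r})}}$, the term with $w_\alpha=s_{n-i}w_\beta$; in the latter situation the uniqueness of minimal coset representatives in $S_n^{I^{(\mathbf{r})}}$ forces $\alpha=s_{n-i}\beta$ and $w_{s_{n-i}\beta}=s_{n-i}w_\beta$. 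By Lemma \ref{technical}(a), this second summand is present precisely when $\beta_{n-i}\neq\beta_{n+1-i}$, which is exactly the dichotomy separating parts (a) and (b) of the corollary.

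For part (a) only the diagonal term contributes. The sign prefactor $(-1)^{2\eta(w_\beta)}$ is trivial, and the first line of \eqref{explicitdiagonal} yields the coefficient $\epsilon_{i_{w_\beta}}c(z_i-z_{i+1})/c\bigl(\epsilon_{i_{w_\beta}}(z_i-z_{i+1})\bigr)$. By Lemma \ref{technical}(c), $i_{w_\beta}\in\{1,\ldots,r_3-1\}$ if and only if $\beta_{n-i}=3$; combining this with the definition of $\epsilon^{(\mathbf{r})}$ shows that $\epsilon_{i_{w_\beta}}=-$ exactly when $\beta_{n-i}=3$, producing the coefficient $-c(z_i-z_{i+1})/c(z_{i+1}-z_i)$, while for $\beta_{n-i}\in\{1,2\}$ one has $\epsilon_{i_{w_\beta}}=+$ and the coefficient collapses to $1$; this matches the stated formula.

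For part (b) both the diagonal and the unique off-diagonal term contribute. The diagonal contribution is immediate from the second line of \eqref{explicitdiagonal} with trivial sign, producing the $A$-term in the statement. The off-diagonal contribution is the $B$-coefficient from \eqref{explicitoffdiagonal} times the sign $(-1)^{\eta(s_{n-i}w_\beta)+\eta(w_\beta)}$, and the crux of the proof is to identify this sign with $(-1)^{\delta_{\beta_{n-i},3}+\delta_{\beta_{n+1-i},3}}$. I would argue this using the combinatorial interpretation of $\eta$ extracted from the proof of Proposition \ref{blockiso}: $\eta(w)$ records, modulo $2$, the number of times a $v_3$ must be moved past a non-$v_3$ when reshuffling $\mathbf{v}_{\alpha^{(\mathbf{r})}}$ into $\mathbf{v}_{w\alpha^{(\mathbf{r})}}$. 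Replacing $w_\beta$ by $s_{n-i}w_\beta$ corresponds to swapping the entries $\beta_{n-i}$ and $\beta_{n+1-i}$; under the hypothesis of (b) these entries are distinct, so at most one equals $3$, and a short case distinction (both entries in $\{1,2\}$; exactly one equal to $3$) shows that the parity of $\eta$ changes by exactly $\delta_{\beta_{n-i},3}+\delta_{\beta_{n+1-i},3}$.

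The main obstacle is precisely this sign computation in part (b): it is the only place in the argument where the combinatorial content of Proposition \ref{blockiso} (encoded by the factor $(-1)^{\eta(w)}$ relating standard and tensor-product bases) must be unpacked explicitly. The remaining steps are mechanical applications of Lemma \ref{technical} to decide which branch of \eqref{explicitdiagonal}--\eqref{explicitoffdiagonal} is operative for each choice of $\beta$.
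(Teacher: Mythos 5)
Your proposal is correct and follows essentially the same route as the paper: read off the two possibly nonzero entries of the connection matrix from the formulas preceding \eqref{Cexpl}, route the dichotomy through Lemma \ref{technical}, identify $s_{n-i}w_\beta=w_{s_{n-i}\beta}$ when it lies in $S_n^{I^{(\mathbf{r})}}$, and then compute the sign $(-1)^{\eta(w_\beta)+\eta(w_{s_{n-i}\beta})}$ via the combinatorial reformulation of $\eta$ in terms of inversions between positions carrying a $3$ and positions carrying a $1$ or $2$. The paper's own proof compresses part (a) to a one-line remark and records the reformulation of $\eta(w_\beta)$ explicitly as a sum of two inversion counts before concluding; your exposition unpacks part (a) slightly more and describes the $\eta$-parity argument more informally, but the content and the key lemma invocations are identical.
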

%%%%%%%%%%%%%%%%%%%%%%%%%%%%%%%%%%%%%%%%%%%%%%%%
\begin{proof}
{\bf (a)} is immediate from the remarks preceding the corollary.\\
{\bf (b)} If $\beta_{n-i}\not=\beta_{n+1-i}$ then $s_{n-i}w_\beta\in S_n^{I^{(\mathbf{r})}}$ by Lemma \ref{technical}, hence $s_{n-i}w_\beta=w_\gamma$ for some
$\gamma\in\mathcal{K}_n[\mathbf{r}]$. Then
\[
\gamma=\Sigma^{(\mathbf{r})}(s_{n-i}w_\beta)=(s_{n-i}w_\beta)\alpha^{(\mathbf{r})}=s_{n-i}\beta,
\] 
hence $s_{n-i}w_\beta=w_{s_{n-i}\beta}$. 
Using the fact that 
\[
\eta(w_\beta)=\#\{(r,s)\,\, | \,\, k_r^{\beta,(2)}<k_s^{\beta,(3)}\}+\#\{(r,s) \,\, | \,\, k_r^{\beta,(1)}<k_s^{\beta,(3)}\}
\] 
we obtain
\[
(-1)^{\eta(w_\beta)+\eta(w_{s_{n-i}\beta})}=(-1)^{\delta_{\beta_{n-i},3}+\delta_{\beta_{n+1-i},3}}
\]
if $\beta_{n-i}\not=\beta_{n+1-i}$. 
The proof now follows directly from the explicit expressions \eqref{explicitdiagonal} and
\eqref{explicitoffdiagonal} of the connection coefficients.
\end{proof}
%%%%%%%%%%%%%%%%%%%%%%%%%%%%%%%%%%%%%%%%%%%%%%%%

%%%%%%%%%%%%%%%%%%%%%%%%%%%%%%%%%%%%%%%%%%%%%%
\subsection{Finding $\mathcal{R}(x;\underline{\phi})$}
%%%%%%%%%%%%%%%%%%%%%%%%%%%%%%%%%%%%%%%%%%%%%%%
In this subsection we fix $n=2$ and focus on computing the modified monodromy cocycle of the quantum affine KZ equations associated to the rank two spin representation 
$\pi_{\mathcal{B},\underline{\phi}}: H_2(q)\rightarrow \textup{End}(V^{\otimes 2})$. It will lead to the explicit expression of the elliptic dynamical $R$-matrix
$\mathcal{R}(x;\underline{\phi})$ from Subsection \ref{Rsub}.

From our previous results we know that the rank two spin representation $V^{\otimes 2}$ splits as $H_2(q)$-module into the direct sum of six principal series blocks
\begin{equation*}
\begin{split}
V^{\otimes 2}&=\bigoplus_{\mathbf{r}\in\mathcal{J}_2}(V^{\otimes 2})_{\mathbf{r}}\\
&=(V^{\otimes 2})_{(2,0,0)}\oplus (V^{\otimes 2})_{(0,2,0)}\oplus (V^{\otimes 2})_{(0,0,2)}
\oplus (V^{\otimes 2})_{(1,1,0)}\oplus (V^{\otimes 2})_{(1,0,1)}\oplus (V^{\otimes 2})_{(0,1,1)},
\end{split}
\end{equation*}
where the first three constituents are one-dimensional and the last three two-dimensional.
Write $s=s_1$ for the nontrivial element of $S_2$.

%%%%%%%%%%%%%%%%%%%%%%%%%%%%%%%%%%%%%%%%%%%%%%%%%%%%%%%%%%
\begin{lem}
For $n=2$ we have $\widetilde{M}^{\pi_{\mathcal{B},D}}(s)=\mathcal{R}(z_1-z_2;\underline{\phi})$ as linear operators
on $V\otimes V$.
\end{lem}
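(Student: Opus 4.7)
The plan is to verify the claimed equality entrywise on the tensor product basis $\{\mathbf{v}_\beta:\beta\in\mathcal{K}_2\}$ of $V\otimes V$. The left-hand side is read off from Corollary \ref{explicitontensorCOR} specialised to $n=2$, $i=1$ (so that $n-i=1$ and $n+1-i=2$, and the only relevant entries of $\beta$ are $\beta_1,\beta_2$), while the right-hand side is read off from Definition \ref{fmat}. The work splits cleanly into the three diagonal basis vectors $\mathbf{v}_{(k,k)}$ and the six off-diagonal basis vectors $\mathbf{v}_{(k,\ell)}$ with $k\neq\ell$.

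For the diagonal basis vectors, $\beta=(k,k)$ forces $\mathbf{r}=(r_1,r_2,r_3)$ with $r_k=2$ and the other entries zero. Corollary \ref{explicitontensorCOR}(a) then gives $\widetilde{M}^{\pi_{\mathcal{B},D}}(s)\mathbf{v}_{(k,k)}=\mathbf{v}_{(k,k)}$ for $k\in\{1,2\}$ and $-\frac{c(x)}{c(-x)}\mathbf{v}_{(3,3)}$ for $k=3$, where $x:=z_1-z_2$. Since $p(1)=p(2)=\overline{0}$ (so $(-1)^{p(k)}c(x)/c((-1)^{p(k)}x)=1$) and $p(3)=\overline{1}$ (so the coefficient is $-c(x)/c(-x)$), this matches the diagonal formula of Definition \ref{fmat} verbatim.

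The substance of the proof is in the off-diagonal case. For each $\beta=(k,\ell)$ with $k\neq\ell$ I would identify $\mathbf{r}=\mathbf{r}(\beta)\in\mathcal{J}_2$, determine $w_\beta\in\{e,s\}$ via \eqref{Inverse} (namely $w_\beta=e$ when $\beta=\alpha^{(\mathbf{r})}$ and $w_\beta=s$ otherwise), and then evaluate the weight difference
\[
\Delta_\beta:=\gamma^{(\mathbf{r})}_{w_\beta^{-1}(1)}-\gamma^{(\mathbf{r})}_{w_\beta^{-1}(2)}.
\]
The decisive claim, to be verified in all six cases, is the congruence
\[
\Delta_{(k,\ell)}\equiv\phi_k-\phi_\ell\pmod{2\pi\sqrt{-1}\log(p)^{-1}\mathbb{Z}}.
\]
Since $A^y(x)$ and $B^y(x)$ from \eqref{AB} depend on $y$ only through $p^y$, this congruence forces $A^{\Delta_\beta}(x)=A^{\phi_k-\phi_\ell}(x)$ and $B^{\Delta_\beta}(x)=B^{\phi_k-\phi_\ell}(x)$. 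This is the main technical hurdle: the $(r_j+1)\kappa$ summands hidden inside $\eta_j^{(\mathbf{r})}$ from \eqref{eta} are calibrated precisely to absorb the position-dependent $2k\kappa$ shifts in the definition of $\gamma^{(\mathbf{r})}$, while the imaginary pieces $-\pi\sqrt{-1}r_3\log(p)^{-1}$ and $-\pi\sqrt{-1}(n-1)\log(p)^{-1}$ either cancel between the two terms of $\Delta_\beta$ or get absorbed into the period lattice $2\pi\sqrt{-1}\log(p)^{-1}\mathbb{Z}$. Once this bookkeeping is carried out, the sign prefactor $(-1)^{\delta_{\beta_1,3}+\delta_{\beta_2,3}}$ in Corollary \ref{explicitontensorCOR}(b) coincides with $(-1)^{p(\beta_1)+p(\beta_2)}$ in Definition \ref{fmat}, since $p(m)=\overline{1}$ if and only if $m=3$. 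This matches the off-diagonal entries of $\widetilde{M}^{\pi_{\mathcal{B},D}}(s)$ with those of $\mathcal{R}(z_1-z_2;\underline{\phi})$, completing the identification.
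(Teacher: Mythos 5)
Your overall plan — check the equality entrywise on the tensor basis using Corollary \ref{explicitontensorCOR}, organized by the constituents $(V^{\otimes 2})_{\mathbf{r}}$ — is exactly the route the paper takes, and your treatment of the diagonal case and the sign comparison $(-1)^{\delta_{\beta_1,3}+\delta_{\beta_2,3}}=(-1)^{p(\beta_1)+p(\beta_2)}$ is correct.

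However, the key off-diagonal step as you state it contains a false claim. You assert that $A^y(x)$ and $B^y(x)$ in \eqref{AB} depend on $y$ only through $p^y$, and deduce that the congruence $\Delta_\beta\equiv\phi_{\beta_1}-\phi_{\beta_2}\pmod{2\pi\sqrt{-1}\log(p)^{-1}\mathbb{Z}}$ would suffice. This is not true: the exponential prefactors $p^{(2\kappa-y)x}$ in $A^y(x)$ and $p^{2\kappa(x-y)}$ in $B^y(x)$ are \emph{not} functions of $p^y$ (shifting $y\mapsto y+2\pi\sqrt{-1}\log(p)^{-1}$ multiplies them by $e^{-2\pi\sqrt{-1}x}$ and $e^{-4\pi\sqrt{-1}\kappa}$ respectively, which are not $1$ in general). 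Indeed, in the $n=3$ computation later in the same section the paper explicitly carries imaginary shifts such as $\phi_3-\phi_2-\pi\sqrt{-1}\log(p)^{-1}$ inside the exponent of $A$ and $B$ and must not drop them; this would be absurd if $A^y,B^y$ were periodic in $y$.

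What rescues your argument for $n=2$ is that the congruence is unnecessarily weak: a direct computation with \eqref{eta} shows that the imaginary pieces \emph{cancel exactly}. Concretely, for $n=2$ one finds
\begin{align*}
\gamma^{(1,1,0)}&=(\phi_2,\phi_1),\\
\gamma^{(1,0,1)}&=\bigl(-\pi\sqrt{-1}\log(p)^{-1}+\phi_3,\,-\pi\sqrt{-1}\log(p)^{-1}+\phi_1\bigr),\\
\gamma^{(0,1,1)}&=\bigl(-\pi\sqrt{-1}\log(p)^{-1}+\phi_3,\,-\pi\sqrt{-1}\log(p)^{-1}+\phi_2\bigr),
\end{align*}
so for every off-diagonal $\beta$ one gets $\Delta_\beta=\phi_{\beta_1}-\phi_{\beta_2}$ \emph{on the nose}, and $A^{\Delta_\beta}(x)=A^{\phi_{\beta_1}-\phi_{\beta_2}}(x)$, $B^{\Delta_\beta}(x)=B^{\phi_{\beta_1}-\phi_{\beta_2}}(x)$ are identities of equal arguments, not consequences of any periodicity. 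So your conclusion is right, but you should replace the periodicity/congruence argument by this explicit cancellation; as stated, that step is a genuine logical gap even though it happens to be harmless here.
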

%%%%%%%%%%%%%%%%%%%%%%%%%%%%%%%%%%%%%%%%%%%%%%%%%%%%%%%%%%%%
\begin{proof}
This follows by a direct computation using Corollary \ref{explicitontensorCOR}. 
For instance, in the $9\times 9$-matrix representation of $\mathcal{R}(x;\underline{\phi})$ the first, 
fifth and ninth column of $R(x;\underline{\phi})$ arise from the action of $\widetilde{M}^{\pi_{\mathcal{B},D}}(s)$
on the one-dimensional constituents $(V^{\otimes 2})_{(2,0,0)}$, $(V^{\otimes 2})_{(0,2,0)}$ and 
$(V^{\otimes 2})_{(0,0,2)}$ respectively, in view of Corollary \ref{explicitontensorCOR}{\bf (a)}. The second and fourth columns correspond to the action of 
$\widetilde{M}^{\pi_{\mathcal{B},D}}(s)$
on $(V^{\otimes 2})_{(1,1,0)}=\textup{span}\{v_1\otimes v_2, v_2\otimes v_1\}$, in view of Corollary \ref{explicitontensorCOR}{\bf (b)}. 
Similarly, the third and seventh column (respectively
sixth and eighth column) corresponds to the action of $\widetilde{M}^{\pi_{\mathcal{B},D}}(s)$
on the constituent $(V^{\otimes 2})_{(1,0,1)}$ (respectively $(V^{\otimes 2})_{(0,1,1)}$).
\end{proof}
%%%%%%%%%%%%%%%%%%%%%%%%%%%%%%%%%%%%%%%%%%%%%%%%%%%%%%%%%%%%
\begin{cor}[Unitarity]
\[
\mathcal{R}(x;\underline{\phi})\mathcal{R}(-x;\underline{\phi})=\textup{Id}_{V^{\otimes 2}}.
\]
\end{cor}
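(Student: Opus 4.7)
The plan is to deduce the unitarity of $\mathcal{R}(x;\underline{\phi})$ directly from the unitarity relation \eqref{unitarity} of the monodromy cocycle of principal series modules, using the intertwining established in the preceding lemma. No direct $9\times 9$ matrix verification is needed.

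First I would recall that the modified monodromy cocycle $\{\widetilde{M}^{\pi_{\mathcal{B},D}}(u)\}_{u\in S_n}$ inherits the $S_n$-cocycle property from the principal series monodromy cocycles through the conjugation \eqref{reltildeM} by $\widetilde{T}$ (this is observed in Section \ref{Sec4} right after the definition of the modified cocycle). In particular, specializing the cocycle property to $u=v=s$ and using $s^2=e$, one obtains the unitarity relation
\[
\widetilde{M}^{\pi_{\mathcal{B},D}}(s)(\mathbf{z})\,\widetilde{M}^{\pi_{\mathcal{B},D}}(s)(s\mathbf{z})=\textup{Id}_{V^{\otimes 2}},
\]
which is nothing but the image under $\widetilde{T}$ of the direct sum over $\mathbf{r}\in\mathcal{J}_2$ of the unitarity relations \eqref{unitarity} for the principal series blocks $M^{I^{(\mathbf{r})},\epsilon^{(\mathbf{r})}}(\gamma^{(\mathbf{r})})$.

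Next I would invoke the preceding lemma, which identifies $\widetilde{M}^{\pi_{\mathcal{B},D}}(s)(\mathbf{z})=\mathcal{R}(z_1-z_2;\underline{\phi})$. Since $s\mathbf{z}=(z_2,z_1)$, the above displayed unitarity reads
\[
\mathcal{R}(z_1-z_2;\underline{\phi})\,\mathcal{R}(z_2-z_1;\underline{\phi})=\textup{Id}_{V^{\otimes 2}}.
\]
Setting $x:=z_1-z_2$ yields exactly the desired identity $\mathcal{R}(x;\underline{\phi})\mathcal{R}(-x;\underline{\phi})=\textup{Id}_{V^{\otimes 2}}$.

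The only step requiring care is confirming that the $S_n$-cocycle property of $\widetilde{M}^{\pi_{\mathcal{B},D}}$ indeed implies the stated unitarity, which amounts to checking that the variable substitution $\mathbf{z}\mapsto s\mathbf{z}$ in the cocycle identity matches the reflection $x\mapsto -x$ in the spectral parameter. This is immediate since $\mathcal{R}(\cdot;\underline{\phi})$ depends on $\mathbf{z}$ only through the difference $z_1-z_2$. Thus there is no genuine obstacle; the proof is essentially a one-line corollary of the preceding lemma and the unitarity of the principal series connection matrices \eqref{unitarity}.
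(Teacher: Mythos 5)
Your proof is correct and follows essentially the same route as the paper: the paper's own argument deduces the corollary from the unitarity relation \eqref{unitarity} for the principal series connection matrices together with the conjugation formula \eqref{reltildeM}, combined with the lemma identifying $\widetilde{M}^{\pi_{\mathcal{B},D}}(s)$ with $\mathcal{R}(z_1-z_2;\underline{\phi})$. You merely spell out the intermediate cocycle-property step explicitly; there is no substantive difference.
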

%%%%%%%%%%%%%%%%%%%%%%%%%%%%%%%%%%%%%%%%%%%%%%%%%%%%%%%%%%%%
\begin{proof}
This follows from \eqref{unitarity} and \eqref{reltildeM}.
\end{proof}
%%%%%%%%%%%%%%%%%%%%%%%%%%%%%%%%%%%%%%%%%%%%%%%%%%%%%%%%%%%%%

%%%%%%%%%%%%%%%%%%%%%%%%%%%%%%%%%%%%%%%%%%%%%%%%%
\subsection{The dynamical quantum Yang-Baxter equation}
%%%%%%%%%%%%%%%%%%%%%%%%%%%%%%%%%%%%%%%%%%%%%%%%%

Next we prove that $\mathcal{R}(x;\underline{\phi})$ satisfies the 
dynamical quantum Yang-Baxter equation in braid-like form (see Theorem \ref{mainTHMfirst})
by computing the modified monodromy cocycle of the quantum affine KZ equations associated to the spin representation $\pi_{\mathcal{B},\underline{\phi}}: H_3(q)\rightarrow
\textup{End}(V^{\otimes 3})$ and expressing them in terms of local actions of $\mathcal{R}(x;\underline{\phi})$. So in this subsection, we fix $n=3$.

Let $\underline{\Psi}^{(j)}\in\mathbb{C}^3$ for $j=1,2,3$ and let $Q(\underline{\phi}): V^{\otimes 3}\rightarrow V^{\otimes 3}$ be a family of linear operators on $V^{\otimes 3}$ depending on $\underline{\phi}\in\mathbb{C}^3$.
We use the notation $Q(\underline{\phi}+\widehat{\underline{\Psi}}_i)$ to denote the linear operator on $V^{\otimes 3}$ which acts on the subspace $V^{\otimes (i-1)}\otimes\mathbb{C}v_j\otimes V^{\otimes (3-i)}$ as $Q(\underline{\phi}+\underline{\Psi}^{(j)})$ for $1\leq i,j\leq 3$.

%%%%%%%%%%%%%%%%%%%%%%%%%%%%%%%%%%%%%%%%%%%%%%%%
\begin{lem}
Let $n=3$. For the simple reflections $s_1$ and $s_2$ of $S_3$ we have
\begin{equation}\label{toprove1}
\begin{split}
\widetilde{M}^{\pi_{\mathcal{B},D}}(s_1)&=
\mathcal{R}_{23}(z_1-z_2;\underline{\phi}+\widehat{\underline{\Psi}}(\kappa)_1),\\
\widetilde{M}^{\pi_{\mathcal{B},D}}(s_2)&=
\mathcal{R}_{12}(z_2-z_3;\underline{\phi}+\widehat{\underline{\Psi}}(-\kappa)_3)
\end{split}
\end{equation}
as linear operators on $V^{\otimes 3}$, where
\begin{equation} \label{psi1}
\underline{\Psi}^{(j)}(\alpha):=
\begin{cases}
(-\alpha,0,-\pi\sqrt{-1}\log(p)^{-1})\quad &\hbox{ if }\,\, j=1,\\
(0,-\alpha,-\pi\sqrt{-1}\log(p)^{-1})\quad &\hbox{ if }\,\, j=2,\\
(0,0,\alpha)\quad &\hbox{ if }\,\, j=3.
\end{cases}
\end{equation}
\end{lem}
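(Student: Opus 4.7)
The plan is to verify the two identities in \eqref{toprove1} by evaluating both sides on every element $\mathbf{v}_\beta$ of the tensor product basis of $V^{\otimes 3}$, with $\beta$ ranging over $\bigsqcup_{\mathbf{r}\in\mathcal{J}_3}\mathcal{K}_3[\mathbf{r}]$. On the left-hand side, Corollary \ref{explicitontensorCOR} gives $\widetilde{M}^{\pi_{\mathcal{B},D}}(s_i)\mathbf{v}_\beta$ as a linear combination of $\mathbf{v}_\beta$ and $\mathbf{v}_{s_{n-i}\beta}$ with coefficients built from the functions $A^y$, $B^y$ and $c$ of Subsection \ref{Rsub}, with exponents dictated by the $\gamma^{(\mathbf{r})}$-values. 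On the right-hand side, Definition \ref{fmat} applied locally in the tensor legs $(2,3)$ for $s_1$ (respectively $(1,2)$ for $s_2$) yields the same building blocks, but with the dynamical parameters $\underline{\phi}$ shifted by $\underline{\Psi}^{(\beta_1)}(\kappa)$ (respectively $\underline{\Psi}^{(\beta_3)}(-\kappa)$) according to \eqref{psi1}.

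The diagonal case $\beta_{n-i}=\beta_{n+1-i}$ is essentially immediate: Corollary \ref{explicitontensorCOR}(a) produces the factor $1$ when $\beta_{n-i}\in\{1,2\}$ and $-c(z_i-z_{i+1})/c(z_{i+1}-z_i)$ when $\beta_{n-i}=3$, which matches $(-1)^{p(\beta_{n-i})}c(x)/c((-1)^{p(\beta_{n-i})}x)$ from Definition \ref{fmat} via the parity map \eqref{pmap}. Since no dynamical dependence is present on these diagonal entries, the shifts on the right-hand side play no role. For the off-diagonal case $\beta_{n-i}\neq\beta_{n+1-i}$, the sign $(-1)^{\delta_{\beta_{n-i},3}+\delta_{\beta_{n+1-i},3}}$ from Corollary \ref{explicitontensorCOR}(b) agrees with $(-1)^{p(\beta_{n-i})+p(\beta_{n+1-i})}$ from Definition \ref{fmat}, since $p(k)=\overline{1}$ exactly when $k=3$. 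The essential identity to establish is then
\[
\gamma^{(\mathbf{r})}_{w_\beta^{-1}(n-i)}-\gamma^{(\mathbf{r})}_{w_\beta^{-1}(n-i+1)}=\phi'_{\beta_{n-i}}-\phi'_{\beta_{n+1-i}},
\]
with $\phi'=\underline{\phi}+\underline{\Psi}^{(\beta_1)}(\kappa)$ in the $s_1$-case and $\phi'=\underline{\phi}+\underline{\Psi}^{(\beta_3)}(-\kappa)$ in the $s_2$-case.

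The principal obstacle is verifying this last identity through a finite case-by-case analysis over the possible configurations of $(\beta_1,\beta_2,\beta_3)$. Substituting the piecewise definition of $\gamma^{(\mathbf{r})}_k$, the genuine $\phi$-dependent part $\phi_{\beta_{n-i}}-\phi_{\beta_{n+1-i}}$ appears automatically, so the task reduces to showing that the combination of the constants $\eta_j^{(\mathbf{r})}$ in \eqref{eta} together with the linear contributions $\pm 2w_\beta^{-1}(k)\kappa$ reproduces the required shift $\Psi^{(\beta_j)}(\pm\kappa)_{\beta_{n-i}}-\Psi^{(\beta_j)}(\pm\kappa)_{\beta_{n+1-i}}$. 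The specific choices of $\eta_j^{(\mathbf{r})}$ in \eqref{eta}, in particular the $\pi\sqrt{-1}\log(p)^{-1}$-terms weighted by $r_3$ or $n-1$, are designed precisely so that the $\mathbf{r}$-dependent constants cancel and leave the universal shift vectors $\underline{\Psi}^{(j)}(\pm\kappa)$ of \eqref{psi1}. Once verified on each configuration, the equalities \eqref{toprove1} of linear operators on $V^{\otimes 3}$ follow.
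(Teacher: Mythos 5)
Your proposal is correct and follows essentially the same path as the paper's own proof: both reduce the lemma to a finite case-by-case verification over tensor product basis vectors, using Corollary \ref{explicitontensorCOR} for the left-hand side and Definition \ref{fmat} for the right-hand side, with the off-diagonal case reducing to the identity $\gamma^{(\mathbf{r})}_{w_\beta^{-1}(n-i)}-\gamma^{(\mathbf{r})}_{w_\beta^{-1}(n+1-i)}=\phi'_{\beta_{n-i}}-\phi'_{\beta_{n+1-i}}$ for the shifted parameter $\underline{\phi}'$. The paper's proof carries out two representative cases ($v_1\otimes v_3\otimes v_2$ and $v_2\otimes v_3\otimes v_2$) and declares the remaining ones analogous, which is exactly what your reduction sets up.
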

%%%%%%%%%%%%%%%%%%%%%%%%%%%%%%%%%%%%%%%%%%%%%%%%%
\begin{proof}
The proof of \eqref{toprove1} is a rather long case by case verification which involves computing the action of the left hand side on
the tensor product basis elements using 
Corollary \ref{explicitontensorCOR}. As an example
of the typical arguments, we give here the proof of the first identity in \eqref{toprove1} when acting on the tensor product 
basis vectors $v_1\otimes v_3\otimes v_2$ and $v_2\otimes v_3\otimes v_2$. This will also clarify the subtleties arising from the fact 
that $V^{\otimes 3}$ has multiple principal series blocks,
\[
V^{\otimes 3}=\bigoplus_{\mathbf{r}\in\mathcal{J}_3}(V^{\otimes 3})_{\mathbf{r}}.
\]

Consider the tensor product basis element $v_1\otimes v_3\otimes v_2$. Note that 
\[
v_1\otimes v_3\otimes v_2=\mathbf{v}_\beta\in (V^{\otimes 3})_{(1,1,1)}
\] 
with $\beta:=(1,3,2)\in\mathcal{K}_3[(1,1,1)]$. 
Note that $I^{(1,1,1)}=\emptyset$, $\epsilon^{(1,1,1)}=\emptyset$, 
\begin{equation*}
w_\beta=
\left(\begin{matrix} 1 & 2 & 3\\ 2 & 3 & 1\end{matrix}\right)
\end{equation*}
and 
\begin{equation*}
\begin{split}
\gamma^{(1,1,1)}&=(\eta_3^{(1,1,1)}+\phi_3+2\kappa,\eta_2^{(1,1,1)}+\phi_2-2\kappa,\eta_1^{(1,1,1)}+\phi_1-2\kappa)\\
&=(-2\pi\sqrt{-1}\log(p)^{-1}+\phi_3, -\pi\sqrt{-1}\log(p)^{-1}+\phi_2,-\pi\sqrt{-1}\log(p)^{-1}+\phi_1).
\end{split}
\end{equation*}
Consequently,
\[
\gamma_{w_\beta^{-1}(2)}^{(1,1,1)}-\gamma_{w_\beta^{-1}(3)}^{(1,1,1)}=\gamma_1^{(1,1,1)}-\gamma_2^{(1,1,1)}=\phi_3-\phi_2-\pi\sqrt{-1}\log(p)^{-1}.
\]
Hence Corollary \ref{explicitontensorCOR}{\bf (b)} gives
\begin{equation*}
\begin{split}
\widetilde{M}^{\pi_{\mathcal{B},D}}(s_1)(v_1\otimes v_3\otimes v_2)&=\widetilde{M}^{\pi_{\mathcal{B},D}}(s_1)\mathbf{v}_\beta\\
&=A^{\phi_3-\phi_2-\pi\sqrt{-1}\log(p)^{-1}}(z_1-z_2)\mathbf{v}_\beta-B^{\phi_3-\phi_2-\pi\sqrt{-1}\log(p)^{-1}}(z_1-z_2)\mathbf{v}_{s_2\beta}\\
&=v_1\otimes \mathcal{R}(z_1-z_2;\underline{\phi}+\underline{\Psi}^{(1)}(\kappa))\bigl(v_3\otimes v_2\bigr),
\end{split}
\end{equation*}
which proves the first equality of \eqref{toprove1} when applied to $v_1\otimes v_3\otimes v_2$.

As a second example, we consider the validity of the first equality of \eqref{toprove1} when applied to $v_2\otimes v_3\otimes v_2=\mathbf{v}_\alpha\in (V^{\otimes 3})_{(0,2,1)}$,
where $\alpha:=(2,3,2)\in\mathcal{K}_3[(0,2,1)]$. This time we have $I^{(0,2,1)}=\{2\}$, $\epsilon^{(0,2,1)}=\{+\}$,
\begin{equation*}
w_\alpha=
\left(\begin{matrix} 1 & 2 & 3\\ 2 & 1 & 3\end{matrix}\right)
\end{equation*}
and 
\begin{equation*}
\begin{split}
\gamma^{(0,2,1)}&=(\eta_3^{(0,2,1)}+\phi_3+2\kappa,\eta_2^{(0,2,1)}+\phi_2-2\kappa,\eta_2^{(0,2,1)}+\phi_2-4\kappa)\\
&=(-2\pi\sqrt{-1}\log(p)^{-1}+\phi_3, -\pi\sqrt{-1}\log(p)^{-1}+\phi_2+\kappa,-\pi\sqrt{-1}\log(p)^{-1}+\phi_2-\kappa).
\end{split}
\end{equation*}
Hence
\[
\gamma_{w_\alpha^{-1}(2)}^{(0,2,1)}-\gamma_{w_\alpha^{-1}(3)}^{(0,2,1)}=\gamma_1^{(0,2,1)}-\gamma_3^{(0,2,1)}=\phi_3-\phi_2-\pi\sqrt{-1}\log(p)^{-1}+\kappa.
\]
Therefore, Corollary \ref{explicitontensorCOR}{\bf (b)} gives
\begin{equation*}
\begin{split}
\widetilde{M}^{\pi_{\mathcal{B},D}}(s_1)(v_2\otimes v_3\otimes &v_2)=\widetilde{M}^{\pi_{\mathcal{B},D}}(s_1)\mathbf{v}_\alpha\\
=&A^{\phi_3-\phi_2-\pi\sqrt{-1}\log(p)^{-1}+\kappa}(z_1-z_2)\mathbf{v}_\alpha-B^{\phi_3-\phi_2-\pi\sqrt{-1}\log(p)^{-1}+\kappa}(z_1-z_2)\mathbf{v}_{s_2\alpha}\\
=&v_2\otimes \mathcal{R}(z_1-z_2;\underline{\phi}+\underline{\Psi}^{(2)}(\kappa))\bigl(v_3\otimes v_2\bigr),
\end{split}
\end{equation*}
which proves the first equality of \eqref{toprove1} when applied to $v_2\otimes v_3\otimes v_2$. All other cases can be checked by a similar computation.
\end{proof}
%%%%%%%%%%%%%%%%%%%%%%%%%%%%%%%%%%%%%%

%%%%%%%%%%%%%%%%%%%%%%%%%%%%%%%%%%%%%%%%%%%%%%%%%%%
\begin{cor}\label{lemMAIN}
The linear operator $\mathcal{R}(x;\underline{\phi}): V^{\otimes 2}\rightarrow V^{\otimes 2}$ satisfies 
\begin{equation}\label{dynqYBcomp}
\begin{split}
\mathcal{R}_{12}(x;\underline{\phi}+\widehat{\underline{\Psi}}(-\kappa)_3)&\mathcal{R}_{23}(x+y;\underline{\phi}+\widehat{\underline{\Psi}}(\kappa)_1)
\mathcal{R}_{12}(y;\underline{\phi}+\widehat{\underline{\Psi}}(-\kappa)_3)=\\
&=\mathcal{R}_{23}(y;\underline{\phi}+\widehat{\underline{\Psi}}(\kappa)_1)\mathcal{R}_{12}(x+y;\underline{\phi}+\widehat{\underline{\Psi}}(-\kappa)_3)
\mathcal{R}_{23}(x;\underline{\phi}+\widehat{\underline{\Psi}}(\kappa)_1)
\end{split}
\end{equation}
as linear operators on $V^{\otimes 3}$.
\end{cor}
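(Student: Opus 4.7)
The strategy is to derive \eqref{dynqYBcomp} from the $S_n$-cocycle identity for the modified monodromy $\widetilde{M}^{\pi_{\mathcal{B},D}}$ applied to the braid relation $s_1 s_2 s_1 = s_2 s_1 s_2$ in $S_3$, and then substitute the explicit formulas \eqref{toprove1}.

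First I would argue that the family $\{\widetilde{M}^{\pi_{\mathcal{B},D}}(w)\}_{w \in S_n}$ satisfies the braid-like cocycle identity \eqref{YBconnection}. By \eqref{reltildeM} it is obtained from $\bigoplus_{\mathbf{r}}\mathbb{M}^{\pi^{(\mathbf{r})}}$ through conjugation by the $\mathbf{z}$-independent linear isomorphism $\widetilde{T}$, and each principal series summand satisfies \eqref{YBconnection} by construction. Specialising to $n=3$ and $i=1$ produces the identity
\begin{equation*}
\begin{split}
\widetilde{M}^{\pi_{\mathcal{B},D}}(s_1)(\mathbf{z})\,&\widetilde{M}^{\pi_{\mathcal{B},D}}(s_2)(s_1\mathbf{z})\,\widetilde{M}^{\pi_{\mathcal{B},D}}(s_1)(s_2 s_1 \mathbf{z})=\\
&=\widetilde{M}^{\pi_{\mathcal{B},D}}(s_2)(\mathbf{z})\,\widetilde{M}^{\pi_{\mathcal{B},D}}(s_1)(s_2\mathbf{z})\,\widetilde{M}^{\pi_{\mathcal{B},D}}(s_2)(s_1 s_2\mathbf{z})
\end{split}
\end{equation*}
as linear endomorphisms of $V^{\otimes 3}$ depending meromorphically on $\mathbf{z}\in\mathbb{C}^3$.

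Second, I would substitute \eqref{toprove1} and introduce the variables $x:=z_1-z_2$ and $y:=z_2-z_3$. Computing $s_1\mathbf{z}=(z_2,z_1,z_3)$, $s_2\mathbf{z}=(z_1,z_3,z_2)$, $s_2 s_1 \mathbf{z}=(z_2,z_3,z_1)$ and $s_1 s_2\mathbf{z}=(z_3,z_1,z_2)$ allows one to read off the spectral parameter carried by each of the six $\mathcal{R}$-factors. The left hand side of the cocycle identity becomes $\mathcal{R}_{23}(x;\underline{\phi}+\widehat{\underline{\Psi}}(\kappa)_1)\mathcal{R}_{12}(x+y;\underline{\phi}+\widehat{\underline{\Psi}}(-\kappa)_3)\mathcal{R}_{23}(y;\underline{\phi}+\widehat{\underline{\Psi}}(\kappa)_1)$, while the right hand side becomes $\mathcal{R}_{12}(y;\underline{\phi}+\widehat{\underline{\Psi}}(-\kappa)_3)\mathcal{R}_{23}(x+y;\underline{\phi}+\widehat{\underline{\Psi}}(\kappa)_1)\mathcal{R}_{12}(x;\underline{\phi}+\widehat{\underline{\Psi}}(-\kappa)_3)$. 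The crucial point here is that the dynamical shifts $\widehat{\underline{\Psi}}(\pm\kappa)_i$ are $\mathbf{z}$-independent, so the cocycle action only permutes spectral parameters and leaves the shifts intact.

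Finally, since $x$ and $y$ are independent complex parameters, I can interchange $x\leftrightarrow y$ throughout the resulting identity. Because $x+y$ is symmetric in $x,y$, this swap exchanges the two sides and yields precisely \eqref{dynqYBcomp}. The only real obstacle is the bookkeeping of spectral parameters under the $S_3$-action on $\mathbf{z}$; once one has checked the four permutation formulas above, everything else follows mechanically.
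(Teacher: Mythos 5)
Your proposal is correct and follows essentially the same route as the paper: the paper's proof is exactly the observation that \eqref{dynqYBcomp} follows from the cocycle property of the modified monodromy cocycle $\{\widetilde{\mathbb{M}}^{\pi_{\mathcal{B},D}}(u)\}_{u\in S_3}$ applied to the braid relation $s_1s_2s_1=s_2s_1s_2$ together with the explicit formulas \eqref{toprove1}, which you simply spell out. Your bookkeeping of the spectral parameters under the $S_3$-action and the final relabelling $x\leftrightarrow y$ (with $x=z_1-z_2$, $y=z_2-z_3$) is accurate, so the argument is complete.
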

%%%%%%%%%%%%%%%%%%%%%%%%%%%%%%%%%%%%%%%%%%%%%%%%%%
\begin{proof}
The braid type relation \eqref{dynqYBcomp} is a direct consequence of \eqref{toprove1} in view of the cocycle property 
of the modified monodromy cocycle $\{\widetilde{\mathbb{M}}^{\pi_{\mathcal{B},D}}(u)\}_{u\in S_n}$
(cf. \eqref{YBconnection} for the unmodified monodromy cocycle).
\end{proof}
%%%%%%%%%%%%%%%%%%%%%%%%%%%%%%%%%%%%%%%%%%%%%%%%%%
We are now ready to obtain the proof of Theorem \ref{mainTHMfirst}. It suffices to show that $\mathcal{R}(x;\underline{\phi})$ is satisfying the 
dynamical quantum Yang-Baxter
equation \eqref{dynqYBfirst} in braid-like form. We derive it as consequence of \eqref{dynqYBcomp}.

First of all, replacing $\underline{\phi}$ in \eqref{dynqYBcomp} by
$\underline{\phi}+(0,0,\pi\sqrt{-1}\log(p)^{-1})$ we conclude that 
\begin{equation}\label{dynqYBPhi}
\begin{split}
\mathcal{R}_{12}(x;\underline{\phi}+\widehat{\underline{\Phi}}(-\kappa)_3)&\mathcal{R}_{23}(x+y;\underline{\phi}+\widehat{\underline{\Phi}}(\kappa)_1)
\mathcal{R}_{12}(y;\underline{\phi}+\widehat{\underline{\Phi}}(-\kappa)_3)=\\
&=\mathcal{R}_{23}(y;\underline{\phi}+\widehat{\underline{\Phi}}(\kappa)_1)\mathcal{R}_{12}(x+y;\underline{\phi}+\widehat{\underline{\Phi}}(-\kappa)_3)
\mathcal{R}_{23}(x;\underline{\phi}+\widehat{\underline{\Phi}}(\kappa)_1)
\end{split}
\end{equation}
with respect to the shift vectors 
\begin{equation} \label{psi2}
\underline{\Phi}^{(j)}(\alpha):=
\begin{cases}
(-\alpha,0,0)\quad &\hbox{ if }\,\, j=1,\\
(0,-\alpha,0)\quad &\hbox{ if }\,\, j=2,\\
(0,0,\alpha+\pi\sqrt{-1}\log(p)^{-1})\quad &\hbox{ if }\,\, j=3.
\end{cases}
\end{equation}
Now note that the 
dynamical quantum Yang-Baxter equation \eqref{dynqYBfirst} is equivalent to the equation
\begin{equation}\label{dynqYBfirstXi}
\begin{split}
\mathcal{R}_{12}(x;\underline{\phi}+\widehat{\underline{\Xi}}(-\kappa)_3)&\mathcal{R}_{23}(x+y;\underline{\phi}+\widehat{\underline{\Xi}}(\kappa)_1)
\mathcal{R}_{12}(y;\underline{\phi}+\widehat{\underline{\Xi}}(-\kappa)_3)=\\
&=\mathcal{R}_{23}(y;\underline{\phi}+\widehat{\underline{\Xi}}(\kappa)_1)\mathcal{R}_{12}(x+y;\underline{\phi}+\widehat{\underline{\Xi}}(-\kappa)_3)
\mathcal{R}_{23}(x;\underline{\phi}+\widehat{\underline{\Xi}}(\kappa)_1)
\end{split}
\end{equation}
with shift vectors
\begin{equation*} 
\underline{\Xi}^{(j)}(\alpha):=
\begin{cases}
(-\alpha,0,0)\quad &\hbox{ if }\,\, j=1,\\
(0,-\alpha,0)\quad &\hbox{ if }\,\, j=2,\\
(0,0,\alpha)\quad &\hbox{ if }\,\, j=3.
\end{cases}
\end{equation*}
So it remains to show that the $\pi\sqrt{-1}\log(p)^{-1}$ term in $\Phi_3^{(3)}(\alpha)$ may be omitted in the equation \eqref{dynqYBPhi}. Acting by both sides of \eqref{dynqYBPhi} on a pure tensor $v_i\otimes v_j\otimes v_k$, the resulting equation involves the shift $\Phi_3^{(3)}(\alpha)$ only if two of the indices $i,j,k$ are equal  to $3$.
In case $(i,j,k)\in\{(1,3,3), (3,1,3), (3,3,1)\}$ the dependence on the dynamical parameters is a dependence on
\[
(\phi_1+\Phi_1^{(3)}(\pm\kappa))-(\phi_3+\Phi_3^{(3)}(\pm\kappa))=\phi_1-\phi_3\mp\kappa-\pi\sqrt{-1}\log(p)^{-1}.
\]
Thus replacing $\phi_3$ by $\phi_3-\pi\sqrt{-1}\log(p)^{-1}$, it follows that the equation is equivalent to the equation with $\Phi_3^{(3)}(\alpha)$ omitted.
A similar argument applies to the case $(i,j,k)\in\{(2,3,3), (3,2,3), (3,3,2)\}$. This proves \eqref{dynqYBfirstXi} and thus completes the proof of Theorem \ref{mainTHMfirst}.

%%%%%%%%%%%%%%%%%%%%%%%%%%%%%%%%%%%%%%%%%%%%%%%%%
\section{Appendix}\label{App}
%%%%%%%%%%%%%%%%%%%%%%%%%%%%%%%%%%%%%%%%%%%%%%%%%%
The computation of the connection matrices of quantum affine KZ equations associated to principal series modules in \cite[\S 3]{S1} 
only deal with principal series modules $M^{I,\epsilon}(\gamma)$ with $\epsilon_i=+$ for all $i$. We describe here the extension of the results in \cite[\S 3]{S1} to 
include the case of signs $\epsilon_i$ ($i\in I$) such that $\epsilon_i=\epsilon_j$ if $s_i$ and $s_j$ are in the same conjugacy class of $S_{n,I}$. 
Following \cite{S1} we will discuss it in the general context of arbitrary root data.  

We forget for the moment the notations and conventions from the previous sections and freely use the notations from \cite[\S 3.1]{S1}.
In case of $\textup{GL}(n)$ initial data, these notations slightly differ from the notations of the previous sections (for instance, our present parameter $p$ corresponds to $q$ in \cite{S1}).  At the end of the appendix we will explicitly translate the results in this appendix to the setting and conventions of this paper.

Fix a choice of initial data $(R_0,\Delta_0,\bullet, \Lambda,\widetilde{\Lambda})$ (see \cite[\S 3.1]{S1} for more details)
and a subset $I\subseteq\{1,\ldots,n\}$. Write $W_0$ for the finite Weyl group associated
to $R_0$ and $W_{0,I}\subseteq W_0$ for the parabolic subgroup generated by $s_i$ ($i\in I$).
Fix a $\#I$-tuple $\epsilon=(\epsilon_i)_{i\in I}$
of signs  such that $\epsilon_i=\epsilon_j$ if $s_i$ and $s_j$ are conjugate in $W_{0,I}$. We write
\[
E_{\mathbb{C}}^{I,\epsilon}:=\{\gamma\in E_{\mathbb{C}} \,\, | \,\, (\widetilde{\alpha}_i,\gamma)=\epsilon_i(\widetilde{\kappa}_{\widetilde{\alpha}_i}+
\widetilde{\kappa}_{2\widetilde{\alpha}_i})\quad \forall\, i\in I \, \}
\]
with $E_{\mathbb{C}}$ the complexification of the ambient Euclidean space $E$ of the root system $R_0$.
The definition \cite[Def. 3.3]{S1} of the principal series module of the (extended) affine Hecke algebra $H_n(\kappa)$ 
now generalises as follows,
\[
M^{I,\epsilon}(\gamma):=\textup{Ind}_{H_I(\kappa)}^{H(\kappa)}\bigl(\mathbb{C}_{\chi^{I,\epsilon}_{\gamma}}\bigr),\qquad \gamma\in E_{\mathbb{C}}^{I,\epsilon},
\]
with $\chi^{I,\epsilon}_{\gamma}: H_I(\kappa)\rightarrow\mathbb{C}$ being the linear character defined by
\begin{equation*}
\begin{split}
\chi^{I,\epsilon}_\gamma(T_i)&:=\epsilon_iq^{-\epsilon_i\kappa_i},\qquad\,\,\, i\in I,\\
\chi^{I,\epsilon}_\gamma(Y^\nu)&:=q^{-(\nu,\gamma)},\qquad\quad \nu\in\widetilde{\Lambda}.
\end{split}
\end{equation*}
We write $M(\gamma)$ for $M^{I,\epsilon}(\gamma)$ when $I=\emptyset$.

We now generalise the two natural bases of the principal series modules. Fix generic $\gamma\in E_{\mathbb{C}}^{I,\epsilon}$. 
For $w\in W_0$ set
\[
v_w^{I,\epsilon}(\gamma):=T_w\otimes_{H_I(\kappa)}\mathbb{C}_{\chi_\gamma^{I,\epsilon}}\in M^{I,\epsilon}(\gamma).
\]
Note that $v_w^{I,\epsilon}(\gamma)=\chi_v^{I,\epsilon}(T_v)v_u^{I,\epsilon}(\gamma)$ if $w=uv$ with $u\in W_0^I$ and $v\in W_{0,I}$.
We write $v_w(\gamma)$ for $v_w^{I,\epsilon}(\gamma)$ if $I=\emptyset$.
Let $\phi_\gamma^{I,\epsilon}: M(\gamma)\twoheadrightarrow M^{I,\epsilon}(\gamma)$ be the canonical intertwiner mapping $v_w(\gamma)$
to $v_w^{I,\epsilon}(\gamma)$ for $w\in W_0$. Then \cite[Prop. 3.4]{S1} is valid for $M^{I,\epsilon}(\gamma)$, with the unnormalised elements 
$b_w^{unn,I}(\gamma)$ replaced by
\[
b_w^{unn,I,\epsilon}(\gamma):=\phi_\gamma^{I,\epsilon}\bigl(A_w^{unn}(\gamma)v_e(w^{-1}\gamma)\bigr),\qquad w\in W_0.
\]
Indeed, as in the proof of \cite[Prop. 3.4]{S1}, one can show by a direct computation that
\[
\phi_\gamma^{I,\epsilon}\bigl(A_{s_i}^{unn}(\gamma)v_\tau(s_i\gamma)\bigr)=0,\qquad \,\forall\, \tau\in W_0
\]
if $i\in I$ and $\epsilon_i\in\{\pm\}$ (despite the fact that the term $D_{\widetilde{\alpha}_i}(\gamma)$ appearing in the
proof of \cite[Prop. 3.4]{S1} is no longer zero when $i\in I$ and $\epsilon_i=-$). Now in the same way as in \cite[\S 3.2]{S1}, the normalised
basis $\{b_{\sigma^{-1}}^{I,\epsilon}(\gamma)\}_{\sigma\in W_0^I}$ of $M^{I,\epsilon}(\gamma)$ can be defined by
\[
b_{\sigma^{-1}}^{I,\epsilon}(\gamma):=D_{\sigma^{-1}}(\gamma)^{-1}b_{\sigma^{-1}}^{unn,I,\epsilon}(\gamma),\qquad \sigma\in W_0^I,
\]
see \cite[Cor 3.6]{S1}.

Following \cite[\S 3.4]{S1} we write, for a finite dimensional affine Hecke algebra module $L$, 
$\nabla^L$ for the action of the extended affine Weyl group $W$ on the space of $L$-valued meromorphic functions on $E_{\mathbb{C}}$
given by
\[
\bigl(\nabla^L(w)f\bigr)(\mathbf{z})=C_w^L(\mathbf{z})f(w^{-1}\mathbf{z}),\qquad w\in W
\]
for the explicit $W$-cocycle $\{C_w^L\}_{w\in W}$ as given by \cite[Thm. 3.7]{S1}. Cherednik's \cite{C} quantum affine KZ equations then read
\[
\nabla^L(\tau(\lambda))f=f\qquad \forall \lambda\in\widetilde{\Lambda}, 
\]
see \cite[(3.7)]{S1} in the present notations. In the limit $\Re\bigl((\alpha,\mathbf{z})\bigr)\rightarrow-\infty$ for all $\alpha\in R_0^+$,
the transport operators $C_{\tau(\lambda)}^L(\mathbf{z})$ 
tend to $\pi(\widetilde{Y}^\lambda)$ for $\lambda\in\widetilde{\Lambda}$, where $\pi$ is the representation map of $L$ and
\[
\widetilde{Y}^\lambda:=q^{-(\rho,\lambda)}T_{w_0}Y^{w_0\lambda}T_{w_0}^{-1}
\]
with $w_0\in W_0$ the longest Weyl group element.

An $F$-basis of solutions of the quantum affine KZ equations  
\[
\bigl(\nabla^{M^{I,\epsilon}(\gamma)}(\tau(\lambda))f\bigr)(\mathbf{z})
f(\mathbf{z}),\qquad \forall\,\lambda\in\widetilde{\Lambda}
\]
for $M^{I,\epsilon}(\gamma)$-valued meromorphic functions $f(\mathbf{z})$ in $\mathbf{z}\in E_{\mathbb{C}}$ (see \cite[Def. 3.8]{S1}) is given by
\[
\Phi_{\sigma^{-1}}^{I,\epsilon}(\mathbf{z},\gamma):=\phi_\gamma^{I,\epsilon}\bigl(A_{\sigma^{-1}}(\gamma)\phi_{\sigma\gamma}^{\mathcal{V}}(
\Phi(\mathbf{z},\sigma\gamma))\bigr)\qquad \sigma\in W_0^I
\]
for generic $\gamma\in E_{\mathbb{C}}^{I,\epsilon}$,
where we freely used the notations from \cite[\S 3]{S1} (in particular, $\phi_{\sigma\gamma}^{\mathcal{V}}$ is the linear isomorphism from
$\mathcal{V}=\bigoplus_{w\in W_0}\mathbb{C}v_w$ onto $M(\sigma\gamma)$  mapping $v_w$ to $v_w(\sigma\gamma)$ for $w\in W_0$,
and $\Phi(\mathbf{z},\gamma)$ is the asymptotically free solution of the bispectral quantum KZ equations, defined in \cite[Thm. 3.10]{S1}).
The characterising asymptotic behaviour of $\Phi_{\sigma^{-1}}^{I,\epsilon}(\mathbf{z},\gamma)$ ($\sigma\in W_0^I$)  is
\begin{equation}\label{asympgeneral}
\Phi_{\sigma^{-1}}^{I,\epsilon}(\mathbf{z};\gamma)=q^{(w_0\rho-w_0\sigma\gamma,\mathbf{z})}\sum_{\alpha\in Q_+}
\Gamma_\sigma^{I,\epsilon,\gamma}(\alpha)q^{-(\alpha,\mathbf{z})}
\end{equation}
if $\Re\bigl((\alpha,\mathbf{z})\bigr)\ll 0$ for all $\alpha\in R_0^+$, with $Q_+=\mathbb{Z}_{\geq 0}R_0^+$ and with leading coefficient
\begin{equation}\label{lc}
\begin{split}
\widetilde{b}_\sigma:=&\Gamma_\sigma^{I,\epsilon,\gamma}(0)=
\textup{cst}_\sigma^\gamma\pi^{I,\epsilon}_\gamma(T_{w_0})b_{\sigma^{-1}}^{I,\epsilon}(\gamma),\\
\textup{cst}_\sigma^\gamma:=&
\frac{q^{(\widetilde{\rho},\rho-\sigma\gamma)}}{\widetilde{\mathcal{S}}(\sigma\gamma)}
\Bigl(\prod_{\alpha\in R_0^+}\bigl(q_\alpha^2q^{-2(\widetilde{\alpha},\sigma\gamma)};q_\alpha^2\bigr)_{\infty}\Bigr),
\end{split}
\end{equation}
where $\pi_\gamma^{I,\epsilon}$ is the representation map of $M^{I,\epsilon}(\gamma)$, see \cite[Prop. 3.13]{S1}. Note that
\[
\pi_{\gamma}^{I,\epsilon}(\widetilde{Y}^\lambda)\widetilde{b}_\sigma=q^{-(\lambda,\rho+w_0\sigma\gamma)}\widetilde{b}_\sigma
\qquad \forall\, \lambda\in\widetilde{\Lambda}.
\]

For generic $\gamma\in E_{\mathbb{C}}^{I,\epsilon}$, there exists unique $m_{\tau_1,\tau_2}^{I,\epsilon,\sigma}(\cdot,\gamma)\in F$
($\sigma\in W_0$, $\tau_1,\tau_2\in W_0^I$) such that
\begin{equation}\label{mdef}
\nabla^{M^{I,\epsilon}(\gamma)}(\sigma)\Phi_{\tau_2^{-1}}^{I,\epsilon}(\cdot,\gamma)=
\sum_{\tau_1\in W_0^I}m_{\tau_1,\tau_2}^{I,\epsilon,\sigma}(\cdot,\gamma)\Phi_{\tau_1^{-1}}^{I,\epsilon}(\cdot,\gamma)
\end{equation}
for all $\sigma\in W_0$ and $\tau_1\in W_0^I$. The connection matrices
\[
M^{I,\epsilon,\sigma}(\cdot,\gamma):=\bigl(m_{\tau_1,\tau_2}^{I,\epsilon,\sigma}(\cdot,\gamma)\bigr)_{\tau_1,\tau_2\in W_0^I},\qquad \sigma\in W_0
\]
satisfy the cocycle properties $M^{I,\epsilon,\sigma\sigma^\prime}(\mathbf{z},\gamma)=M^{I,\epsilon,\sigma}(\mathbf{z},\gamma)
M^{I,\epsilon,\sigma^\prime}(\sigma^{-1}\mathbf{z},\gamma)$ for $\sigma,\sigma^\prime\in W_0$, and 
$M^{I,\epsilon,e}(\mathbf{z},\gamma)=\textup{Id}$. Now \cite[Thm. 3.15]{S1} generalises as follows.

For $i\in \{1,\ldots,n\}$ we write $i^*\in\{1,\ldots,n\}$ for the index such that $\alpha_{i^*}=-w_0\alpha_i$, where $w_0\in W_0$ is the longest Weyl group element.
The {\it elliptic $c$-function} is defined by
\begin{equation}\label{cfunctiongeneral}
c_{\alpha}(x):=
\frac{\theta(a_\alpha q^{x}, b_\alpha q^{x}, c_{\alpha}q^{x}, d_{\alpha}q^{x};q_{\alpha}^2)}
{\theta(q^{2x};q_{\alpha}^2)}q^{\frac{1}{\mu_{\alpha}}(\kappa_{\alpha}+\kappa_{\alpha^{(1)}})x}
\end{equation}
for $\alpha\in R_0$, where $\{a_\alpha,b_\alpha,c_\alpha,d_\alpha\}$ are the Askey-Wilson parameters, see \cite[\S 3.1]{S1}. 

%%%%%%%%%%%%%%%%%%%%%%%%%%%%%%%%%%%%%%%%%%%%%%%%%%%%%%%%
\begin{thm}\label{mainthmgeneral}
Fix a generic $\gamma\in E_{\mathbb{C}}^{I,\epsilon}$ such that $q^{2(\widetilde{\beta},\gamma)}\not\in q_\beta^{2\mathbb{Z}}$ 
for all $\beta\in R_0$. Let $\tau_2\in W_0^I$ and $i\in\{1,\ldots,n\}$. If $s_{i^*}\tau_2\not\in W_0^I$ then
\[
m_{\tau_1,\tau_2}^{I,\epsilon,s_i}(\mathbf{z},\gamma)=\delta_{\tau_1,\tau_2}\epsilon_{i^*_{\tau_2}}\,\frac{c_{\alpha_i}((\alpha_i,\mathbf{z}))}
{c_{\alpha_i}(\epsilon_{i^*_{\tau_2}}(\alpha_i,\mathbf{z}))},\qquad \forall\, \tau_1\in W_0^I,
\]
with $i_{\tau_2}^*\in I$ such that $\alpha_{i_{\tau_2}^*}=\tau_2^{-1}(\alpha_{i^*})$. If $s_{i^*}\tau_2\in W_0^I$ then $m_{\tau_1,\tau_2}^{I,\epsilon,s_i}(\cdot,\gamma)\equiv 0$
if $\tau_1\not\in\{\tau_2,s_{i^*}\tau_2\}$ while
\begin{equation*}
\begin{split}
m_{\tau_2,\tau_2}^{I,\epsilon,s_i}(\mathbf{z},\gamma)&=\frac{\mathfrak{e}_{\alpha_i}((\alpha_i,\mathbf{z}),(\widetilde{\alpha}_{i^*},\tau_2\gamma))-
\widetilde{\mathfrak{e}}_{\alpha_i}((\widetilde{\alpha}_{i^*},\tau_2\gamma),(\alpha_i,\mathbf{z}))}{\widetilde{\mathfrak{e}}_{\alpha_i}((\widetilde{\alpha}_{i^*},\tau_2\gamma),
-(\alpha_i,\mathbf{z}))},\\
m_{s_{i^*}\tau_2,\tau_2}^{I,\epsilon,s_i}(\mathbf{z},\gamma)&=\frac{\mathfrak{e}_{\alpha_i}((\alpha_i,\mathbf{z}),-(\widetilde{\alpha}_{i^*},\tau_2\gamma))}
{\widetilde{\mathfrak{e}}_{\alpha_i}((\widetilde{\alpha}_{i^*},\tau_2\gamma),-(\alpha_i,\mathbf{z}))},
\end{split}
\end{equation*}
with the functions $\mathfrak{e}_\alpha(x,y)$ and $\widetilde{\mathfrak{e}}_\alpha(x,y)$ given by 
\begin{equation*}
\begin{split}
\mathfrak{e}_\alpha(x,y)&:=
q^{-\frac{1}{2\mu_\alpha}(\kappa_\alpha+\kappa_{2\alpha}-x)(\kappa_\alpha+\kappa_{\alpha^{(1)}}-y)}
\frac{\theta\bigl(\widetilde{a}_\alpha q^y,\widetilde{b}_\alpha q^y, \widetilde{c}_\alpha q^y, d_\alpha q^{y-x}/\widetilde{a}_\alpha;
q_\alpha^2\bigr)}{\theta\bigl(q^{2y},d_\alpha q^{-x};q_\alpha^2\bigr)},\\
\widetilde{\mathfrak{e}}_\alpha(x,y)&:=
q^{-\frac{1}{2\mu_\alpha}(\kappa_\alpha+\kappa_{\alpha^{(1)}}-x)(\kappa_\alpha+\kappa_{2\alpha}-y)}
\frac{\theta\bigl(a_\alpha q^y,b_\alpha q^y,c_\alpha q^y,\widetilde{d}_\alpha q^{y-x}/a_\alpha;
q_\alpha^2\bigr)}{\theta\bigl(q^{2y},\widetilde{d}_\alpha q^{-x};q_\alpha^2\bigr)}.
\end{split}
\end{equation*}
Here $\{\widetilde{a}_\alpha,\widetilde{b}_\alpha,\widetilde{c}_\alpha,\widetilde{d}_\alpha\}$ are the dual Askey-Wilson parameters, see \cite[\S 3.1]{S1}.
\end{thm}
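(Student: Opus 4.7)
The plan is to adapt the proof of \cite[Thm.~3.15]{S1} by passing through the canonical surjection $\phi_\gamma^{I,\epsilon}\colon M(\gamma)\twoheadrightarrow M^{I,\epsilon}(\gamma)$. Since $\phi_\gamma^{I,\epsilon}$ is a morphism of extended affine Hecke algebra modules, it intertwines the cocycle actions,
\[
\nabla^{M^{I,\epsilon}(\gamma)}(w)\circ\phi_\gamma^{I,\epsilon}=\phi_\gamma^{I,\epsilon}\circ\nabla^{M(\gamma)}(w)\qquad(w\in W),
\]
and by the very definition of $\Phi_{\sigma^{-1}}^{I,\epsilon}(\mathbf{z},\gamma)$ ($\sigma\in W_0^I$) in terms of $\Phi(\mathbf{z},\sigma\gamma)$ and $A_{\sigma^{-1}}(\gamma)$, these asymptotically free solutions are pushforwards under $\phi_\gamma^{I,\epsilon}$ of explicit linear combinations of the basic solutions on the full principal series $M(\gamma)$. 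The connection problem for $M^{I,\epsilon}(\gamma)$ is therefore controlled by the connection problem for $M(\gamma)$ (the $I=\emptyset$ case already handled in \cite{S1}), composed with the parabolic collapse $\phi_\gamma^{I,\epsilon}(v_{uv}(\gamma))=\chi_\gamma^{I,\epsilon}(T_v)v_u^{I,\epsilon}(\gamma)$ for $u\in W_0^I$, $v\in W_{0,I}$. All the $\epsilon$-dependence of the answer enters through this collapse step.

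Executing this, the case $s_{i^*}\tau_2\in W_0^I$ requires no absorption: the two lifted basis vectors $v_{\tau_2}(\gamma)$ and $v_{s_{i^*}\tau_2}(\gamma)$ remain distinct under pushforward, so the matrix entries $m^{I,\epsilon,s_i}_{\tau_2,\tau_2}$ and $m^{I,\epsilon,s_i}_{s_{i^*}\tau_2,\tau_2}$ inherit their expressions verbatim from the $\epsilon=+$ computation; the dependence on $\epsilon$ is carried purely by the constraint $\gamma\in E_{\mathbb{C}}^{I,\epsilon}$. In the case $s_{i^*}\tau_2\notin W_0^I$ one has $s_{i^*}\tau_2=\tau_2 s_{i^*_{\tau_2}}$ with $i^*_{\tau_2}\in I$, and the off-diagonal contribution from $M(\gamma)$ is absorbed into the diagonal via $\chi_\gamma^{I,\epsilon}(T_{i^*_{\tau_2}})=\epsilon_{i^*_{\tau_2}}q^{-\epsilon_{i^*_{\tau_2}}\kappa_{\alpha_{i^*_{\tau_2}}}}$. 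Setting $\epsilon=\epsilon_{i^*_{\tau_2}}$, $x=(\alpha_i,\mathbf{z})$ and $y=(\widetilde{\alpha}_{i^*},\tau_2\gamma)$, the resulting diagonal entry takes the form
\[
m^{I,\epsilon,s_i}_{\tau_2,\tau_2}(\mathbf{z},\gamma)=\frac{\mathfrak{e}_{\alpha_i}(x,y)-\widetilde{\mathfrak{e}}_{\alpha_i}(y,x)+\epsilon\, q^{-\epsilon\kappa_{\alpha_{i^*_{\tau_2}}}}\mathfrak{e}_{\alpha_i}(x,-y)}{\widetilde{\mathfrak{e}}_{\alpha_i}(y,-x)},
\]
evaluated at the specialisation $y=\epsilon(\widetilde{\kappa}_{\widetilde{\alpha}_{i^*_{\tau_2}}}+\widetilde{\kappa}_{2\widetilde{\alpha}_{i^*_{\tau_2}}})$ forced by $\gamma\in E_{\mathbb{C}}^{I,\epsilon}$, and must be shown to equal $\epsilon\, c_{\alpha_i}(x)/c_{\alpha_i}(\epsilon x)$.

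I expect the main obstacle to be this final theta function identity. For $\epsilon=+$ it is precisely the identity already used in the proof of \cite[Thm.~3.15]{S1}, since the specialisation sends $y$ to a pole of $1/\widetilde{\mathfrak{e}}_{\alpha_i}$ coming from the dual Askey-Wilson parameter $\widetilde{d}_{\alpha_i}$. For $\epsilon=-$ the specialisation hits a pole coming from one of $\widetilde{a}_{\alpha_i},\widetilde{b}_{\alpha_i},\widetilde{c}_{\alpha_i}$ instead, and the required identity should be derivable from the $\epsilon=+$ version by the involution on Askey-Wilson parameters corresponding to $\kappa_{\alpha_{i^*_{\tau_2}}}\mapsto-\kappa_{\alpha_{i^*_{\tau_2}}}$ (equivalently the swap $(\widetilde{a},\widetilde{b})\leftrightarrow(\widetilde{c},\widetilde{d})$ in the parabolic direction). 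Once this sign-flipped specialisation is verified by direct manipulation of Jacobi theta functions, using the product expansion \eqref{cfunctiongeneral} of $c_\alpha$ together with the defining formulas for $\mathfrak{e}_\alpha$ and $\widetilde{\mathfrak{e}}_\alpha$, the full statement of Theorem \ref{mainthmgeneral} reduces to the $\epsilon=+$ content of \cite[Thm.~3.15]{S1}, and the cocycle property of $\{M^{I,\epsilon,\sigma}\}_{\sigma\in W_0}$ propagates the formulas from the simple reflections $s_i$ to the remaining cases.
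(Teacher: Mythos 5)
Your reduction stands or falls with the ``absorption'' step, and that step is the gap. The collapse rule $\phi_\gamma^{I,\epsilon}(v_{uv}(\gamma))=\chi_\gamma^{I,\epsilon}(T_v)\,v_u^{I,\epsilon}(\gamma)$ is a statement about the standard basis of the module; it does not transfer to the asymptotically free solutions, which are built through the intertwiners $A_w(\gamma)$, and the correct statement there is the opposite of absorption: the generalisation of \cite[Prop.\ 3.4]{S1} (this is the one genuinely new representation-theoretic input) gives $\phi_\gamma^{I,\epsilon}\circ A_{s_j}^{unn}(\gamma)=0$ for $j\in I$ \emph{for both signs} $\epsilon_j$, so the would-be second solution contributes nothing and the collapsed diagonal entry is exactly $\bigl(\mathfrak{e}_{\alpha_i}(x,y)-\widetilde{\mathfrak{e}}_{\alpha_i}(y,x)\bigr)/\widetilde{\mathfrak{e}}_{\alpha_i}(y,-x)$, with no extra term. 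Your formula with the added numerator term $\epsilon\,q^{-\epsilon\kappa_{\alpha_{i^*_{\tau_2}}}}\mathfrak{e}_{\alpha_i}(x,-y)$ is in fact incompatible with the theorem when $\epsilon_{i^*_{\tau_2}}=-$: at the forced specialisation $y=-(\widetilde{\kappa}_{\widetilde{\alpha}_i}+\widetilde{\kappa}_{2\widetilde{\alpha}_i})$ one has $\mathfrak{e}_{\alpha_i}(x,y)=0$ (a zero of one of the dual Askey--Wilson theta factors), and already
\[
\frac{\mathfrak{e}_{\alpha_i}(x,y)-\widetilde{\mathfrak{e}}_{\alpha_i}(y,x)}{\widetilde{\mathfrak{e}}_{\alpha_i}(y,-x)}
=-\frac{\widetilde{\mathfrak{e}}_{\alpha_i}(y,x)}{\widetilde{\mathfrak{e}}_{\alpha_i}(y,-x)}
=-\frac{c_{\alpha_i}(x)}{c_{\alpha_i}(-x)},
\]
which is the asserted answer; your additional term equals $-q^{\kappa_{\alpha_{i^*_{\tau_2}}}}\mathfrak{e}_{\alpha_i}\bigl(x,\widetilde{\kappa}_{\widetilde{\alpha}_i}+\widetilde{\kappa}_{2\widetilde{\alpha}_i}\bigr)$, which is generically nonzero (it vanishes only in the $\epsilon=+$ case, where $-y=-(\widetilde{\kappa}+\widetilde{\kappa}_2)$). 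So the theta identity you propose to prove at the end is false as stated, and no involution of Askey--Wilson parameters (which would amount to changing the fixed multiplicity function, not the module) will produce it. Note also that the actual $\epsilon=-$ mechanism is a \emph{zero} of $\mathfrak{e}$, not a pole of $1/\widetilde{\mathfrak{e}}$, and it makes the $\epsilon=-$ case elementary; only $\epsilon=+$ needs the nontrivial identity of \cite[Thm.\ 3.15]{S1}.

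A secondary, but real, difficulty with routing everything through $M(\gamma)$: the points $\gamma\in E_{\mathbb{C}}^{I,\epsilon}$ are precisely points of reducibility of the full principal series (that is why the quotient $M^{I,\epsilon}(\gamma)$ exists), so you cannot simply specialise the $I=\emptyset$ connection matrices and push them forward without re-examining which solutions survive the quotient and with what normalisation; the paper avoids this by redoing the asymptotic analysis directly for $M^{I,\epsilon}(\gamma)$ (generalising \cite[Prop.\ 3.4]{S1} and then \cite[Thm.\ 3.15]{S1}), where the only subtlety for $\epsilon_j=-$ is that the scalar $D_{\widetilde{\alpha}_j}(\gamma)$ is no longer zero, yet the composite $\phi_\gamma^{I,\epsilon}\circ A_{s_j}^{unn}(\gamma)$ still vanishes. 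Your treatment of the case $s_{i^*}\tau_2\in W_0^I$ (entries inherited verbatim, $\epsilon$ entering only through the constraint on $\gamma$) is consistent with the paper, but the case $s_{i^*}\tau_2\notin W_0^I$ needs to be repaired along the lines above before the cocycle argument can finish the proof.
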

\begin{proof}
Repeating the proof of \cite[Thm. 3.15]{S1} in the present generalised setup we directly obtain the result for $\tau_2\in W_0^I$ satisfying $s_{i^*}\tau_2\in W_0^I$. If $s_{i^*}\tau_2\not\in W_0^I$
then the proof leads to the expression
\[
m_{\tau_1,\tau_2}^{I,\epsilon,s_i}(\mathbf{z},\gamma)=\delta_{\tau_1,\tau_2}n_{\tau_2,\tau_2}^{s_i}(\mathbf{z},\gamma),\qquad \tau_1\in W_0^I
\]
with
\[
n_{\tau_2,\tau_2}^{s_i}(\mathbf{z},\gamma)=\frac{\mathfrak{e}_{\alpha_i}((\alpha_i,\mathbf{z}),(\widetilde{\alpha}_{i^*_{\tau_2}},\gamma))-
\widetilde{\mathfrak{e}}_{\alpha_i}((\widetilde{\alpha}_{i^*_{\tau_2}},\gamma),(\alpha_i,\mathbf{z}))}
{\widetilde{\mathfrak{e}}_{\alpha_i}((\widetilde{\alpha}_{i^*_{\tau_2}},\gamma),-(\alpha_i,\mathbf{z}))}.
\]
So it suffices to show that
\begin{equation*}
n_{\tau_2,\tau_2}^{s_i}(\mathbf{z},\gamma)=
\begin{cases}
1\qquad &\hbox{ if }\epsilon_{i^*_{\tau_2}}=+,\\
-\frac{c_{\alpha_i}((\alpha_i,\mathbf{z}))}
{c_{\alpha_i}(-(\alpha_i,\mathbf{z}))}\qquad &\hbox{ if }\epsilon_{i^*_{\tau_2}}=-
\end{cases}
\end{equation*}
if $s_{i^*}\tau_2\not\in W_0^I$. The case $\epsilon_{i^*_{\tau_2}}=+$ is proved in \cite[Thm. 3.15]{S1} by applying a nontrivial theta-function identity.
If $\epsilon_{i^*_{\tau_2}}=-$ then 
\begin{equation*}
\mathfrak{e}_{\alpha_i}((\alpha_i,\mathbf{z}),(\widetilde{\alpha}_{i^*_{\tau_2}},\gamma))=
\mathfrak{e}_{\alpha_i}((\alpha_i,\mathbf{z}),-\widetilde{\kappa}_{\widetilde{\alpha}_i}-\widetilde{\kappa}_{2\widetilde{\alpha}_i})=0,
\end{equation*}
hence
\begin{equation*}
\begin{split}
n_{\tau_2,\tau_2}^{s_i}(\mathbf{z},\gamma)&=-\frac{\widetilde{\mathfrak{e}}_{\alpha_i}((\widetilde{\alpha}_{i^*_{\tau_2}},\gamma),(\alpha_i,\mathbf{z}))}
{\widetilde{\mathfrak{e}}_{\alpha_i}((\widetilde{\alpha}_{i^*_{\tau_2}},\gamma),-(\alpha_i,\mathbf{z}))}\\
&=-\frac{\widetilde{\mathfrak{e}}_{\alpha_i}(-\widetilde{\kappa}_{\widetilde{\alpha}_i}-\widetilde{\kappa}_{2\widetilde{\alpha}_i},(\alpha_i,\mathbf{z}))}
{\widetilde{\mathfrak{e}}_{\alpha_i}(-\widetilde{\kappa}_{\widetilde{\alpha}_i}-\widetilde{\kappa}_{2\widetilde{\alpha}_i},-(\alpha_i,\mathbf{z}))}=
-\frac{c_{\alpha_i}((\alpha_i,\mathbf{z}))}
{c_{\alpha_i}(-(\alpha_i,\mathbf{z}))},
\end{split}
\end{equation*}
where the last equality follows by a direct computation.
\end{proof}
%%%%%%%%%%%%%%%%%%%%%%%%%%%%%%%%%%%%%%%%%%%%%%%%%%%%%%%%
In this paper we have used this general result in the special case of the $\textup{GL}(n)$ initial data $(R_0,\Delta_0,\bullet,\Lambda,\widetilde{\Lambda})$, with
root system $R_0=\{e_i-e_j\}_{1\leq i\not=j\leq n}\subset\mathbb{R}^n=:E$ of type $A_{n-1}$ (here $\{e_i\}_{i=1}^n$ denotes 
the standard orthonormal basis of $\mathbb{R}^n$),
with $\Delta=\{\alpha_1,\ldots,\alpha_{n-1}\}=\{e_1-e_2,\ldots, e_{n-1}-e_n\}$, with
$\bullet=u$ (hence $\mu_\alpha=1$ and $\widetilde{\alpha}=\alpha$ for all $\alpha\in R_0$), 
and with lattices $\Lambda=\mathbb{Z}^n=\widetilde{\Lambda}$. In this case $i^*=n-i$ for $i\in\{1,\ldots,n-1\}$ and the multiplicity function $\kappa$ is constant and equal to
the dual multiplicity function $\widetilde{\kappa}$. The corresponding Askey-Wilson parameters, which coincide in this case with the dual Askey-Wilson parameters, are 
independent of $\alpha\in R_0$ and are given by
\[
\{a,b,c,d\}=\{q^{2\kappa},-1,q^{1+2\kappa},-q\}.
\]
Then the elliptic $c$-function \eqref{cfunctiongeneral} reduces to 
\[
c_\alpha(x)=q^{2\kappa x}\frac{\theta(q^{2\kappa+x};q)}{\theta(q^x;q)}
\]
for $\alpha\in R_0$, and 
\begin{equation*}
\begin{split}
m_{\tau_2,\tau_2}^{I,\epsilon,s_i}(\mathbf{z},\gamma)&=\frac{\theta(q^{2\kappa},q^{(\alpha_{i^*},\tau_2\gamma)-
(\alpha_i,\mathbf{z})};q)}{\theta(q^{(\alpha_{n-i},\tau_2\gamma)},q^{2\kappa-(\alpha_i,\mathbf{z})};q)}
q^{(2\kappa-(\widetilde{\alpha}_{n-i},\tau_2\gamma))(\alpha_i,\mathbf{z})},\\
m_{s_{n-i}\tau_2,\tau_2}^{I,\epsilon,s_i}(\mathbf{z},\gamma)&=\frac{\theta(q^{2\kappa-(\alpha_{n-i},\tau_2\gamma)},
q^{-(\alpha_i,\mathbf{z})};q)}{\theta(q^{2\kappa-(\alpha_i,\mathbf{z})},q^{-(\alpha_{n-i},\tau_2\gamma)};q)}
q^{2\kappa((\alpha_i,\mathbf{z})-(\alpha_{n-i},\tau_2\gamma))}
\end{split}
\end{equation*}
if $\tau_2\in W_0^I$ and $s_{n-i}\tau_2\in W_0^I$ by \cite[(1.9) \& Prop. 1.7]{S0}.

The precise connection to the notation in the main text is given as follows: if $q$ is replaced by $p$ in the above
formulas, then the principal series modules coincide with the ones as defined in Subsection \ref{PSRsection} and the connection matrices $M^{I,\epsilon,w}(\mathbf{z},\gamma)$ become the matrices $\mathbb{M}^{I,\epsilon,w}(\mathbf{z},\gamma)$ as defined by \eqref{Cexpl}.

%%%%%%%%%%%%%%%%%%%%%%%%%%%%%%%%%%%%%%%%%%%%%%%%

\end{document}